\newcommand{\arhan}{Arhangel'ski\u{\i}\xspace}
\newcommand{\frech}{Fr\'echet-Urysohn\xspace}
\newcommand{\groth}{Gro\-then\-dieck\xspace}
\newcommand{\lind}{Lindel\"of\xspace}
\newcommand{\strl}{\text{Str}(L)}
\newcommand{\modt}{\text{Mod}(T)}
\newcommand{\udlc}{\mathrm{DULC}}
\newcommand\blfootnote[1]{%
  \begingroup
  \renewcommand\thefootnote{}\footnote{#1}%
  \addtocounter{footnote}{-1}%
  \endgroup
}
\renewcommand*{\thefootnote}{\fnsymbol{footnote}}
\title{On the Undefinability of Pathological Banach Spaces}
\author{Clovis Hamel${}^{1,2}$ \ and Franklin D. Tall${}^{2}$}
\date{\today}
\begin{document}
\maketitle
\blfootnote{${}^1$ The first author was an NSERC Vanier Scholar when most of this research was carried out.} \blfootnote{${}^2$ Research supported by NSERC Grants RGPIN-2016-06319 and RGPIN-2023-03420.}
\blfootnote{  {\it 2020 Mathematics Subject Classification}.
    03C45, 03C66, 03C98, 46B99, 54A20, 54H99.}
 \blfootnote{ {\it Key words and phrases}. Tsirelson's space, Gowers' problem, explicitly definable Banach spaces, $C_p$-theory, model-theoretic stability, definability, double limit conditions, weakly Grothendieck spaces.}

\begin{abstract}
    Motivated by Tsirelson's implicitly defined pathological Banach space, T. Gowers asked whether explicitly defined  Banach spaces must include either $c_0$ or some $\ell^p$. J. Iovino and P. Casazza gave an affirmative answer for first-order continuous logic. We greatly extend their work to logics with much weaker requirements than compactness on their type spaces. Noteworthy is our extensive use of the topology of function spaces ($C_p$-theory) as developed by Arhangel'skii, and our use of double limit conditions studied by H. König and N. Kuhn.
\end{abstract}
\section{Introduction}
Classical Banach spaces all include either a copy of $c_0$ or a copy of some $\ell^p$. Call such a Banach space ``\emph{nice}''. The negation of ``nice'' is \emph{pathological}. Tsirelson \cite{Tsirelson1974} constructed by a novel method a pathological Banach space. Gowers \cite{Gowers1995} asked whether all definable infinite dimensional Banach spaces are nice. The question of course demands a definition of ``definable''. Logicians know what that means; it requires specifying a logic---its language, vocabulary, and, in general, its syntax. It also requires specifying its semantics---what is the set of objects to be quantified over, and which formulas are true. Assuming these have been done, an object is \emph{definable} if there is a formula defining it, i.e., $B = \{x : \phi(x)\}$.

The first attempt at answering Gowers' question was due to P.~Casazza and J.~N.~Iovino \cite{Casazza}, for a class of logics with enough expressive power to talk about Banach spaces. These logics were somewhat unwieldy, but were basically \emph{real-valued continuous logic}, where truth values are taken in $[0, 1]$ and there is enough expressive power to do elementary analysis. The usual $2$-valued logic is a special case of this, so their results apply to that as well. They made essential use of the compactness of their logic, i.e., if every finite subset of a set of formulas is satisfiable, then the whole set is satisfiable\footnote{\cite{Casazza} claims to just use the \emph{countable} compactness of their logics but there is a gap in their argument. In \cite{CDI}, the authors only claim their result for compact logics. We will prove the countably compact case in Section 6.}. Augmented by E.~Due\~nez, they have now developed a more elegant version \cite{CDI}. Both versions prove the undefinability of Tsirelson's space, but beg the question of whether there might be some other reasonable logic in which it could be defined. We recognized the essential topological nature of some of the ideas of \cite{Casazza} and have thus been able to vastly increase the scope of such undefinability results in \cite{Hamel} and \cite{Hamelb}. The first, \cite{Hamel}, deals with logics that have \lind first countable \emph{type spaces} (defined below). The second, \cite{Hamelb}, generalizes to logics that have \lind $\Sigma$ countably tight (defined below) type spaces. Here we generalize the previous four papers and require only a common generalization of compactness and countable tightness.

The papers \cite{Hamel} and \cite{Hamelb} are rather lengthy, since their aims are to introduce model theory to $C_p$-theorists, and $C_p$-theory to model theorists, respectively. ($C_p$-theory is that branch of general topology that deals with topological spaces composed of continuous real-valued functions with the pointwise topology.) Here, in contrast, we feel free to refer to the literature and not prove results that can easily be found elsewhere. In particular, for general topology we refer to \cite{Engelking1989}, for $C_p$-theory to \cite{ArhangelskiiFunction}, and for model theory to \cite{TentZiegler}. On the other hand, we hope we have put in sufficiently many definitions so that an analyst can follow our proofs.

Readers of \cite{Casazza}, \cite{Hamel}, and \cite{Hamelb} have found the formalism of ``language for pairs of structures'' unwieldy, so the authors of \cite{CDI} reverted to more traditional model theory. However, this came at a cost; it is unclear if their results can be generalized to logics other than compact ones --- see discussion in \cite{Hamelb} and below. We therefore have retained the formalism of \cite{Casazza}, which we don't consider all that bad. After all, just as we shall consider two Banach norms in one model, \emph{bitopologists} consider two metrics on one topological space.

In addition to the expository differences between this paper and \cite{Hamel} and \cite{Hamelb}, the important difference is that we greatly weaken the hypotheses on the spaces of types of the logics considered, getting rid of the \lind $\Sigma$ condition in \cite{Hamelb} and finding a natural weakening of both compactness and countable tightness that suffices to give us undefinability. This is not, however, the final word. Obviously, Tsirelson's space is definable in a logic in which there is a formula $\phi$ which is defined to hold for $x$ if and only if $x$ is Tsirelson's space. Can we claim that Tsirelson's space is not definable in any ``reasonable'' logic? We believe that is the case, but it is unclear how to define ``reasonable''. The space, after all, is also definable in the logic of first-order set theory. We believe the intention of Gowers' problem is to just consider logics that are appropriate for formalizing the analysis that analysts do. So $\varepsilon$'s and $\delta$'s, $<$, etc., but not the membership relation, which makes the problem (unsatisfactorily) go away if one assumes the usual axioms of set theory, including the Axiom of Choice. We succeed here in putting quite weak conditions on the logics considered in order to prove undefinability, but don't claim we have produced the final answer to Gowers' rather vague question.

\section{Model Theory}
We provide a brief introduction without proofs to the essential model theory we need. We first set out some elementary notions that will be used throughout. For a more detailed exposition, see \cite{TentZiegler}. A \emph{language} is a set of constants, function symbols and relation symbols, e.g. $\{\in\}$ is the language of set theory, where $\in$ is a binary relation, and $\{\bar{e}, *, ^{-1}\}$ is the language of group theory, where $\bar{e}$ is a constant, $*$ is a binary relation symbol and $^{-1}$ is an unary function symbol.
 \emph{First-order logic} is the usual logic used in mathematics: given a language $L$, 
 formulas are built recursively as certain finite strings of symbols built using the members of $L$, parentheses, variables, and the logical connectives
 $\wedge, \vee, \to, \leftrightarrow, \neg$ and quantifiers $\exists$, $\forall$.
 For the rest of this section let $L$ be a fixed language.

An $L$-structure $\modelm$ is a set $M$, called the \emph{universe} of $\modelm$ together with an \emph{interpretation} of elements of $L$, i.e., a function which assigns an element of $M$ to each constant symbol, a function from $M^n$ to $M$ to each $n$-ary function symbol, and a subset of $M^n$ to each $n$-ary relation symbol. Given an $L$-formula $\varphi$, we denote by $\varphi^{\modelm}$ the interpretation of $\varphi$ in $\modelm$.
A \emph{substructure} $\modeln$ of $\modelm$ is just an $L$-structure, the universe of which is $N$, a subset of $M$, containing all the same interpretations of symbols from $\modelm$. For instance, in the language of group theory, the formula $(\forall x)(\exists y)(x*y=\bar{e})$ is interpreted in the additive group $\ints$ as ``for every element $x$ of $\ints$, there is a $y$ in $\ints$ such that $x+y = 0$.''
 
A variable is said to be \emph{free} in a formula if it is not under the scope of any quantifier; a \emph{sentence} is a formula without free variables. A \emph{theory} is a set of sentences and their logical consequences; a theory is \emph{satisfiable} (or, \emph{has a model}) if $\phi^{\frak{M}}$ is true for every $\phi$ in $T$. A maximal satisfiable theory is said to be \emph{complete}. (Note \emph{consistency} is often defined proof-theoretically: not every sentence can be proved. The \emph{Completeness Theorem} for first-order logic asserts that consistency is equivalent to satisfiability). For example, the axioms and theorems of group theory constitute a theory. If $T$ is a theory, the \emph{satisfaction} relation $\modelm\models{T}$ is defined recursively to mean $\modelm$ is a model of $T$, i.e. $T$ is true in $\modelm$. For example, any group satisfies the axioms of group theory. Another example, 
$(\reals, +)\models(\forall x)(\forall y)(x*y=y*x)$.
Two models $\modelm$ and $\modeln$ are \emph{elementarily equivalent} if for any $L$-sentence $\varphi$ we have $\modelm\models{\varphi}$ if and only if 
$\modeln\models{\varphi}$; this is denoted
by $\modelm\equiv{\modeln}$.

If $\modelm$ is an $L$-structure and $A\subseteq{M}$, we can extend the language $L$ to $L(A)$ by adding a constant symbol 
$\dot{a}$ for each $a\in{A}$. In this case, an $L(A)$-formula is simply an $L$-formula with parameters from $A$.

Given a language $L$, two interesting topological spaces arise: the space of types and the space of $L$-structures. We shall now introduce them and state that the \emph{Compactness Theorem} for first-order logic is equivalent to either of those spaces being compact.
\begin{defi}\label{classictype}
Let $\modelm$ be an $L$-structure and $A\subseteq{M}$. A \emph{complete $n$-type} over $A$ in $\modelm$ is a maximal satisfiable set of $L(A)$-formulas in the variables $x_1, \dots, x_n$;
i.e., a maximal set $p(x_1, \dots, x_n)$ of formulas
$\varphi(x_1,\dots,x_n)$ such that there are
$a_1,\dots,a_n\in{M}$ such that for every $\varphi\in{p}(x_1, \ldots, x_n)$ we have $\modelm\models{\varphi(a_1,\dots,a_n)}$. The set of all complete $n$-types is denoted by  $\s^n(A)$.
\end{defi}

Given a formula $\varphi=\varphi(x_1,\dots,x_n)$, we define
$$[\varphi]=\{p\in{\s^n(A)} \colon \varphi\in{p}\}$$ where the superscript is omitted if it is $1$.

Note that all sets of the form $[\varphi]$ form a basis for a topology on $\s^n$. Moreover, since the negation: $\neg$ is available in first-order logic, each $[\varphi]$ is clopen. We will soon be dealing with other logics in which negation is not at our disposal and the $[\varphi]$'s will only constitute a basis for the closed sets. Most of the time, it is enough to study $1$-types, and then the same proofs can be carried out for $n$-types.

\begin{defi}\label{def:eqclass}
Let $\strl$ be the set of all equivalence classes under $\equiv$ of $L$-structures.
For each theory $T$ let 
$[T]=\{\widetilde{\modelm}\in{\strl} \colon \modelm\models{T}\}$ where $\widetilde{\modelm}$ is the equivalence class of $\modelm$. All the sets of the form $[T]$ constitute a basis
for the closed sets of the topology on $\strl$ known as the \emph{space of \mbox{$L$
-structures}}. We write $[\varphi]$ instead of $[\{\varphi\}]$.
\end{defi}

\begin{rmk*}
Considering equivalence classes under $\equiv$ instead of models is a way to avoid foundational issues: the upward L\"owenheim-Skolem theorem states that if a theory has a model of cardinality $\kappa$ then it has a model of cardinality $\lambda$ for every $\lambda\geq{\kappa}$. Consequently, dropping the modulo $\equiv$ would require us to deal with the proper class $\modt = \st{\modelm}{\modelm\models T}$. Considering equivalence classes is equivalent to simply considering the set of all satisfiable theories, which is a set of cardinality $\leq{2^{|L|}}$. A topological consequence of mod-ing out by $\equiv$ is that we guarantee that the topology on $\strl$ is Hausdorff. This is not necessarily the case otherwise, since there could be two different elementarily equivalent structures $\modelm$ and $\modeln$ which are not topologically distinguishable.
\end{rmk*}

Now we state a result which is a cornerstone of first-order logic:

\begin{thm}[The Compactness Theorem]
Let $T$ be an $L$-theory in first-order logic. If $T$ is finitely satisfiable, i.e., for any $\Delta\in{[T]^{<\omega}}$ there is an $L$-structure $\modelm_{\Delta}$ satisfying $\modelm_{\Delta}\models{T}$, then $T$ is satisfiable.
\end{thm}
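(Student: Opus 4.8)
The plan is to build a single model of $T$ directly out of the given family of finite models, via an ultraproduct, so that no proof-theoretic machinery is needed. First I would take as index set $I=[T]^{<\omega}$, the finite subsets of $T$, and for each sentence $\varphi\in T$ form the ``cone'' $X_\varphi=\{\Delta\in I\colon\varphi\in\Delta\}$. These cones have the finite intersection property, since $\{\varphi_1,\dots,\varphi_n\}\in X_{\varphi_1}\cap\dots\cap X_{\varphi_n}$, so the filter they generate is proper; by the Ultrafilter Lemma (a consequence of the Axiom of Choice) it extends to an ultrafilter $\mathcal{U}$ on $I$. The point of $\mathcal{U}$ is that every $X_\varphi$ becomes ``large''.

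Next I would form the ultraproduct $\modeln=\prod_{\Delta\in I}\modelm_\Delta/\mathcal{U}$, whose universe consists of $\mathcal{U}$-equivalence classes of functions $\Delta\mapsto a_\Delta\in M_\Delta$, with constants, function symbols, and relation symbols interpreted coordinatewise modulo $\mathcal{U}$. The technical heart of the argument is \L o\'s's Theorem, proved by induction on formula complexity: for every $L$-formula $\psi$ one shows that $\modeln\models\psi$ holds exactly when $\{\Delta\colon\modelm_\Delta\models\psi\}\in\mathcal{U}$. The atomic and propositional cases are routine bookkeeping—the negation step is precisely where one uses that for each set either it or its complement lies in $\mathcal{U}$—while the existential-quantifier step is where the ultrafilter structure is genuinely exploited, since one must select witnesses coordinatewise and then argue that the set of coordinates on which a witness exists belongs to $\mathcal{U}$. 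This quantifier case is the main obstacle and the only place demanding care.

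With \L o\'s's Theorem available the conclusion is immediate: for each $\varphi\in T$ we have $\modelm_\Delta\models\varphi$ whenever $\varphi\in\Delta$, so $X_\varphi\subseteq\{\Delta\colon\modelm_\Delta\models\varphi\}$; since $X_\varphi\in\mathcal{U}$ and $\mathcal{U}$ is upward closed, the larger set also lies in $\mathcal{U}$, whence $\modeln\models\varphi$. Therefore $\modeln\models T$ and $T$ is satisfiable. I note that a shorter route is available given the Completeness Theorem already recalled above: if $T$ were unsatisfiable it would be inconsistent, hence would prove a contradiction, but a formal derivation is a finite object using only finitely many sentences $\Delta\subseteq T$, so $\Delta$ would be unsatisfiable, contradicting finite satisfiability. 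I prefer the ultraproduct construction here because it is self-contained and produces the model explicitly from the family $\{\modelm_\Delta\}$ supplied in the hypothesis.
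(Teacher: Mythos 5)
Your ultraproduct argument is correct, but it is not the paper's route: the paper offers no proof of this theorem at all, simply deferring to the cited textbook, so your proof is necessarily a different and genuinely self-contained one. The steps check out: the cones $X_\varphi$ have the finite intersection property, the Ultrafilter Lemma extends them to an ultrafilter $\mathcal{U}$ on $[T]^{<\omega}$, and once \L o\'s's theorem is established by induction on formulas (with the existential step consuming choice for witnesses, as you note), upward closure of $\mathcal{U}$ gives $\modeln\models\varphi$ for every $\varphi\in T$. Two remarks. First, you silently repaired a typo in the statement: as printed, the hypothesis reads $\modelm_\Delta\models T$, which would make the conclusion trivial (any single such $\modelm_\Delta$ already witnesses satisfiability of $T$); the intended hypothesis, which your proof actually uses, is $\modelm_\Delta\models\Delta$. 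Second, the alternative you sketch at the end---unsatisfiable implies inconsistent by the Completeness Theorem, and a formal derivation of a contradiction involves only finitely many sentences---is precisely the route the paper gestures at in its preceding discussion, where it recalls that completeness identifies consistency with satisfiability. Your preferred ultraproduct construction buys independence from any proof calculus and an explicit model manufactured from the given family $\{\modelm_\Delta\}_{\Delta\in[T]^{<\omega}}$, at the price of proving \L o\'s's theorem; the completeness route is shorter but imports proof-theoretic machinery that the semantic, topological perspective of this paper otherwise avoids entirely.
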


\begin{proof}
  See \cite{TentZiegler}.
\end{proof}

The relationship between the Compactness Theorem and the space of structures and the space of  $n$-types is fundamental. A logic satisfies the Compactness Theorem if and only if its space of structures is compact, and if and only if its spaces of $n$-types are compact.
To formulate this precisely, we would need to formulate  what a logic is, as is done for example in \cite{Ebbinghaus2016}. We won't do this here; the following two theorems will easily be seen to apply to whatever logics are considered in this paper.

\begin{thm}\label{thm:SATcpct}
Given a logic and a language $L$, every finitely satisfiable set of $L$-formulas
is satisfiable if and only if the space of
$L$-structures $\strl$ is compact.
\end{thm}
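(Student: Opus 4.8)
The plan is to prove the equivalence by translating the logical notions into topological ones through the closed sets $[T]$ and then invoking the finite-intersection-property characterization of compactness: a space is compact if and only if every family of closed sets with the finite intersection property has nonempty intersection. The dictionary I would set up first is the following. For a theory $T$ we have $[T]=\bigcap_{\varphi\in T}[\varphi]$, so $T$ is satisfiable precisely when $[T]\neq\emptyset$, and $T$ is finitely satisfiable precisely when the family $\{[\varphi]:\varphi\in T\}$ of basic closed sets has the finite intersection property, since for a finite $\Delta\subseteq T$ one has $\bigcap_{\varphi\in\Delta}[\varphi]=[\Delta]$, which is nonempty exactly when $\Delta$ is satisfiable. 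Membership of $\widetilde{\modelm}$ in $[\varphi]$ depends only on the elementary equivalence class, so each $[\varphi]$ is well defined; and since the space of structures only distinguishes sentences, we read the members of $T$ as sentences (i.e.\ $T$ is a theory).

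The implication from compactness to the satisfiability statement is then immediate. Suppose $\strl$ is compact and let $T$ be finitely satisfiable. By the dictionary, $\{[\varphi]:\varphi\in T\}$ is a family of closed sets with the finite intersection property, so compactness forces $[T]=\bigcap_{\varphi\in T}[\varphi]\neq\emptyset$; that is, $T$ is satisfiable.

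For the converse I would use that the sets $[T]$ form a basis for the closed sets of $\strl$ (equivalently, that the $[\varphi]$ form a subbasis). Starting from an arbitrary family of closed sets with the finite intersection property, I would refine each member to the basic closed sets $[T]$ of which it is an intersection; this preserves the finite intersection property and leaves the total intersection unchanged, so it suffices to treat a family $\{[T_i]:i\in I\}$ with the finite intersection property. Put $S=\bigcup_{i\in I}T_i$. Any finite $\Delta\subseteq S$ is contained in a finite union $T_{i_1}\cup\cdots\cup T_{i_n}$, and $[T_{i_1}\cup\cdots\cup T_{i_n}]=[T_{i_1}]\cap\cdots\cap[T_{i_n}]\neq\emptyset$ by hypothesis, so $\Delta$ is satisfiable; thus $S$ is finitely satisfiable. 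The assumption now yields that $S$ is satisfiable, i.e.\ $[S]\neq\emptyset$, and since $[S]=\bigcap_{i\in I}[T_i]$ this is exactly the desired nonempty intersection. Hence $\strl$ is compact. Cleanly, one can phrase this last step as an application of the Alexander subbase theorem to the subbasis $\{[\varphi]\}$.

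The only real obstacle is the reduction in the converse: compactness, via the finite-intersection-property criterion, must be verified for \emph{all} closed sets, whereas the logical hypothesis speaks only about the distinguished sets $[T]$. The point to get right is that every closed set is an intersection of basic closed sets and that passing to these basic sets destroys neither the finite intersection property nor the total intersection --- this is precisely what the Alexander subbase theorem guarantees and what makes the argument go through. Everything else is bookkeeping, and the identical argument, carried out with the sets $[\varphi]$ for formulas $\varphi(x_1,\dots,x_n)$ inside a fixed $\s^n(A)$, yields the companion statement for the spaces of $n$-types.
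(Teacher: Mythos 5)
The paper offers no proof of Theorem \ref{thm:SATcpct} to compare against: it simply cites \cite{Hamel} and moves on, so your proposal can only be judged on its own merits, and on those it is correct. The dictionary is right ($[T]=\bigcap_{\varphi\in T}[\varphi]$, satisfiability of $T$ is exactly $[T]\neq\emptyset$, finite satisfiability is exactly the finite intersection property of $\{[\varphi]:\varphi\in T\}$), the forward direction is the immediate FIP argument, and your converse works: refining an arbitrary FIP family of closed sets to the basic closed sets $[T]$ of which each member is an intersection preserves FIP (each refined set contains the original member it came from, so finite intersections of refined sets contain finite intersections of original ones) and leaves the total intersection unchanged, after which $\bigcap_{i}[T_i]=\bigl[\bigcup_i T_i\bigr]$ converts the logical hypothesis into exactly the required nonemptiness. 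Two remarks. First, the refinement step silently uses the paper's Definition \ref{def:eqclass}, namely that the sets $[T]$ form a genuine basis for the closed sets of $\strl$ (every closed set is an intersection of them); in a hypothetical logic where they were only a subbasis (closed sets being intersections of finite unions of $[T]$'s), the refinement would not apply verbatim, and the Alexander subbase theorem you mention at the end would cease to be a stylistic alternative and become the tool actually needed. Second, granted the basis assumption, your direct argument is preferable to invoking Alexander: it is choice-free, since the basic closed sets are closed under arbitrary intersections via $\bigl[\bigcup_i T_i\bigr]=\bigcap_i[T_i]$, whereas Alexander's theorem relies on Zorn's lemma. Your closing observation that the identical bookkeeping inside a fixed $\s^n(A)$ yields Theorem \ref{thm:typesat} is also correct.
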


\begin{thm}\label{thm:typesat}
Given a logic, a language $L$, every finitely satisfiable set of $L$-formulas
is satisfiable if and only if all the spaces of types $\s^n$ are compact.
\end{thm}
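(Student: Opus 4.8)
The plan is to set up a dictionary between the semantic notions (finite satisfiability, satisfiability) and the topological notions (the finite intersection property, nonempty intersection) for the basic closed sets $[\varphi]$, and then to run the standard characterization of compactness through that dictionary. Fix $n$ and consider sets $\Sigma$ of $L$-formulas in the free variables $x_1,\dots,x_n$; this is the heart of the matter, the reduction of an arbitrary set of $L$-formulas to finitely many variables being routine bookkeeping (the purely sentential case running parallel to the structure space $\strl$ of Theorem~\ref{thm:SATcpct}). The two facts driving everything are: (i) a finite set $\Delta=\{\varphi_1,\dots,\varphi_k\}$ is satisfiable if and only if $[\varphi_1]\cap\dots\cap[\varphi_k]\neq\emptyset$, since a realizing tuple $\bar a$ yields the point $\mathrm{tp}(\bar a)\in\s^n$ lying in the intersection, and conversely any $p$ in the intersection is by definition a complete type, hence a satisfiable (indeed realized) set containing each $\varphi_i$; and (ii) consequently $\{[\varphi]:\varphi\in\Sigma\}$ has the finite intersection property exactly when $\Sigma$ is finitely satisfiable, while $\bigcap_{\varphi\in\Sigma}[\varphi]\neq\emptyset$ exactly when $\Sigma$ is satisfiable. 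I would also recall the standard fact that a space is compact if and only if every family of closed sets with the finite intersection property has nonempty intersection.

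For the direction ``$\s^n$ compact $\Rightarrow$ finite satisfiability implies satisfiability'' the dictionary applies verbatim. Given a finitely satisfiable $\Sigma$, fact (ii) makes $\{[\varphi]:\varphi\in\Sigma\}$ a family of closed sets with the finite intersection property; compactness of $\s^n$ forces $\bigcap_{\varphi\in\Sigma}[\varphi]\neq\emptyset$, and fact (ii) again reads this as the satisfiability of $\Sigma$.

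The reverse direction is the delicate one. Assuming finite satisfiability implies satisfiability, I want $\s^n$ compact, but the dictionary only controls the basic closed sets $[\varphi]$, whereas compactness quantifies over \emph{all} families of closed sets. The clean way around this is Alexander's subbase lemma (see \cite{Engelking1989}): since the sets $[\varphi]$ form a basis --- hence a subbasis --- for the closed sets of $\s^n$ (this is all we have in logics lacking negation, where the $[\varphi]$ need not be open), it suffices to verify the finite-intersection criterion for subfamilies of $\{[\varphi]\}$. So let $\{[\varphi]:\varphi\in\Sigma\}$ have the finite intersection property; by fact (ii) this says $\Sigma$ is finitely satisfiable, the hypothesis upgrades this to satisfiability, and fact (ii) hands back $\bigcap_{\varphi\in\Sigma}[\varphi]\neq\emptyset$. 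Alexander's lemma then delivers compactness of $\s^n$.

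The main obstacle is precisely this last reduction. Naively one wants every finite-intersection family of closed sets to meet, but the semantics speaks only to the generators $[\varphi]$; Alexander's subbase lemma is what legitimately confines the verification to those generators, and it is exactly the tool that keeps the argument valid for the general logics considered here, where $[\varphi]$ is merely closed. A secondary point to watch is fact (i): one must confirm that a realized type $\mathrm{tp}(\bar a)$ is genuinely a point of $\s^n$, i.e. a maximal satisfiable set, and that each satisfiable finite $\Delta$ extends to such a point --- immediate in first-order logic via negation, and to be checked under the weaker standing assumptions on $\s^n$ used in this paper.
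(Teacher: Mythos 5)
The paper itself gives no proof of Theorem~\ref{thm:typesat} --- it simply defers to \cite{TentZiegler} --- and your argument is, in substance, exactly the standard one that citation points to: the dictionary between (finite) satisfiability of $\Sigma$ and the finite intersection property / nonempty intersection of $\{[\varphi]:\varphi\in\Sigma\}$, followed by a reduction of compactness to the basic closed sets. That core is sound. One simplification worth noting: since the $[\varphi]$ form a \emph{base} (not merely a subbase) for the closed sets, Alexander's subbase lemma is heavier machinery than you need. If $\mathcal{F}$ is any family of closed sets with the finite intersection property, write each $F\in\mathcal{F}$ as $F=\bigcap\mathcal{B}_F$ with $\mathcal{B}_F$ a family of basic closed sets; then $\bigcup_{F\in\mathcal{F}}\mathcal{B}_F$ again has the finite intersection property (any finite intersection of its members contains the corresponding finite intersection of members of $\mathcal{F}$) and has the same total intersection, so the verification confines itself to the generators by an elementary argument. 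Using Alexander is harmless, just unnecessary.

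The two points you set aside deserve unequal treatment. The maximality point you flagged --- that $\mathrm{tp}(\bar a)$ really is a point of $\s^n$, equivalently that every satisfiable set extends to a complete type --- is indeed where logic-specific expressive power enters: negation settles it in first-order logic, and for the continuous logics of this paper closure under continuous connectives plays the same role; you were right to isolate it, and with it your proof of both directions is complete \emph{for sets of formulas in a fixed finite tuple of variables}. The genuine gap is the step you dismiss as ``routine bookkeeping.'' A set of $L$-formulas may have infinitely many free variables in total, and then the family $\{[\varphi]:\varphi\in\Sigma\}$ does not live in any single space $\s^n$, so the finite-intersection argument cannot even be set up there; compactness of all the finite-variable type spaces does not by itself hand you one structure and one assignment realizing $\Sigma$. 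Gluing a coherent system of finite-variable types into a single realization is itself a compactness-type fact: in first-order logic one does it with an ultraproduct, or by Henkinizing --- which changes the language and hence the type spaces, stepping outside the hypothesis --- and in the general logics this paper considers there is no such tool available for free. So either the theorem must be read, as \cite{TentZiegler} effectively does, for sets of formulas in finitely many variables (the sentential case running parallel to Theorem~\ref{thm:SATcpct}), or the direction ``$\s^n$ compact for all $n$ implies finite satisfiability yields satisfiability'' needs this gluing argument supplied; as written, that is the one step of your proof that would fail.
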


A proof of Theorem \ref{thm:SATcpct} can be found in \cite{Hamel}. A proof of Theorem \ref{thm:typesat} can be found in \cite{TentZiegler}.

\section{$C_p$-theory}
Again, we provide a brief introduction. We shall provide some proofs which are not difficult but are difficult to extract from the literature. 

\subsection{Some Basic Results}
We shall assume basic knowledge of general topology such as can be found in any of the standard texts for a first graduate course, such as \cite{Willard2012}. Unless otherwise stated, we assume that all the topological spaces we consider are infinite, completely regular, and Hausdorff.

\begin{defi}
	Let $X$ be a completely regular Hausdorff topological space. $C_p(X)$ is the space of real-valued continuous functions on $X$, endowed with the subspace topology inherited from the product topology on $\reals^X$. Thus, the basic open sets of $C_p(X)$ are just the basic open sets of $\reals^X$ intersected with $C_p(X)$, i.e.,
	\begin{linenomath*}
	\[W(g;x_0, \ldots, x_n; \epsi) = \st{f \in C_p(X)}{(\forall i \leq n)(\abs{f(x_i) - g(x_i)} < \epsi)}.\]
	\end{linenomath*}
\end{defi}

\begin{rmk*}
	It follows from the form of the basic open sets of $\reals^X$ and complete regularity that $C_p(X)$ is dense in $\reals^X$. We shall occasionally deal with $C_p(X, [0, 1])$ and $C_p(X, 2)$, with the obvious meanings. Notice that each of these is a closed subspace of $C_p(X)$. Since the main properties we deal with are closed-hereditary, if we prove results for $C_p(X)$, they apply also to the other two cases. In particular, this means that our results can be applied to first-order logic, in which $C_p(X, 2)$ is relevant.
\end{rmk*}

\begin{lem}
\label{lem:rstrct}
	Let $X$ be any topological space, $Y \subseteq X$, and $\pi_Y: C_p(X) \to C_p(Y)$ the restriction map given by $\pi_Y(f) = \rstrct{f}{Y}$, $f$ restricted to $Y$. Then:
	\begin{enumerate}[label=(\roman*)]
		\item $\pi_Y$ is continuous.
		\item $\clsr{\pi_Y\left(C_p(X)\right)} = C_p(Y)$.
		\item If $Y \subseteq X$ is dense, then $\pi_Y: C_p(X) \to \pi_Y\left(C_p(X)\right)$ is a continuous bijection.
	\end{enumerate}
\end{lem}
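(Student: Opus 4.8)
The plan is to prove the three parts in order, since each is essentially a routine verification once the right observation is made. For part (i), the restriction map $\pi_Y$ is continuous because $C_p(X)$ and $C_p(Y)$ both carry topologies of pointwise convergence, and restriction respects pointwise evaluation: for a basic open set $W(g; y_0, \ldots, y_n; \epsi)$ in $C_p(Y)$, with each $y_i \in Y \subseteq X$, the preimage under $\pi_Y$ is simply the set of $f \in C_p(X)$ satisfying $\abs{f(y_i) - \tilde{g}(y_i)} < \epsi$ for any extension $\tilde{g}$ of $g$, which is exactly a basic open set $W(\tilde g; y_0, \ldots, y_n; \epsi)$ intersected with $C_p(X)$. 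Hence preimages of subbasic (and thus basic) open sets are open, giving continuity.

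For part (ii), I would show the two inclusions. The inclusion $\clsr{\pi_Y(C_p(X))} \subseteq C_p(Y)$ is immediate because $\pi_Y(C_p(X)) \subseteq C_p(Y)$ and $C_p(Y)$ is closed in $\reals^Y$. For the reverse inclusion I would use a density argument in the pointwise topology: given any $h \in C_p(Y)$ and a basic neighborhood $W(h; y_0, \ldots, y_n; \epsi)$, I need an $f \in C_p(X)$ whose restriction lands in this neighborhood, i.e. a continuous $f$ on $X$ with $\abs{f(y_i) - h(y_i)} < \epsi$ for each $i$. Since the $y_i$ are finitely many distinct points of $X$ and $X$ is completely regular Hausdorff, I can interpolate: using complete regularity (which separates points from closed sets and, with Hausdorffness, lets one build a continuous function taking prescribed values at finitely many points), there exists a continuous $f : X \to \reals$ with $f(y_i) = h(y_i)$ for all $i$. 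Its restriction then lies in the given neighborhood, so $h$ is in the closure of $\pi_Y(C_p(X))$.

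For part (iii), assuming $Y$ is dense in $X$, I must show $\pi_Y$ is injective (surjectivity onto its image is automatic, and continuity is part (i)). Injectivity is the key point: if $f, g \in C_p(X)$ agree on $Y$, then $f - g$ is a continuous function vanishing on the dense set $Y$; by continuity and density it vanishes on all of $X$, so $f = g$. This uses only that two continuous real-valued functions agreeing on a dense subset of a Hausdorff (indeed any) space must agree everywhere.

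The main obstacle is the interpolation step in part (ii): extracting from complete regularity a single continuous function on $X$ hitting finitely many prescribed real values at finitely many prescribed points. Complete regularity as usually stated separates a point from a closed set not containing it by a function valued in $[0,1]$; the standard move is to take, for each index $i$, a continuous function $g_i$ with $g_i(y_i) = 1$ and $g_i(y_j) = 0$ for $j \neq i$ (possible because the finite set $\{y_j : j \neq i\}$ is closed in the Hausdorff space $X$ and $y_i$ lies outside it), and then set $f = \sum_{i} h(y_i)\, g_i$, which is continuous and satisfies $f(y_i) = h(y_i)$ exactly. I would make sure to note the finiteness is what keeps this sum continuous and the Hausdorff condition is what makes each $\{y_j : j \neq i\}$ closed.
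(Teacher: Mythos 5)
Your argument is correct in substance and is the standard verification; note that the paper does not actually prove this lemma---its ``proof'' is just a citation to Arhangel'ski\u{\i}'s book---so there is no in-paper argument to diverge from, and yours fills in exactly the details one would expect. Parts (i) and (iii) are fine as written: the preimage of a finite-coordinate condition is a finite-coordinate condition, and two continuous real-valued functions agreeing on a dense subset agree everywhere. The interpolation step in (ii), building $f=\sum_i h(y_i)\,g_i$ from complete regularity and the fact that finite sets are closed, is also the right (and necessary) idea, and its hypotheses are covered by the paper's blanket assumption that all spaces are completely regular and Hausdorff (the lemma's ``any topological space'' must be read against that standing convention).

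One assertion in part (ii), however, is false and should be repaired: you justify the inclusion $\clsr{\pi_Y(C_p(X))} \subseteq C_p(Y)$ by claiming that ``$C_p(Y)$ is closed in $\reals^Y$.'' It is not---$C_p(Y)$ is closed in $\reals^Y$ only when $Y$ is discrete; in general it is \emph{dense} in $\reals^Y$, which is precisely the paper's remark immediately following the definition of $C_p(X)$. The inclusion you want holds for a simpler reason: the closure in part (ii) must be taken inside $C_p(Y)$, since that is the only reading under which the lemma is true (with the closure taken in $\reals^Y$, your own interpolation argument shows $\pi_Y(C_p(X))$ is dense in $\reals^Y$, so its closure there would be all of $\reals^Y$, not $C_p(Y)$); and a closure computed within $C_p(Y)$ is automatically a subset of $C_p(Y)$. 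With that one-line repair, part (ii) says exactly that $\pi_Y(C_p(X))$ is dense in $C_p(Y)$, which your interpolation argument establishes correctly.
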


\begin{proof}
See \cite{ArhangelskiiFunction}.
\end{proof}

The following results sample the interrelations between the topology of $X$ and the topology of $C_p(X)$. First, we introduce some topological cardinal functions.

\begin{defi}
	Let $(X, \mcal{T})$ be a topological space.
\begin{enumerate}[label=(\alph*)]
	\item A \emph{network} for $X$ is a family $\mathcal{B}$ of sets such that $(\forall x \in X)(\forall U \in \mcal{T})[(x \in U) \implies (\exists V \in \mathcal{B}) (x \in V \subseteq U)]$. The \emph{network weight} $n(X)$ of $X$ is the minimal cardinality of a network of $X$. A \emph{base} is a network for open sets. The \emph{weight} $w(X)$ of $X$ is the minimal cardinality of a base for $X$.

	\item A \emph{local base of $X$ at a point $x \in X$} is a family $\mathcal{B}_x$ of open sets such that $x \in \Insect \mathcal{B}_x$ and for any open set $U$ containing $x$, there is some $V \in \mathcal{B}_x$ such that $x \in V \subseteq U$. The \emph{character of $X$ at $x$}, denoted $\chi(x,X)$, is the minimal cardinality of a local base of $X$ at $x$. The \emph{character of $X$} is defined by $\chi(X) = \sup\st{\chi(x,X)}{x \in X}$.
\end{enumerate}
\end{defi}

\begin{thm}
\label{thm:crds}
	Given an infinite topological space $X$, the following equalities hold:
	\begin{linenomath*}
	\[\crd{X} = \chi(C_p(X)) = w(C_p(X)).\]
	\end{linenomath*}
\end{thm}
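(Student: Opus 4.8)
The plan is to establish the chain of inequalities
\[
\crd{X} \le \chi(C_p(X)) \le w(C_p(X)) \le \crd{X},
\]
from which all three quantities coincide. The middle inequality is the general topological fact that $\chi(Y)\le w(Y)$ for every space $Y$: given a base $\mathcal{B}$ of $C_p(X)$ of minimal cardinality $w(C_p(X))$, the members of $\mathcal{B}$ that contain a fixed point $f$ form a local base at $f$, so $\chi(f,C_p(X)) \le w(C_p(X))$ for every $f$, and hence $\chi(C_p(X)) \le w(C_p(X))$.

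For the right-hand inequality I would bound the weight of the ambient product $\reals^X$. A base for $\reals^X$ is obtained by choosing a finite set $S \in [X]^{<\omega}$ and, for each $x \in S$, a rational open interval in $\reals$; since $\crd{[X]^{<\omega}} = \crd{X}$ for infinite $X$ and there are only countably many such assignments over a given finite $S$, this base has cardinality at most $\crd{X}$, whence $w(\reals^X) \le \crd{X}$. Weight is monotone under subspaces (restrict a base), and $C_p(X)$ is a subspace of $\reals^X$, so $w(C_p(X)) \le w(\reals^X) \le \crd{X}$.

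The heart of the argument --- and the step I expect to be the main obstacle --- is the left-hand inequality $\crd{X} \le \chi(C_p(X))$, which must use complete regularity in an essential way. Since $\chi(C_p(X)) = \sup\st{\chi(f, C_p(X))}{f \in C_p(X)}$, it suffices to bound from below the character at the single point $\mathbf{0}$, the constant zero function. Suppose toward a contradiction that $\mathcal{B}$ is a local base at $\mathbf{0}$ with $\crd{\mathcal{B}} < \crd{X}$. Each $B \in \mathcal{B}$ contains a basic neighborhood of the form $W(\mathbf{0}; F_B; \epsi_B)$ with $F_B \subseteq X$ finite, so $\bigcup_{B \in \mathcal{B}} F_B$ is a union of at most $\crd{\mathcal{B}}$ finite sets and hence (as $X$ is infinite and $\crd{\mathcal{B}} < \crd{X}$) has cardinality $< \crd{X}$; pick $y \in X \setminus \bigcup_{B \in \mathcal{B}} F_B$. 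The set $U = \st{f \in C_p(X)}{\abs{f(y)} < 1}$ is an open neighborhood of $\mathbf{0}$, so there is some $B \in \mathcal{B}$ with $B \subseteq U$. Now $y \notin F_B$, and $F_B \cup \{y\}$ is finite, hence closed, in the completely regular Hausdorff space $X$; complete regularity therefore yields a continuous $f: X \to \reals$ with $f$ identically $0$ on $F_B$ and $f(y) = 1$. Then $f \in W(\mathbf{0}; F_B; \epsi_B) \subseteq B \subseteq U$, yet $\abs{f(y)} = 1$ shows $f \notin U$ --- a contradiction. Hence every local base at $\mathbf{0}$ has cardinality at least $\crd{X}$, which gives $\crd{X} \le \chi(C_p(X))$ and closes the chain.
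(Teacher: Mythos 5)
Your proof is correct, but note that the paper does not actually prove this theorem: its ``proof'' consists entirely of the citation ``See \cite{ArhangelskiiFunction}'', so there is no in-paper argument to compare yours against. Your chain $\crd{X} \le \chi(C_p(X)) \le w(C_p(X)) \le \crd{X}$ is the standard argument (essentially what one finds in \arhan's book): the middle inequality is the general fact $\chi \le w$; the upper bound $w(C_p(X)) \le \crd{X}$ follows from counting the canonical base of $\reals^X$ (finite subsets of $X$ paired with rational intervals) together with monotonicity of weight under subspaces; and the lower bound $\crd{X} \le \chi(C_p(X))$ is exactly where complete regularity of $X$ enters, via a Urysohn-type function vanishing on the finite support $F_B$ but equal to $1$ at a point $y$ avoided by all the supports. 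All the steps check out, including the cardinality bookkeeping (a union of fewer than $\crd{X}$ finite supports has size $< \crd{X}$ because $X$ is infinite) and the reduction to bounding the character at the single point $\mathbf{0}$ from below, which suffices since $\chi(C_p(X))$ is a supremum over points. So your proposal supplies a complete, self-contained proof of a fact the paper delegates to the literature.
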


\begin{proof}
	See \cite{ArhangelskiiFunction}.
\end{proof}

We need some other definitions:
\begin{defi}\label{ccpt}
Let $X$ be a topological space.
\begin{enumerate}[label=(\alph*)]
	\item\label{def:relcpct} A subset $A \subseteq X$ is \emph{relatively compact} (in $X$) if $\clsr{A}$ is compact.

	\item\label{def:cntcpct} $X$ is \emph{countably compact} if every infinite subset $Y$ of $X$ has a \emph{limit point}, i.e., a point $x$ such that every open set about $x$ contains infinitely many points of $Y$. If $A \subseteq B \subseteq X$, $A$ is \emph{countably compact in $B$} if every infinite subset of $A$ has a limit point in $B$. $A$ is \emph{relatively countably compact} if its closure in $X$ is countably compact.

	\item A set of real-valued functions $A \subseteq \reals^X$ is \emph{pointwise bounded} if for every $x \in X$ there is an $M_x>0$ such that for all $f \in A$, $\abs{f(x)} < M_x$.
	
	\item The \emph{\lind number $L(X)$ of $X$} is the smallest infinite cardinal such that every open cover includes a subcover of cardinality $\leq L(X)$. $X$ is \emph{\lind} if $L(X) = \aleph_0$.
\end{enumerate}
\end{defi}

Definition \ref{ccpt} (b) can be restated in terms that justify the choice of words for "countable compactness":

\begin{prop}
    A $T_1$ space $X$ is countably compact if and only if every countable open cover of $X$ includes a finite subcover.
\end{prop}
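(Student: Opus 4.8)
The plan is to prove the two implications separately; only the second will actually use the $T_1$ hypothesis. \emph{Suppose first that $X$ is countably compact} and let $\{U_n : n \in \omega\}$ be a countable open cover admitting no finite subcover. For each $n$ the set $\bigcup_{k \le n} U_k$ is a proper subset of $X$, so I can choose a witness $x_n \in X \setminus \bigcup_{k \le n} U_k$. Put $A = \{x_n : n \in \omega\}$. If $A$ were finite, some single point would occur as $x_n$ for arbitrarily large $n$, forcing that point to lie outside every $U_k$ and contradicting that the $U_n$ cover $X$; hence $A$ is infinite. Countable compactness then yields a limit point $x$ of $A$, and since the $U_n$ cover $X$ we have $x \in U_m$ for some $m$. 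But $U_m$ is an open neighborhood of $x$, so it must contain infinitely many elements of $A$, whereas $x_n \notin U_m$ for every $n \ge m$ by the choice of the witnesses---so $U_m$ meets $A$ in only finitely many points, a contradiction. This direction needs no separation axiom.

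For the converse I argue the contrapositive: \emph{assuming every countable open cover of $X$ has a finite subcover, I show $X$ is countably compact.} If not, there is an infinite $A \subseteq X$ with no limit point, and passing to a subset I may assume $A = \{a_n : n \in \omega\}$ consists of distinct points and still has no limit point. The key step, and the only place $T_1$ is used, is to upgrade ``$A$ has no limit point'' to ``$A$ is closed and discrete.'' For discreteness, each $a_n$ has a neighborhood meeting $A$ in a finite set (otherwise $a_n$ would be a limit point), and since finite sets are closed in a $T_1$ space I can shrink this to an open $V_n$ with $V_n \cap A = \{a_n\}$. For closedness, any point of $\overline{A} \setminus A$ would have every neighborhood meeting $A$; trimming away any finite intersection using $T_1$ again shows every such neighborhood meets $A$ infinitely, making the point a limit point of $A$---impossible. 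Thus $X \setminus A$ is open, and $\{X \setminus A\} \cup \{V_n : n \in \omega\}$ is a countable open cover in which each member contains at most one point of $A$. No finite subfamily can cover the infinite set $A$, so this cover has no finite subcover, contradicting the hypothesis.

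The substantive point---and the main obstacle---is exactly this $T_1$-dependent passage from the paper's notion of limit point (every neighborhood containing \emph{infinitely many} points of the set) to closed discreteness. The definition counts accumulation with multiplicity, and that notion coincides with ordinary topological accumulation only under $T_1$: separating a point from a finite subset is the one ingredient that fails in general and is precisely what makes the equivalence hold here. The remainder of the argument is just bookkeeping about which sets a chosen open cover can and cannot contain.
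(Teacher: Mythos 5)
Your proof is correct and complete. The paper itself gives no argument for this proposition (its ``proof'' is simply a citation to standard general topology texts), and what you have written is precisely the standard textbook argument being referred to: extract witnesses outside increasing finite unions for one direction, and for the converse use $T_1$ to upgrade an infinite set with no limit point (in the paper's ``infinitely many points in every neighbourhood'' sense) to a closed discrete set, which then yields a countable cover with no finite subcover. You have also correctly isolated the role of the $T_1$ hypothesis --- it is needed only to pass from the $\omega$-accumulation notion of limit point to closed discreteness, and the forward direction holds with no separation axiom --- which is exactly the point that makes the statement require $T_1$ at all.
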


\begin{proof}
	See any general topology text.
\end{proof}

\begin{rmk*}
  We are following \cite{Koenig1987} in Definition \ref{ccpt}\ref{def:relcpct} and \ref{ccpt}\ref{def:cntcpct} above so that we can quote their results. An unfortunate difficulty is that, in contrast to Definition \ref{ccpt}\ref{def:relcpct}, if $A$ is relatively countably compact in $X$, its closure need not be countably compact, although the converse holds. The implication does hold for normal spaces $X$ -- see Lemma \ref{clsrcountablycmpct} below.
\end{rmk*}

Now we state \emph{Grothendieck's Theorem} \cite{Grothendieck1952}, which relates properties of $X$ to properties of $C_p(X)$. It can be considered the first theorem of $C_p$-theory. A proof can be found, for example, in \cite{TodorcevicTopics}.

\begin{thm}
	Let $X$ be a countably compact topological space, and $A \subseteq C_p(X)$. Then $A$ is relatively compact (in $C_p(X)$) if and only if it is countably compact in $C_p(X)$.
\end{thm}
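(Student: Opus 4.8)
The plan is to prove the substantive half of the equivalence --- that a set countably compact in $C_p(X)$ is relatively compact --- by Grothendieck's classical interchange-of-limits argument, with the two countable compactness hypotheses supplying \emph{cluster points} in place of the sequential limits one would enjoy in a first countable setting. The reverse implication is immediate and uses nothing about $X$: if $\overline{A}$ is compact in $C_p(X)$, it is limit point compact (an infinite subset with no limit point would let the neighborhoods meeting it finitely cover the compact $\overline{A}$ by finitely many, forcing the subset to be finite), so every infinite subset of $A\subseteq\overline{A}$ has a limit point in $\overline{A}\subseteq C_p(X)$.

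For the forward direction, assume $A$ is countably compact in $C_p(X)$. First I would verify that $A$ is pointwise bounded: if $\abs{f_n(x)}\ge n$ for some $x$ and $f_n\in A$, the infinite set $\{f_n\}$ would have a limit point $h\in C_p(X)$, yet the basic neighborhood $\st{k}{\abs{k(x)-h(x)}<1}$ of $h$ can contain only finitely many $f_n$, a contradiction. Pointwise boundedness places $A$ inside a product $\prod_{x\in X}[-M_x,M_x]$, compact by Tychonoff, so the closure $\overline{A}^{\reals^X}$ of $A$ in $\reals^X$ is compact. The whole problem is thereby reduced to showing $\overline{A}^{\reals^X}\subseteq C_p(X)$; once this holds, $\overline{A}^{\reals^X}=\overline{A}^{C_p(X)}$ is compact and $A$ is relatively compact.

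The heart of the matter, and the step I expect to be the main obstacle, is showing every $g\in\overline{A}^{\reals^X}$ is continuous. Supposing $g$ is discontinuous at some $x_0$ with gap $\varepsilon$, I would interleave an inductive construction of functions $f_n\in A$ and points $x_m\in X$: using $g\in\overline{A}^{\reals^X}$, choose $f_n$ within $1/n$ of $g$ at the finitely many points $x_0,\dots,x_{n-1}$ already selected; using continuity of $f_1,\dots,f_n$ together with the discontinuity of $g$, choose $x_n$ inside the neighborhood on which these functions vary by less than $1/n$ from their values at $x_0$, yet with $\abs{g(x_n)-g(x_0)}\ge\varepsilon$. This yields $\lim_m f_n(x_m)=f_n(x_0)$ and $\lim_n f_n(x_0)=g(x_0)$ in one order, while $\lim_n f_n(x_m)=g(x_m)$ with every $g(x_m)$ staying $\varepsilon$-far from $g(x_0)$ in the other.

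The delicate point is that $X$ and $C_p(X)$ need not be first countable, so these sequences need not converge; this is precisely where the countable compactness hypotheses enter, via the standing Hausdorff assumption that guarantees cluster points. Countable compactness of $A$ in $C_p(X)$ yields a cluster point $f^*\in C_p(X)$ of $(f_n)$, and countable compactness of $X$ yields a cluster point $x^*\in X$ of $(x_m)$. Since all evaluation maps are continuous, $f^*$ inherits the relevant cluster values: from $f_n(x_m)\to g(x_m)$ one gets $f^*(x_m)=g(x_m)$, so $\abs{f^*(x_m)-f^*(x_0)}\ge\varepsilon$; and from $f_n(x_m)\to f_n(x_0)$ one gets $f_n(x^*)=f_n(x_0)$ for every $n$, whence $f^*(x^*)=\lim_n f_n(x_0)=g(x_0)=f^*(x_0)$. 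On the other hand, continuity of $f^*$ at the cluster point $x^*$ forces $f^*(x^*)$ to be a cluster value of $(f^*(x_m))=(g(x_m))$, hence $\abs{f^*(x^*)-f^*(x_0)}\ge\varepsilon$, contradicting $f^*(x^*)=f^*(x_0)$. Thus $g$ is continuous, completing the reduction. I anticipate the only real subtleties to be the repeated use of ``a cluster value of a convergent real sequence equals its limit'' and the careful invocation of the $T_1$/Hausdorff hypotheses to ensure sequences in countably compact spaces have cluster points.
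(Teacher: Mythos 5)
Your proof is correct. The paper itself does not prove this theorem (it is Grothendieck's classical theorem, cited to the literature rather than proved), and your argument is essentially the standard one found in the cited sources: pointwise boundedness places $\overline{A}$ (closure in $\reals^X$) inside a compact product, and the interleaved construction of $f_n$ and $x_m$, combined with the cluster points $f^* \in C_p(X)$ and $x^* \in X$ supplied by the two countable compactness hypotheses, forces the contradiction $f^*(x^*) = f^*(x_0)$ versus $\abs{f^*(x^*) - f^*(x_0)} \geq \varepsilon$, so every function in $\overline{A}$ is continuous. One minor point: your worry about $T_1$/Hausdorff is unnecessary under the paper's definition of limit point (every neighbourhood contains infinitely many points of the set), since that form of countable compactness yields sequence cluster points with no separation axiom; it is harmless anyway, as the paper assumes all spaces are completely regular and Hausdorff.
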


\begin{defi}
	A \emph{$g$-space} is a topological space $X$ satisfying the following property: for every $A \subseteq X$, if $A$ is countably compact in $X$ then $A$ is relatively compact. We say that $X$ is a \emph{hereditary $g$-space} if every subspace of $X$ is a $g$-space.
\end{defi}

The Fr\'echet-Urysohn property arises naturally in the context of $g$-spaces.

\begin{defi}
	Let $X$ be a topological space. $X$ is \emph{Fr\'echet-Urysohn} if whenever $A \subseteq X$, every point $p$ in $\clsr{A}$ is the limit of a sequence of points in $A$, i.e., every open set about $p$ contains all but finitely many members of the sequence.
\end{defi}

The following proposition from \cite{Arhangelskii1997a} is useful for generalizing Grothendieck's Theorem:

\begin{prop}
\label{prop:gspace}
	A $g$-space $Y$ is a hereditary $g$-space if and only if every compact subspace of $Y$ is Fr\'echet-Urysohn.
\end{prop}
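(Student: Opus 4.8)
The plan is to prove the two implications separately, in each case reducing the statement to a single clean fact about limits of sequences that the \frech hypothesis supplies, together with the $g$-space property inherited from $Y$.

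For the direction ($\Leftarrow$), I would assume $Y$ is a $g$-space in which every compact subspace is \frech, fix an arbitrary subspace $Z \subseteq Y$, and take $A \subseteq Z$ that is countably compact in $Z$. First I would observe that $A$ is then countably compact in $Y$ as well, so the $g$-space hypothesis forces $K := \overline{A}^{Y}$ to be compact. The whole direction then rests on the claim that $K \subseteq Z$: once this is established, $\overline{A}^{Z} = K \cap Z = K$ is compact, which is exactly what it means for $Z$ to be a $g$-space. To prove $K \subseteq Z$ I would argue by contradiction. A point $q \in K \setminus Z$ lies in $\overline{A}^{K}$ and is not in $A$, so \frech-ness of the compact subspace $K$ yields a sequence in $A$ converging to $q$; since $q \notin A$, this sequence has infinite range whose unique limit point (by Hausdorffness) is $q \notin Z$, contradicting countable compactness of $A$ in $Z$.

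For the direction ($\Rightarrow$), I would argue by contraposition: assuming some compact subspace $K$ fails to be \frech, I would produce a subspace of $Y$ that is not a $g$-space. Non-\frech-ness gives $A \subseteq K$ and a point $p \in \overline{A}^{K} \setminus A$ with no sequence from $A$ converging to $p$. I would then pass to the subspace $Z := K \setminus \{p\}$ and split into two cases according to whether $A$ is countably compact in $Z$. If it is, then $\overline{A}^{Z} = \overline{A}^{K} \setminus \{p\}$ is not compact, since $p$ is necessarily a non-isolated point of the compact space $\overline{A}^{K}$ (an isolated $p$ would admit a neighborhood meeting $A$ only in $\{p\} = \emptyset$, contradicting $p \in \overline{A}^{K}$); hence $Z$ is not a $g$-space, as required. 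If instead $A$ is not countably compact in $Z$, there is an infinite $S \subseteq A$ with no limit point in $Z$, so that $p$ is the \emph{unique} limit point of $S$ in the compact space $K$, and enumerating $S$ injectively should yield a sequence in $A$ converging to $p$, contradicting the choice of $A$ and $p$.

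The main obstacle, and the technical heart of the $(\Rightarrow)$ direction, will be this last step: showing that a countably infinite subset of a compact space whose only cluster point is $p$ can be enumerated as a sequence converging to $p$. Here I would use compactness directly: if an enumeration $(s_n)$ did not converge to $p$, some neighborhood $U$ of $p$ would miss infinitely many terms, and that infinite subfamily, lying inside the compact set $K \setminus U$, would have a cluster point different from $p$, contradicting uniqueness. The remaining ingredients---every infinite subset of a compact space has a cluster point, deleting a non-isolated point from a compact Hausdorff space destroys compactness, and a convergent sequence in a Hausdorff space has a unique limit point---are standard, and I would invoke them without detailed proof.
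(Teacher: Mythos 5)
The paper gives no proof of this proposition at all---it is quoted from \cite{Arhangelskii1997a}---so there is nothing in the text to compare your argument against; what you have supplied is a self-contained elementary proof from the paper's definitions, and it is essentially correct. In the ($\Leftarrow$) direction, the reduction to showing $K=\overline{A}^{Y}\subseteq Z$ is sound: Fr\'echet-Urysohn-ness of the compact set $K$ gives a sequence in $A$ converging to a putative point $q\in K\setminus Z$, and since $q\notin A$ the range of that sequence is infinite with $q$ as its unique $\omega$-accumulation point (Hausdorffness, assumed throughout the paper, is what this uses), contradicting countable compactness of $A$ in $Z$. In the ($\Rightarrow$) direction, deleting the witness point $p$ and splitting on whether $A$ stays countably compact in $Z=K\setminus\{p\}$ also works; note that your Case 2 ends in a contradiction, so it is vacuous, and the real content is that $A$ is automatically countably compact in $Z$, whereupon Case 1 exhibits the non-$g$-subspace via the standard facts that $p$ is non-isolated in $\overline{A}^{K}$ and that removing a non-isolated point from a compact Hausdorff space destroys compactness. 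Two small repairs: first, in Case 2 the infinite set $S$ witnessing the failure of countable compactness need not be countable, so it cannot be ``enumerated''; pass first to a countably infinite subset of $S$ (which still has no limit point in $Z$, since any limit point of a subset is a limit point of $S$), after which your compactness argument that the enumeration converges to $p$ goes through verbatim. Second, the parenthetical justifying that $p$ is non-isolated should say that an isolating neighbourhood $U$ would meet $A$ inside $\{p\}\cap A=\emptyset$, contradicting $p\in\overline{A}^{K}$; as written (``$\{p\}=\emptyset$'') it is garbled, though the intended argument is clearly right.
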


Hereditary $g$-spaces are also called \emph{angelic}.

The following definition is the starting point for generalizing \groth's Theorem to a broader class of spaces:

\begin{defi}[\cite{Arhangelskii1997a}]
	A topological space $X$ is \emph{Grothendieck} if $C_p(X)$ is a hereditary $g$-space. $X$ is \emph{weakly Grothendieck} if $C_p(X)$ is a $g$-space.
\end{defi}

Any uncountable discrete space is an example of a space that is weakly Grothendieck but not Grothendieck \cite{Arhangelskii1998}.

The property of being a Grothendieck space is transferable from a dense subspace to the full space. This result appears in \cite{Arhangelskii1997a}.

\begin{thm}
\label{thm:densesubspace}
	Let $Y$ be a dense subspace of $X$. If\, $Y$ is Grothendieck, so is $X$.
\end{thm}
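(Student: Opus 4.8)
The plan is to prove that $X$ is \groth, i.e.\ that $C_p(X)$ is a hereditary $g$-space (angelic), by verifying the two conditions furnished by Proposition \ref{prop:gspace}: that $C_p(X)$ is a $g$-space and that every compact subspace of $C_p(X)$ is \frech. The whole argument rests on the restriction map $\pi_Y \colon C_p(X) \to C_p(Y)$ of Lemma \ref{lem:rstrct}. Since $Y$ is dense in $X$, two continuous functions on $X$ agreeing on $Y$ coincide, so $\pi_Y$ is a continuous \emph{bijection} onto its image, and by Lemma \ref{lem:rstrct}(ii) that image is dense in $C_p(Y)$. The single most useful consequence is that $\pi_Y$ restricts to a homeomorphism on every compact subset of $C_p(X)$, being a continuous bijection from a compact space onto a Hausdorff one.

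The \frech condition on compacta is the easy half. Let $K \subseteq C_p(X)$ be compact. Then $\pi_Y(K) \subseteq C_p(Y)$ is compact, and $\pi_Y|_K \colon K \to \pi_Y(K)$ is a homeomorphism by the remark above. Since $Y$ is \groth, $C_p(Y)$ is a hereditary $g$-space, so by Proposition \ref{prop:gspace} its compact subspace $\pi_Y(K)$ is \frech; as being \frech is preserved by homeomorphisms, $K$ is \frech. Thus every compact subspace of $C_p(X)$ is \frech.

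For the $g$-space condition, let $A \subseteq C_p(X)$ be countably compact in $C_p(X)$; we must show $\clsr{A}$ (taken in $C_p(X)$) is compact. First, countable compactness in $C_p(X)$ forces $A$ to be pointwise bounded (an infinite subset on which some evaluation is unbounded has no limit point in $C_p(X)$), so the closure of $A$ computed in $\reals^X$ is compact by Tychonoff. A standard fact then makes our goal equivalent to showing that every cluster point $g \in \clsr{A}^{\reals^X}$ is continuous, for then $\clsr{A}^{\reals^X} \subseteq C_p(X)$ and this set is the desired compact closure. Pushing forward, $\pi_Y(A)$ is countably compact in $C_p(Y)$ (using injectivity of $\pi_Y$ to transport infinite subsets and their limit points), so, $C_p(Y)$ being a $g$-space, $K_0 := \clsr{\pi_Y(A)}^{C_p(Y)}$ is compact and, by angelicity, \frech. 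Continuity of $\pi_Y$ gives $g|_Y = \pi_Y(g) \in K_0 \subseteq C_p(Y)$, so at least the restriction $g|_Y$ is continuous; moreover, applying the \frech property of $K_0$ to obtain a sequence $f_n \in A$ with $f_n|_Y \to g|_Y$ and then using countable compactness of $A$ in $C_p(X)$ to extract a limit point of $\{f_n\}$, one sees that the (unique) continuous extension of $g|_Y$ to $X$ is itself a member of $\clsr{A}^{\reals^X}$.

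The hard part is precisely the last inference: upgrading ``$g|_Y$ is continuous'' to ``$g$ is continuous'', equivalently showing that $\pi_Y$ is injective on $\clsr{A}^{\reals^X}$, i.e.\ that no discontinuous cluster point shares its $Y$-restriction with the continuous extension just produced. Controlling the values of $g$ at points of $X \setminus Y$ cannot be done from the behaviour on $Y$ by density alone --- a pointwise limit of continuous functions need not be continuous even when its restriction to a dense set is. This is where the double limit conditions of K\"onig and Kuhn enter: using the \frech-ness of $K_0$ to replace $A$ by a single sequence converging on $Y$, the continuity of the individual $f_n$, and the density of $Y$ to approximate points of $X$ by points of $Y$, one sets up an iterated-limit interchange whose validity over $Y$ (guaranteed by $Y$ being \groth) must be transferred to $X$. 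Verifying that this interchange is legitimate --- that the two iterated limits agree for sequences drawn from $A$ and points drawn from all of $X$ --- is the crux of the proof and the step I expect to demand the real work.
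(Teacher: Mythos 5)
Your decomposition via Proposition \ref{prop:gspace} is sensible, and the first half is fine: compact $K \subseteq C_p(X)$ maps homeomorphically under $\pi_Y$ onto a compact subspace of $C_p(Y)$, which is \frech by angelicity of $C_p(Y)$. The intermediate steps of the second half are also correct: a set $A$ countably compact in $C_p(X)$ is pointwise bounded, $\pi_Y(A)$ is countably compact in $C_p(Y)$ (Lemma \ref{lem:ccimages}), so $K_0 = \clsr{\pi_Y(A)}$ is compact and \frech, every cluster point $g \in \clsr{A}^{\reals^X}$ has $g|_Y \in K_0$, and a limit point $h \in C_p(X)$ of a sequence $f_n \in A$ with $f_n|_Y \to g|_Y$ is a continuous function with $h|_Y = g|_Y$. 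But at that point you stop: the step ``therefore $g = h$,'' i.e.\ that $\pi_Y$ is injective on $\clsr{A}^{\reals^X}$, is exactly the content of the theorem, and you explicitly defer it (``the step I expect to demand the real work''). Everything you do prove is soft; a proof that ends by naming the crux is not a proof.

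Moreover, the route you gesture at does not go through as described. The interchange of iterated limits that Grothendieckness of $Y$ buys you (via relative compactness in $C_p(Y)$, as in Theorem \ref{thm:doublelimit}) is only valid when the sequence of \emph{points} converges in $Y$; but the witnesses to a putative discontinuity of $g$ at $x_0 \in X \setminus Y$ form a sequence $(y_m)$ in $Y$ whose relevant limit is $x_0 \notin Y$, and for non-convergent sequences in $Y$ no interchange holds (take $Y$ discrete: every pointwise-bounded closed set in $C_p(Y) = \reals^Y$ is compact, yet iterated limits disagree freely). Compare the classical proof of Grothendieck's theorem \cite{Grothendieck1952}, where countable compactness of the underlying space produces a cluster point $x^*$ of the witnesses \emph{inside the space}, at which the two limit computations collide; in your setting nothing plays the role of $x^*$, since the $y_m$ need not accumulate at $x_0$ or anywhere else, and the neighbourhood filter of $x_0$ is not countably generated, so you cannot force them to. Supplying a substitute for this anchor is precisely \arhan's contribution; note that the paper itself does not reprove this theorem but cites \cite{Arhangelskii1997a}, so the gap you leave open is the entire theorem, not a routine verification.
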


\arhan \cite{Arhangelskii1997a} proved the preservation of Grothendieck spaces under continuous images:

\begin{thm}
\label{thm:contimage}
	The continuous image of a Grothendieck space is Grothendieck.
\end{thm}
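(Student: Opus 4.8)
The plan is to pass to the dual map on function spaces and realize $C_p(Y)$ as a subspace of $C_p(X)$. First I would write the continuous image as a continuous surjection $f : X \to Y$ (replacing $Y$ by $f(X)$ if necessary; under the standing assumption that all spaces here are completely regular and Hausdorff, $Y$ is of the right kind). I would then consider the dual map $f^{*} : C_p(Y) \to C_p(X)$ defined by $f^{*}(g) = g \circ f$. Continuity of $f$ guarantees $g \circ f \in C_p(X)$, so $f^{*}$ is well defined, and the whole proof reduces to showing that $f^{*}$ is a homeomorphism of $C_p(Y)$ onto its image in $C_p(X)$.

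Next I would check that $f^{*}$ is an \emph{embedding}. Injectivity is where surjectivity of $f$ enters: if $g_1 \circ f = g_2 \circ f$, then $g_1$ and $g_2$ agree on $f(X) = Y$, so $g_1 = g_2$. Continuity of $f^{*}$ is immediate from the pointwise nature of both topologies, since a restriction on $f^{*}(g)$ at a point $x \in X$ is exactly a restriction on $g$ at the single point $f(x) \in Y$; thus preimages of subbasic open sets are open. The real content is continuity of the inverse on the image $f^{*}\!\left(C_p(Y)\right)$: for each $y \in Y$ choose (again using surjectivity) a point $x_y \in X$ with $f(x_y) = y$, so that for every $g \in C_p(Y)$ one has $g(y) = (g \circ f)(x_y) = f^{*}(g)(x_y)$. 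Hence evaluation at $y$ on $C_p(Y)$ equals evaluation at $x_y$ composed with $f^{*}$, which shows that the coordinate functionals generating the topology of $C_p(Y)$ are continuous along $(f^{*})^{-1}$. Therefore $f^{*}$ is an embedding and $C_p(Y)$ is homeomorphic to a subspace of $C_p(X)$.

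Finally I would invoke the hypothesis. Since $X$ is Grothendieck, $C_p(X)$ is a hereditary $g$-space, meaning every subspace of $C_p(X)$ is a $g$-space. This property is trivially inherited by subspaces: any subspace of $f^{*}\!\left(C_p(Y)\right)$ is a subspace of $C_p(X)$, hence a $g$-space, so $f^{*}\!\left(C_p(Y)\right)$ is itself a hereditary $g$-space, and so is its homeomorphic copy $C_p(Y)$. That $C_p(Y)$ is a hereditary $g$-space is precisely the assertion that $Y$ is Grothendieck.

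The only step demanding care is verifying that $f^{*}$ is an embedding rather than merely a continuous bijection onto its image; this is exactly where surjectivity of $f$ is used, via the choice of preimages $x_y$. Everything else is formal, once one notes that the hereditary $g$-space property is closed under passing to arbitrary subspaces.
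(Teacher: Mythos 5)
Your proof is correct, and it is essentially the standard argument: the paper itself offers no proof of this theorem, simply citing Arhangel'skii, and the argument in that source is the same dual-map argument you give. All the key steps check out: $f^{*}(g)=g\circ f$ is well defined by continuity of $f$; injectivity and the continuity of $(f^{*})^{-1}$ on the image both use surjectivity of $f$ (the latter via the choice of preimages $x_y$, so that evaluation at $y$ factors through evaluation at $x_y$); continuity of $f^{*}$ itself follows from the subbasic evaluation functionals; and the hereditary $g$-space property passes to arbitrary subspaces by transitivity of the subspace topology, so $C_p(Y)\cong f^{*}\left(C_p(Y)\right)\subseteq C_p(X)$ is a hereditary $g$-space, i.e., $Y$ is Grothendieck. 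One merit of writing the proof out as you do is that it makes visible exactly where hereditariness is indispensable: the image $f^{*}\left(C_p(Y)\right)$ is merely a subspace of $C_p(X)$, and the $g$-space property alone is not subspace-hereditary. This is precisely the point of the remark following the theorem in the paper, that the argument does not survive if ``Grothendieck'' is weakened to ``weakly Grothendieck'' --- indeed every space is a continuous image of a discrete (hence weakly Grothendieck) space.
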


The previous two results rely on $C_p(X)$ being a hereditary $g$-space, so the same proof does not work if we merely assume that the space is weakly Grothendieck. Indeed, every space is a continuous image of a discrete---hence weakly \groth---space, but not every space is weakly \groth.

\subsection{Grothendieck spaces}
\begin{defi}\label{def:pSspaces}
	Let $X$ be a topological space.
	\begin{enumerate}[label=(\alph*)]
		\item $X$ is a \emph{\lind $p$-space} if and only if it can be perfectly mapped onto a space with a countable base. Recall that a map is \emph{perfect} if it is continuous, closed, and the preimages of points are compact.

		\item $X$ is a \emph{\lind $\Sigma$-space} if and only if it is the continuous image of a \lind $p$-space.
	\end{enumerate}
\end{defi}

These are not the original definitions but rather, equivalent ones. The definitions of $p$-spaces and $\Sigma$-spaces are quite complicated.

The class of spaces which are \groth has been widely studied by \arhan \cite{Arhangelskii1997a}who proved that
\begin{thm}\label{thm:lindSgroth}
	All \lind $\Sigma$-spaces are \groth.
\end{thm}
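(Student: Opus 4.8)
The plan is to reduce the $\Sigma$ case to the $p$ case and then attack the $p$ case directly through a double limit argument.

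First I would reduce to \lind $p$-spaces. By Definition \ref{def:pSspaces}, every \lind $\Sigma$-space is the continuous image of a \lind $p$-space, so Theorem \ref{thm:contimage} (continuous images of \groth spaces are \groth) shows it is enough to prove that every \lind $p$-space is \groth. Accordingly, fix a \lind $p$-space $X$ together with a perfect map $f \colon X \to M$ onto a space $M$ with a countable base $\{B_n : n \in \omega\}$. Since $f$ is perfect, each fiber $K_m = f^{-1}(m)$ is compact; and since $f$ is closed, for every open $U \supseteq K_m$ there is some $B_n \ni m$ with $K_m \subseteq f^{-1}(B_n) \subseteq U$ (take $B_n$ disjoint from the closed set $f(X \setminus U)$). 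Thus $\{f^{-1}(B_n) : n \in \omega\}$ is a countable network subordinate to the compact cover $\{K_m : m \in M\}$; this is exactly the structure that lets one run \groth-type double limit arguments fiber by fiber while keeping the bookkeeping countable.

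To show that $C_p(X)$ is a hereditary $g$-space I would invoke Proposition \ref{prop:gspace} and verify two things: that $C_p(X)$ is a $g$-space, and that every compact $K \subseteq C_p(X)$ is \frech. Both follow from a single mechanism. Given $A \subseteq C_p(X)$ that is countably compact in $C_p(X)$ (for the $g$-space property), or a compact $K \subseteq C_p(X)$ together with a point $p \in \clsr{D}$ for some $D \subseteq K$ (for the \frech property), one first observes that $A$ (resp.\ $K$) is pointwise bounded, since each evaluation orbit is a countably compact subset of $\reals$ and hence bounded. Consequently the closure in $\reals^X$ is compact, and the task reduces to showing that every function $g$ in that closure is continuous on $X$, and---in the \frech case---reachable by a sequence drawn from $D$. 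Here \groth's Theorem applies to each compact fiber $K_m$, yielding the double limit condition on $K_m$; the countable network $\{f^{-1}(B_n)\}$ together with the \lind property of $X$ then lets me amalgamate these fiberwise conclusions into continuity of $g$ on all of $X$, and to diagonalize out an actual convergent sequence in the \frech case.

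The main obstacle is precisely this amalgamation step. \groth's Theorem as stated demands that the domain be countably compact, which $X$ is not, so the genuine work lies in transferring the double limit machinery from the compact fibers to the whole space using only the countable network modulo compacta. This is where the K\"onig--Kuhn double limit conditions do the heavy lifting, combining the fiberwise limits along a countable cofinal skeleton of the network with the \lind covering property to control the behaviour of $g$ across fibers. I expect everything else---the reduction via Theorem \ref{thm:contimage}, pointwise boundedness, and the extraction of the $\reals^X$-closure---to be routine, and this transfer to be the one genuinely technical point.
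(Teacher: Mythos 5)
The paper itself contains no proof of Theorem \ref{thm:lindSgroth}: it quotes the result from \arhan \cite{Arhangelskii1997a}, refers to \cite{Hamelb} for a reasonably self-contained proof, and explicitly warns that the proof ``is not easy.'' Measured against that known argument, your opening moves are sound: by Definition \ref{def:pSspaces} and Theorem \ref{thm:contimage} it does suffice to treat \lind $p$-spaces, and your preliminary observations (compactness of the fibers $K_m$, the pulled-back base $\{f^{-1}(B_n)\}$ forming a network around the fibers, pointwise boundedness of countably compact subsets of $C_p(X)$ and compactness of their closures in $\reals^X$) are all correct.

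Everything after that, however, is a gap rather than a proof. The entire content of the theorem has been compressed into the ``amalgamation'' step, which you defer as ``the one genuinely technical point'' and never carry out; and as designed it cannot work, because restricting functions to the fibers of the perfect map retains essentially no information. Take $X$ separable metrizable and $f$ the identity (a perfect map onto a second countable space): every fiber is a singleton, so \emph{every} $g \in \reals^X$ satisfies your fiberwise double limit conclusions vacuously, and the whole burden falls on the unexplained claim that a countable base plus Lindel\"ofness upgrades this to continuity of $g$ --- which is exactly the statement to be proved, not something the K\"onig--Kuhn conditions hand you. The actual proof (in \cite{Arhangelskii1997a} and \cite{Hamelb}, whose ingredients this paper recalls in the discussion before Theorem \ref{thm:stightwgroth} and again in Section 6) works on the function-space side instead of fiber by fiber: (a) all finite powers of a \lind $\Sigma$-space are \lind, which yields countable tightness of $C_p(X)$; and (b) \lind $\Sigma$-spaces are $\aleph_0$-stable, which yields $\aleph_0$-monolithicity of $C_p(X)$. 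Neither ingredient appears in your proposal. In particular, your \frech step (``diagonalize out an actual convergent sequence'') is a second, independent hard point that no double limit condition addresses: compact subspaces of function spaces are not \frech in general, and in the \lind $\Sigma$ case the sequence is produced by monolithicity --- given $p \in \clsr{D}$ with $D$ inside a compact $K \subseteq C_p(X)$, countable tightness gives a countable $D' \subseteq D$ with $p \in \clsr{D'}$, and $\clsr{D'}$ is then compact with countable network weight, hence metrizable, whence a convergent sequence exists. Without the tightness and monolithicity machinery, the proposal has no mechanism for either half of Proposition \ref{prop:gspace}.
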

\lind $\Sigma$-spaces constitute a nice class as they can be also described as follows: The class of \lind $\Sigma$-spaces is the smallest class of spaces containing all compact spaces, all second countable spaces, and that is closed under finite products, closed subspaces, and continuous images (\cite{Tkachuk2010} contains a detailed exposition of these facts).

Examples of \lind $\Sigma$-spaces include compact spaces, Polish spaces, and $K$\textit{-analytic spaces} (see \cite{Tkachuk2010}). The proof that all \lind $\Sigma$-spaces are \groth is not easy. A reasonably self-contained proof can be found in \cite{Hamelb}. It turns out, however, that for the particular applications to undefinability of Banach spaces that we are mainly interested in, we only need to prove that \emph{countable} spaces are \groth! The proof of this is quite easy so we give it below. The reason we can get away with this trivial case is that we mainly deal with countable languages, such as first-order continuous logic and countable fragments of continuous $\mcal{L}_{\omega_1, \omega}$.

\begin{defi}
  A topological space $X$ is \emph{$\sigma$-compact} if it can be written as a countable union of compact subspaces. $X$ is \emph{k-separable} if it has a dense $\sigma$-compact subspace.
\end{defi}

The following theorem collects some well-known (to $C_p$-theorists) results on conditions that imply Grothendieck:

\begin{thm}\label{whatgrot}
	A topological space $X$ is a \groth space if it satisfies any of the following:
	\begin{enumerate}[label=(\roman*)]
		\item $X$ has a dense countably compact subspace.
		\item $X$ is $k$-separable.
		\item \label{groth3}$X$ has a dense \lind $\Sigma$-space.
	\end{enumerate}
\end{thm}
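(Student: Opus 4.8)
The plan is to reduce all three cases to a single principle, namely Theorem \ref{thm:densesubspace}, that being \groth passes from a dense subspace to the whole space. So in each case I would exhibit a dense subspace $Y \subseteq X$ and show that $Y$ itself is \groth; the conclusion for $X$ then follows immediately by density. This turns the problem into three assertions about the dense witnesses provided by the hypotheses.

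For (iii) there is essentially nothing to do: the hypothesis hands me a dense \lind $\Sigma$-subspace $Y$, which is \groth by Theorem \ref{thm:lindSgroth}, and Theorem \ref{thm:densesubspace} finishes the job. My first genuine task is to see that (ii) is a special case of (iii), i.e. that every $\sigma$-compact space is a \lind $\Sigma$-space. Given a dense $\sigma$-compact subspace $Y = \bigcup_{n<\omega} K_n$ with each $K_n$ compact, I would form the topological sum $S = \bigoplus_{n<\omega} K_n$. Then $S$ is \lind, being a countable union of compact (hence \lind) pieces, and the projection $p \colon S \to \omega$ collapsing each copy $K_n$ to the point $n$ of the discrete space $\omega$ is perfect: it is continuous, its point-preimages are the compact $K_n$, and it is closed since every subset of discrete $\omega$ is closed. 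Hence $S$ is a \lind $p$-space. The obvious map $S \to Y$, which on each summand is the inclusion $K_n \hookrightarrow X$, is a continuous surjection, so $Y$ is a continuous image of a \lind $p$-space, i.e. a \lind $\Sigma$-space. Thus (ii) collapses to (iii).

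Case (i) is the one demanding real work, because the form of Grothendieck's Theorem quoted above yields only that $C_p(Y)$ is a $g$-space, i.e. that a countably compact $Y$ is \emph{weakly} \groth, whereas I need the stronger conclusion that $C_p(Y)$ is a \emph{hereditary} $g$-space. Here I would invoke Proposition \ref{prop:gspace}: since $C_p(Y)$ is already known to be a $g$-space, it suffices to prove that every compact subspace $K \subseteq C_p(Y)$ is \frech. This Fr\'echet–Urysohn step is the main obstacle. I would settle it by the angelic half of Grothendieck's classical theorem for countably compact domains: given $A \subseteq K$ and $f \in \clsr{A}$, one extracts a sequence from $A$ converging to $f$, using that continuous functions on the countably compact (hence pseudocompact) space $Y$ are bounded together with Grothendieck's double-limit interchange criterion for relative compactness in $C_p(Y)$. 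Once every compact subspace of $C_p(Y)$ is seen to be \frech, Proposition \ref{prop:gspace} upgrades the $g$-space property to a hereditary one, so $Y$ is \groth; as $Y$ is dense in $X$, Theorem \ref{thm:densesubspace} once more delivers that $X$ is \groth, completing the argument.
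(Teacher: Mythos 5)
Your proposal is correct and follows essentially the same route as the paper: all three cases are funneled through Theorem \ref{thm:densesubspace}, case (iii) is exactly Theorem \ref{thm:lindSgroth} plus density, case (ii) is reduced to (iii) via the fact that $\sigma$-compact spaces are \lind $\Sigma$-spaces (which you prove by the sum-over-$\omega$ construction, where the paper simply cites \cite{Tkachuk2010}), and case (i) in both treatments ultimately rests on the classical strengthening of \groth's Theorem for countably compact domains --- the paper cites Pryce \cite{Pryce1971} directly, while you reach the same result by first reducing, via Proposition \ref{prop:gspace}, to the \frech property of compact subsets of $C_p(Y)$ and then invoking the angelic form of the classical theorem. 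The only substantive caveat is that your sketch of that last step (extracting a convergent sequence from boundedness plus a double-limit criterion) is not itself a proof --- the angelic property is genuinely stronger than the relative-compactness equivalence --- so it should be read, like the paper's appeal to Pryce, as a citation rather than an argument.
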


\begin{proof}\mbox{}
\begin{enumerate}[label=(\roman*)]
	\item \groth's Theorem implies that countably compact spaces are weakly \groth. Pryce \cite{Pryce1971} proved that they are indeed \groth. The result follows from Theorem \ref{thm:densesubspace}.

	\item Follows from \ref{groth3} (see \cite{{Tkachuk2010}}) and Theorem \ref{thm:densesubspace}.

	\item This follows immediately from Theorems \ref{thm:densesubspace} and \ref{thm:lindSgroth}. \qedhere
\end{enumerate}
\end{proof}

The preceding results demonstrate that \groth spaces are wide-ranging and well-behaved. They will probably find other applications in model theory. However, we have realized that for our purposes, the much weaker notion of weak Grothendieckness often suffices. 

\begin{defi}
 The \emph{tightness $t(X)$ of $X$} is the smallest infinite cardinal such that whenever $A \subseteq X$ and $x \in \clsr{A}$, there exists a $B \subseteq A$ such that $\crd{B} \leq t(X)$ and $x \in \clsr{B}$. When $t(X)=\aleph_0$, we say that $X$ is \textit{countably tight}.
\end{defi}

Countable tightness plays an important role in the study of \groth spaces, e.g., in the proof that \lind $\Sigma$-spaces are \groth. There, it is the countable tightness of $C_p(X)$ that is important, and this is obtained from the fact that finite (in fact, countable) products of \lind $\Sigma$-spaces are \lind, see \cite{ArhangelskiiFunction} or \cite{Hamelb}. For us here, the crucial fact is that

\begin{thm}\label{thm:stightwgroth}
	Countably tight spaces are weakly \groth. 	
\end{thm}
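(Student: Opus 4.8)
The plan is to verify directly that $X$ weakly \groth, i.e. that $C_p(X)$ is a $g$-space. So I fix an $A \subseteq C_p(X)$ that is countably compact in $C_p(X)$ and aim to show $\clsr{A}$ is compact in $C_p(X)$. The argument splits into a soft part carried out in $\reals^X$ and a hard part where countable tightness is actually used.

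First I would record that $A$ is pointwise bounded. If $\{f(x) : f \in A\}$ were unbounded for some $x$, I could pick distinct $f_n \in A$ with $\abs{f_n(x)} \to \infty$; then for any $h$ the basic neighbourhood $W(h; x; 1)$ meets $\{f_n : n \in \omega\}$ in a finite set, so this infinite subset of $A$ has no limit point in $C_p(X)$, contradicting countable compactness. Pointwise boundedness places $A$ inside a product $\prod_{x} [-M_x, M_x]$ of compact intervals, so $\clsr{A}^{\reals^X}$ is compact by Tychonoff. Since $A \subseteq C_p(X)$ gives $\clsr{A}^{C_p(X)} = \clsr{A}^{\reals^X} \cap C_p(X)$, it suffices to prove the inclusion $\clsr{A}^{\reals^X} \subseteq C_p(X)$: then the two closures coincide and $\clsr{A}$ is the compact set $\clsr{A}^{\reals^X}$.

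The hard part, and the main obstacle, is exactly this inclusion, i.e. showing that every $g \in \clsr{A}^{\reals^X}$ is continuous, since a pointwise limit of continuous functions is continuous only under extra hypotheses on $X$. This is where countable tightness enters. Suppose some such $g$ is discontinuous; then $g(\clsr{B}) \not\subseteq \clsr{g(B)}$ for some $B \subseteq X$, so there are $x_0 \in \clsr{B}$ and $\varepsilon > 0$ with $\abs{g(x_0) - g(b)} \geq \varepsilon$ for all $b \in B$. Using countable tightness I replace $B$ by a countable $B_0 \subseteq B$ with $x_0 \in \clsr{B_0}$, retaining $\abs{g(x_0) - g(b)} \geq \varepsilon$ for all $b \in B_0$. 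Setting $Y = B_0 \cup \{x_0\}$, a countable subspace of $X$, the function $\rstrct{g}{Y}$ is visibly discontinuous at $x_0$, since every $Y$-neighbourhood of $x_0$ meets $B_0$.

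To reach a contradiction I would push everything through the restriction map $\pi_Y : C_p(X) \to C_p(Y)$ of Lemma \ref{lem:rstrct}. As $\pi_Y$ is continuous, $\pi_Y(A)$ is countably compact in $C_p(Y)$: given an infinite $S \subseteq \pi_Y(A)$, a choice of distinct preimages in $A$ has a limit point $a^*$ in $C_p(X)$ by hypothesis, and $\pi_Y(a^*)$ is then a limit point of $S$. Because $Y$ is countable it is $\sigma$-compact, hence $k$-separable, hence \groth by Theorem \ref{whatgrot}(ii); in particular $C_p(Y)$ is a $g$-space. Therefore $\pi_Y(A)$ is relatively compact in $C_p(Y)$, so $\clsr{\pi_Y(A)}^{C_p(Y)}$ is compact, thus closed in $\reals^Y$, and hence contains $\clsr{\pi_Y(A)}^{\reals^Y}$. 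Since the coordinate projection $\reals^X \to \reals^Y$ is continuous and carries $\clsr{A}^{\reals^X}$ into $\clsr{\pi_Y(A)}^{\reals^Y}$, we get $\rstrct{g}{Y} \in \clsr{\pi_Y(A)}^{C_p(Y)} \subseteq C_p(Y)$, i.e. $\rstrct{g}{Y}$ is continuous --- contradicting the previous paragraph. This forces $\clsr{A}^{\reals^X} \subseteq C_p(X)$, so $A$ is relatively compact in $C_p(X)$, proving that $C_p(X)$ is a $g$-space and $X$ is weakly \groth.
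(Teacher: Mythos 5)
Your proof is correct and is essentially the paper's own argument: both establish that the closure of $A$ in $\reals^X$ is compact, then show every function in that closure is continuous by restricting to countable subspaces, where the \groth property of countable (hence separable) spaces yields compactness of the restricted closure and forces the restriction to be continuous. The only differences are organizational --- you run the continuity step contrapositively, inlining the countable-tightness reduction that the paper isolates as Lemma \ref{lem:ctrest}, and you restrict to the countable witness set $Y$ itself (invoking Theorem \ref{whatgrot} or Theorem \ref{thm:grothcntbl} for $Y$) rather than to $\clsr{Y}$ as the paper does.
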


Theorem \ref{thm:stightwgroth} is proved in \cite{Arhangelskii1997a} as a part of a more general result. We will prove the specific result here. A different proof appears in \cite{TallGrot}.\\

\begin{proof}
  We need a series of easy lemmas.

\begin{lem}
	In a countably tight space, a set is closed if and only if its intersection with every separable closed set is closed.
\end{lem}

\begin{proof}
	Let $X$ be a countably tight space, and $H \subseteq X$. If $H$ is not closed, there is an $h \in \clsr{H}\setminus H$ and a countable $S \subseteq H$ such that $h \in \clsr{S}$. Then $H \cap \clsr{S}$ is not closed.
\end{proof}

\begin{lem}\label{lem:ctrest}
	A mapping from a countably tight space $X$ to a space $Z$ is continuous if and only if for any countable $T \subseteq X$, the restriction $f|_{\clsr{T}}$ is continuous.
\end{lem}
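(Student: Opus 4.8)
The plan is to prove Lemma \ref{lem:ctrest}, the characterization of continuity for maps out of a countably tight space in terms of their restrictions to closures of countable sets. The forward direction is immediate: if $f \colon X \to Z$ is continuous, then its restriction to any subspace is continuous, so in particular $f|_{\clsr{T}}$ is continuous for every countable $T \subseteq X$. The real content is the converse, and the natural strategy is to verify continuity of $f$ by showing it preserves adherence, i.e., that $f(\clsr{H}) \subseteq \clsr{f(H)}$ for every $H \subseteq X$ (equivalently, that preimages of closed sets are closed).

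For the converse, suppose every restriction $f|_{\clsr{T}}$ (with $T$ countable) is continuous, and let $H \subseteq X$ with $x \in \clsr{H}$. By countable tightness of $X$, there is a countable $T \subseteq H$ with $x \in \clsr{T}$. Now $f|_{\clsr{T}}$ is continuous by hypothesis, and $x \in \clsr{T}$ with $T \subseteq \clsr{T}$, so applying continuity of the restriction gives $f(x) \in \clsr[\clsr{T}]{f(T)}$, the closure taken inside $\clsr{T}$. Since $\clsr{T}$ carries the subspace topology, this closure is contained in the closure of $f(T)$ in $Z$, hence $f(x) \in \clsr{f(T)} \subseteq \clsr{f(H)}$. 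This shows $f(\clsr{H}) \subseteq \clsr{f(H)}$ for all $H$, which is equivalent to continuity of $f$, completing the argument.

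I would then use this lemma, together with the preceding lemma on closed sets in countably tight spaces, to assemble the proof that countably tight spaces are weakly \groth. The idea there is that $C_p(X)$ being a $g$-space reduces, via the restriction-of-continuity criterion, to a statement about countable subsets, where Grothendieck's Theorem (the $\sigma$-compact or countable case) can be applied directly. The main obstacle in Lemma \ref{lem:ctrest} itself is merely bookkeeping about which space the closures are being taken in: one must be careful that the closure of $f(T)$ computed inside the subspace $\clsr{T}$ injects correctly into the closure computed in the ambient space $Z$, but this is exactly the transitivity of closure under the subspace topology and presents no genuine difficulty.
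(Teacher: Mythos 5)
Your proof is correct and follows essentially the same route as the paper's: use countable tightness to find a countable $T$ with $x \in \clsr{T}$, then apply continuity of $f|_{\clsr{T}}$ to land $f(x)$ in the appropriate closure. The only cosmetic difference is that you verify continuity via $f(\clsr{H}) \subseteq \clsr{f(H)}$ while the paper checks that preimages of closed sets are closed; these are dual formulations of the identical argument.
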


\begin{proof}
	Take any closed $B \subseteq Y$. We claim that $f^{-1}(B)$ is closed. To see this, suppose $x \in \clsr{f^{-1}(B)}$. Then there is a countable $S \subseteq f^{-1}(B)$ such that $x \in \clsr{S}$. Then $f(x)$ is in the closure of $(f|_{\clsr{S}})(\clsr{S})$ in $\clsr{f(S)}$ by continuity of $f|_{\clsr{S}}$. But $f(S) \subseteq B$, which is closed, so $f(x) \in B$, so $x \in f^{-1}(B)$, which is what we wanted to show.
\end{proof}

\begin{lem}\label{lem:ccimages}
	Suppose $Z$ is countably compact in $X$, and $f: X \to Y$ is continuous. Then $f(Z)$ is countably compact in $Y$.
\end{lem}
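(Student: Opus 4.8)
The plan is to unwind the definition of ``countably compact in'' from Definition \ref{ccpt}\ref{def:cntcpct}: I must show that every infinite subset $W$ of $f(Z)$ has a limit point in $Y$, where a limit point of $W$ is a $y \in Y$ every open neighborhood of which meets $W$ in infinitely many points. First I would reduce to the countable case: if $W \subseteq f(Z)$ is infinite, choose a countably infinite subset $W_0 = \st{w_n}{n \in \omega}$ with the $w_n$ pairwise distinct. Any limit point of $W_0$ is automatically a limit point of $W$, since a neighborhood meeting $W_0$ infinitely often meets the larger set $W$ infinitely often as well. So it suffices to produce a limit point of $W_0$.

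The heart of the argument is to pull $W_0$ back into $Z$. For each $n$ pick some $z_n \in Z$ with $f(z_n) = w_n$; because the values $w_n$ are pairwise distinct, the chosen preimages $z_n$ are pairwise distinct too, so $\st{z_n}{n \in \omega}$ is a genuinely infinite subset of $Z$. Since $Z$ is countably compact in $X$, this infinite set has a limit point $x \in X$, meaning every open set about $x$ contains infinitely many of the $z_n$.

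I then claim $y := f(x)$ is the desired limit point of $W_0$ in $Y$. Given any open $V \ni y$, continuity of $f$ makes $f^{-1}(V)$ an open set about $x$, so $f^{-1}(V)$ contains infinitely many $z_n$; for each such $n$ we have $w_n = f(z_n) \in V$, and since the $w_n$ are distinct this yields infinitely many points of $W_0$ in $V$. Hence $y$ is a limit point of $W_0$, and therefore of $W$, completing the proof.

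This is one of the ``easy lemmas'' advertised before the proof of Theorem \ref{thm:stightwgroth}, and I do not expect a genuine obstacle. The only points requiring a moment's care are purely combinatorial bookkeeping: ensuring the chosen preimages stay distinct (so that countable compactness of $Z$ in $X$ actually applies to an infinite set) and confirming that ``infinitely many indices'' translates into ``infinitely many distinct image points'' in $V$. Both are immediate once the $w_n$ are taken distinct at the outset, which is why the initial reduction to a countable set of distinct points is the organizing step.
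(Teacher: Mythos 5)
Your proof is correct and follows essentially the same route as the paper's: pick one preimage in $Z$ for each point of the infinite image set, use countable compactness of $Z$ in $X$ to get a limit point $p$, and push it forward by continuity to get $f(p)$ as a limit point of the image. The only difference is your preliminary reduction to a countable set of distinct points, which is harmless (and your explicit bookkeeping of ``infinitely many distinct image points'' is slightly more careful than the paper's terse version of the same step).
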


\begin{proof}
	Suppose $B$ is an infinite subset of $f(Z)$. Let $Z'$ be obtained by picking one point from each $f^{-1}(\{b\})$, for each $b \in B$. Then $Z'$ is infinite and so has a limit point $p \in X$. Then $f(p)$ is a limit point of $B$, since if $U$ is an open set about $f(p)$, then $f^{-1}(U)$ is an open set about $p$ and so contains some $a \in Z'$. Then $U$ contains $f(a)$, which is an element of $B$. 
\end{proof}

\begin{lem}\label{clsrcountablycmpct}
    If $X$ is a normal space and $A\subseteq X$ is countably compact in $X$, then $\clsr{A}$ is countably compact. 
\end{lem}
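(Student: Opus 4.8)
The plan is to argue by contradiction, reducing the countable compactness of $\clsr{A}$ to the hypothesis on $A$ by means of a \emph{discrete} open expansion. Suppose $\clsr{A}$ is not countably compact. Since an infinite set contains a countably infinite subset, and a limit point of a subset is a limit point of the whole set, it suffices to produce a countably infinite $B = \{b_n : n \in \omega\} \subseteq \clsr{A}$ with no limit point in $\clsr{A}$; the failure of countable compactness hands us exactly such a $B$. First I would observe that any limit point of $B$ automatically lies in $\clsr{B} \subseteq \clsr{A}$, so that $B$ having no limit point in $\clsr{A}$ is the same as $B$ having no limit point anywhere in $X$. In other words, $B$ is a \emph{closed discrete} subset of $X$.

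The key step is to exploit normality to separate $B$ by a \emph{discrete} family of open sets (this is the fact that normal spaces are countably collectionwise normal). As a subspace $B$ is discrete, so the map $b_n \mapsto n$ is continuous on $B$; and $B$ is closed in $X$, so by the Tietze extension theorem it extends to a continuous $G \colon X \to \reals$. Pulling back the intervals $I_n = \left(n - \frac{1}{4},\, n + \frac{1}{4}\right)$, $n \in \omega$, which form a discrete family in $\reals$, yields open sets $U_n = G^{-1}(I_n)$ with $b_n \in U_n$; since the continuous preimage of a discrete family is again discrete, $\{U_n : n \in \omega\}$ is a discrete family in $X$. I expect this passage --- producing a \emph{discrete} (not merely disjoint) expansion --- to be the main obstacle, and it is exactly where normality is used; mere disjointness of the $U_n$ would not suffice, since disjoint open sets may still accumulate at a common point.

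Finally I would derive the contradiction. Because each $b_n$ belongs to $\clsr{A}$ and $U_n$ is an open neighbourhood of $b_n$, we have $U_n \cap A \neq \emptyset$, so we may choose $a_n \in U_n \cap A$; the $a_n$ are distinct as the $U_n$ are disjoint, giving an infinite subset $\{a_n : n \in \omega\} \subseteq A$. A limit point $p$ of $\{a_n\}$ would have every neighbourhood meeting infinitely many $U_n$, contradicting the discreteness of $\{U_n\}$; hence $\{a_n\}$ has no limit point in $X$. This contradicts the hypothesis that $A$ is countably compact in $X$, completing the proof.
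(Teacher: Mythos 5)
Your proof is correct, and its overall skeleton matches the paper's: extract a countably infinite closed discrete $B=\{b_n : n<\omega\}\subseteq\clsr{A}$, expand it to a \emph{discrete} family of open sets $U_n\ni b_n$, pick $a_n\in U_n\cap A$ (possible since $b_n\in\clsr{A}$), and contradict the countable compactness of $A$ in $X$. Where you genuinely diverge is in how the discrete expansion is produced. The paper does it by two bare-hands applications of normality: first it separates each $b_n$ from the closed set $\{b_k : k\neq n\}$ to get disjoint open sets (after intersecting, $U_n = V_n\cap\bigcap_{k<n}V'_k$), and then it separates the closed set $B$ from $X\setminus\bigcup_n U_n$ by an open $U\supseteq B$, so that the family $\{U\cap U_n\}$ becomes discrete rather than merely disjoint. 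You instead invoke the Tietze extension theorem on the continuous map $b_n\mapsto n$ (using that $B$ is closed in $X$ and discrete as a subspace) and pull back the discrete family of intervals $\left(n-\tfrac14, n+\tfrac14\right)$; since continuous preimages of discrete families are discrete, this yields the expansion in one stroke. Your route is shorter and cleanly isolates the real content (countable closed discrete subsets of normal spaces admit discrete open expansions), at the cost of citing a heavier theorem; the paper's route is longer but entirely self-contained, using nothing beyond the definition of normality. You are also right to flag, as the paper's argument implicitly does, that disjointness alone would not suffice --- disjoint open sets can still accumulate at a point, which is exactly why the second separation step (or your discreteness-of-preimages observation) is needed.
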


\begin{proof}
    Suppose that $\clsr{A}$ is not countably compact. Then there is a countably infinite closed discrete subset $B$ of $\clsr{A}$. Let $B = \{b_n\}_{n<\omega}$. It suffices to find disjoint open sets $\{U_n\}_{n < \omega}$, $b_n \in U_n$, for then we can use normality to obtain disjoint open sets $U, W$, $U\supseteq B$, $V\supseteq X\setminus \Union_{n < \omega}U_n$. Then the open sets $\{U \cap U_n\}_{n < \omega}$ form a discrete collection, i.e., each point in $X$ is at most one of these. Take $a_n \in U\cap U_n \cap A$. Then $\{a_n : n < \omega\}$ has no limit points in $X$, contradicting $A$ being countably compact in $X$. To find the required disjoint open sets, by normality find disjoint open sets $V_n$ containing $b_n$ and $V'_n \supseteq \{b_k : k\neq n\}$. Let $U_n = V_n \cap \Insect\{V'_k : k < n\}$.
\end{proof}

\textit{Proof of Theorem \ref{thm:stightwgroth}.}
	Let $X$ be a countably tight space, and let $A$ be countably compact in $C_p(X)$. Since $C_p(X)$ is canonically embedded in $\reals^X$, the set $A$ is also a subspace of $\reals^X$. By Lemma \ref{lem:ccimages}, the images of $A$ under the natural projections of $\reals^X$ onto $\reals$ are also countably compact in $\reals$. Since $\reals$ is metrizable, it is normal, so the images of $A$ have countably compact closures, but then by metrizability they have compact closures. Since $A$ is included in the topological product of these projections, the closure of $A$ in $\reals^X$ is a compact subspace $F$ of $\reals^X$.

	It remains to show $F \subseteq C_p(X)$, i.e., each member $f$ of $F$ is continuous. By Lemma \ref{lem:ctrest}, it suffices to show that for every countable $Y \subseteq X$ the restriction $f|_{\clsr{Y}}$ is continuous. The restriction map $r: \reals^X \to \reals^{\clsr{Y}}$ sends $A$ to a dense subset of $r(F)$, since $r$ is continuous and $A$ is dense in $F$. Since $r$ is continuous, $r(A)$ is countably compact in $C_p(\clsr{Y})$. As a separable space, $Y$ is \groth by Theorem \ref{whatgrot} (or by Theorem \ref{thm:grothcntbl} plus Theorem \ref{thm:densesubspace}) and so the closure of $r(A)$ in $C_p(\clsr{Y})$ is compact. But then that closure must equal $r(F)$, since, by the continuity of $r$, $r(F)$ is the closure of $r(A)$ in $\reals^{\clsr{Y}}$, so is the smallest closed set including $r(A)$ there. Thus, for each $f \in F$ and each countable $Y \subseteq X$, the restriction $f|_{\clsr{Y}}$ is an element of $C_p(\clsr{Y})$, i.e., $f|_{\clsr{Y}}$ is continuous. Since $X$ is a countably tight space, it follows that $f$ is continuous.
\end{proof}

As we said, all we will need in order to prove our definability result is

\begin{thm}\label{thm:grothcntbl}
	Every countable space is \groth.
\end{thm}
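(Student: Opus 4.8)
The plan is to reduce everything to the single observation that, for countable $X$, the space $C_p(X)$ is metrizable, and then to show that metrizable spaces are hereditary $g$-spaces (equivalently, angelic), which by definition says precisely that $X$ is \groth.

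First I would invoke Theorem~\ref{thm:crds}: since $X$ is countable, $w(C_p(X)) = \crd{X} = \aleph_0$, so $C_p(X)$ is second countable. Even more directly, $C_p(X)$ is a subspace of the product $\reals^X = \reals^\omega$, which is metrizable as a countable product of metric spaces; hence $C_p(X)$ is metrizable (alternatively, combine second countability with regularity and apply Urysohn metrization).

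Next I would show that every metrizable space $M$ is a $g$-space. Suppose $A \subseteq M$ is countably compact in $M$. Metric spaces are normal, so by Lemma~\ref{clsrcountablycmpct} the closure $\clsr{A}$ is countably compact. But a countably compact subset of a metric space is compact, since in metric spaces countable compactness coincides with (sequential) compactness. Hence $\clsr{A}$ is compact, i.e.\ $A$ is relatively compact, so $M$ is a $g$-space. Because every subspace of a metric space is again metric, this already shows that metrizable spaces are hereditary $g$-spaces; alternatively, having established that $C_p(X)$ is a $g$-space, one may invoke Proposition~\ref{prop:gspace}, noting that every compact subspace of the metric space $C_p(X)$ is \frech, as all metric spaces are. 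Either route yields that $C_p(X)$ is a hereditary $g$-space, and therefore $X$ is \groth.

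The only genuinely delicate point is the passage from ``$A$ is countably compact in $C_p(X)$'' --- where, per Definition~\ref{ccpt}, the limit points are required to lie only in the ambient space --- to the compactness of the closure $\clsr{A}$. The naive hope that a relatively countably compact set automatically has countably compact closure fails in general, as the remark following Definition~\ref{ccpt} warns; this is exactly why Lemma~\ref{clsrcountablycmpct} together with the normality of metric spaces is needed. Once $\clsr{A}$ is known to be countably compact, metrizability upgrades this to compactness for free, and the rest of the argument is routine.
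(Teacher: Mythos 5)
Your proof is correct and follows essentially the same route as the paper's: both invoke Theorem~\ref{thm:crds} to see that $C_p(X)$ is small (second countable/metrizable), reduce the hereditary $g$-space property to the $g$-space property via heredity of that smallness, and then combine normality with Lemma~\ref{clsrcountablycmpct} to get a countably compact closure. The only cosmetic difference is the final upgrade to compactness: the paper uses that $C_p(X)$ is \lind, while you use that countably compact metric spaces are compact --- an interchangeable step.
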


\begin{proof}
	By Theorem \ref{thm:crds}, if $X$ is countable, $C_p(X)$ has a countable base. Having a countable base is hereditary, so we need only show that a space with a countable base is a $g$-space. A space with a countable base is separable metrizable, so normal, so by Lemma \ref{clsrcountablycmpct}, if $A$ is countably compact in $C_p(X)$, then $\clsr{A}$ is countably compact. But $C_p(X)$ is \lind, so $\clsr{A}$ is compact.
\end{proof}

\subsection{Grothendieck Spaces and Double Limit Conditions}
The relationship of \groth's Theorem to double limit conditions was explored by V.~Pt\'ak \cite{Ptak1963}:

\begin{thm}
	Let $X$ be compact and $A \subseteq C_p(X)$ pointwise bounded. Then $A$ is relatively compact in $C_p(X)$ if and only if the following double limit condition holds: whenever $\seq{x}{n}{n < \omega}$ is a sequence in $X$ and $\seq{f}{m}{m<\omega}$ is a sequence in $A$, the double limits $\lim_{n \to \infty}\lim_{m \to \infty}f_m(x_n)$ and $\lim_{m \to \infty}\lim_{n \to \infty}f_m(x_n)$ are equal whenever they exist.
\end{thm}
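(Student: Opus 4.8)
The plan is to prove both implications separately. The forward implication (relative compactness $\Rightarrow$ double limit condition) is a short argument built on compactness and continuity, while the reverse implication will require \groth's classical diagonal ``interchange of limits'' construction, which I expect to be the main obstacle.

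For the forward direction, suppose $A$ is relatively compact in $C_p(X)$, so that $\overline{A}$ is a compact subset of $C_p(X)$ and in particular consists of continuous functions. Given sequences $(x_n)$ in $X$ and $(f_m)$ in $A$ for which both iterated limits exist, I would first use compactness of $X$ to fix a cluster point $x \in X$ of $(x_n)$, and compactness of $\overline{A}$ in $C_p(X)$ to fix a cluster point $f \in \overline{A} \subseteq C(X)$ of $(f_m)$. Writing $c_n = \lim_m f_m(x_n)$ and $d_m = \lim_n f_m(x_n)$ for the inner limits, I would note that since $f$ is a pointwise cluster point of $(f_m)$ we have $f(x_n) = c_n$ for each $n$ (a convergent real sequence has a unique cluster value), so $\lim_n f(x_n) = \lim_n c_n$; and since $f$ is continuous and $x$ is a cluster point of $(x_n)$, we get $f(x) = \lim_n c_n$. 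Symmetrically, continuity of each $f_m$ gives $f_m(x) = d_m$, and $f$ being a pointwise cluster point of $(f_m)$ at the single point $x$ gives $f(x) = \lim_m d_m$. Hence both iterated limits equal $f(x)$, and are therefore equal.

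For the reverse direction, assume the double limit condition. Since $A$ is pointwise bounded and $X$ is compact, Tychonoff's theorem makes the closure $F$ of $A$ in $\reals^X$ compact; moreover $A$ is relatively compact in $C_p(X)$ precisely when $F \subseteq C(X)$, so it suffices to show every $g \in F$ is continuous. I would argue by contradiction: if some $g \in F$ is discontinuous at a point $a$, there is $\varepsilon > 0$ such that every neighborhood of $a$ contains a point $y$ with $|g(y) - g(a)| \ge \varepsilon$. I would then build, by induction on $n$, functions $f_n \in A$, shrinking open neighborhoods $U_n \ni a$, and points $x_n \in X$: choose $f_n \in A$ with $|f_n(x_j) - g(x_j)| < 1/n$ for all $j < n$ (possible since $g \in F = \overline{A}$ and only finitely many points are involved); choose $U_n \subseteq U_{n-1}$ with $|f_k(y) - f_k(a)| < 1/n$ for all $y \in U_n$ and all $k \le n$ (possible by continuity of the finitely many $f_k$); and choose $x_n \in U_n$ with $|g(x_n) - g(a)| \ge \varepsilon$ (possible by discontinuity of $g$ at $a$).

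With these choices, for each fixed $n$ the relation $|f_m(x_n) - g(x_n)| < 1/m$ for $m > n$ yields $\lim_m f_m(x_n) = g(x_n)$, while for each fixed $m$ the containment $x_n \in U_n \subseteq U_m$ for $n \ge m$ yields $\lim_n f_m(x_n) = f_m(a)$, with $f_m(a) \to g(a)$. Thus $\lim_m \lim_n f_m(x_n) = g(a)$, whereas $\lim_n \lim_m f_m(x_n) = \lim_n g(x_n)$. The main obstacle is guaranteeing that this last outer limit exists and is different from $g(a)$: I would handle it by extracting, from the values $g(x_n)$ which stay at distance $\ge \varepsilon$ from $g(a)$, a cluster value $\ell$ with $|\ell - g(a)| \ge \varepsilon$, and refining the inductive choice of $x_n$ (passing to a subsequence if necessary, which preserves the relation $\lim_n f_m(x_n) = f_m(a)$) so that additionally $g(x_n) \to \ell$. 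Then the two iterated limits are $\ell$ and $g(a)$, which are unequal, contradicting the double limit condition. This forces $F \subseteq C(X)$, and hence the relative compactness of $A$ in $C_p(X)$.
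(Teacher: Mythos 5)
The paper itself offers no proof to compare against: it states this result as Pt\'ak's theorem and simply cites \cite{Ptak1963}, so your proposal must be judged on its own merits.

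Your forward direction is correct and complete: fixing a cluster point $x$ of $\seq{x}{n}{n<\omega}$ in the compact space $X$ and a cluster point $f$ of $\seq{f}{m}{m<\omega}$ in the compact set $\clsr{A}\subseteq C_p(X)$, each of your four identifications is a legitimate use of the fact that a convergent real sequence has its limit as its unique cluster value, together with continuity of $f$ and of each $f_m$; both iterated limits then equal $f(x)$.

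The reverse direction has a genuine gap, and it sits exactly where you planted your flag: the extraction of the cluster value $\ell$. Pointwise boundedness of $A$ bounds $\st{f(x_n)}{f\in A}$ for each \emph{fixed} $n$ by some $M_{x_n}$, but these bounds may tend to infinity with $n$; since $g$ is only a pointwise limit of members of $A$, it can be unbounded on every neighbourhood of $a$, and the discontinuity of $g$ at $a$ may be witnessed \emph{only} by points where $\abs{g}$ blows up. Then $(g(x_n))_n$ has no cluster value in $\reals$, the outer limit $\lim_n\lim_m f_m(x_n)$ cannot be made to exist, and the double limit condition---which constrains only configurations where \emph{both} iterated limits exist finitely---yields no contradiction. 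This is not a removable technicality: with only pointwise boundedness the implication is actually false. Take $X=\{0\}\cup\st{1/n}{n\geq 1}$ and $f_m(0)=0$, $f_m(1/n)=n$ for $n\leq m$, $f_m(1/n)=0$ for $n>m$. Then $A=\st{f_m}{m\geq 1}$ is pointwise bounded, and its pointwise closure adds only the discontinuous $g$ with $g(0)=0$, $g(1/n)=n$, so $A$ is not relatively compact in $C_p(X)$; yet the double limit condition holds: if the indices $m_k$ of a sequence from $A$ are unbounded, the inner limits over $k$ equal $g(y_j)$, and for their limit over $j$ to exist these integer values must be eventually constant, forcing $y_j$ to be eventually constant (as $g$ is injective), after which the two iterated limits coincide; if the $m_k$ are bounded, every function appearing infinitely often must take the same value at each $y_j$, and again both iterated limits equal the limit of those common values.

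So your argument can only be completed under a \emph{uniform} bound on $A$---which is how the classical Grothendieck--Pt\'ak criterion is stated, and is the situation the paper actually uses later, working in $C_p(X,[0,1])$. With a uniform bound, $(g(x_n))_n$ lies in a fixed compact interval, your cluster-value extraction and passage to a subsequence are legitimate, and the construction succeeds, modulo one small repair: when choosing $f_n$ you must also interpolate at the point $a$, i.e.\ require $\abs{f_n(a)-g(a)}<1/n$ alongside the conditions at $x_j$, $j<n$; otherwise your claim that $f_m(a)\to g(a)$, which is what makes $\lim_m\lim_n f_m(x_n)=g(a)$, has no justification.
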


Note that if $A$ is countably compact in $C_p(X)$, then it is pointwise bounded. The relationship among double limit conditions and countable compactness and compactness of function spaces was further explored by several authors, most notably H.~K\"onig and N.~Kuhn \cite{Koenig1987}. A topological variant of such sequential double limits was motivated by model-theoretic considerations and will appear frequently in what follows.

\begin{defi}
	Let $X$ be a topological space. Given an ultrafilter $\mcal{U}$ on a regular cardinal $\kappa$, and a $\kappa$-sequence $\seq{x}{\alpha}{\alpha < \kappa}$ in $X$, we say that
	\begin{linenomath*} 
	\[\lim_{\alpha \to \mcal{U}}x_\alpha = x\]
	\end{linenomath*} 
	if and only if for every open neighbourhood $U$ of $x$, $\st{\alpha < \kappa}{x_\alpha \in U} \in \mcal{U}$.
\end{defi}

It follows from the fact that an ultrafilter cannot contain disjoint sets that if such an ultralimit exists in a Hausdorff space, it is unique. The following result relates ultralimits and compactness:

\begin{thmr}[\cite{Iovino1999,Hamelb}]\label{ultracomp}
	A space $X$ is compact if and only if every ultralimit in $X$ exists.
\end{thmr}

In the following sections we shall present generalizations of the main results of \cite{Casazza} by working with more general spaces rather than just compact spaces. For this purpose we introduce some definitions which are different from the ones in \cite{Casazza} but identical with the added assumption of compactness.

\section{Stability, Definability, and Double (Ultra)limit Conditions}
Thorough introductions to continuous logic can be found in \cite{Eagle2014}, \cite{Eagle2015} and \cite{BenYaacov2008}. In \cite{Hamel}, the authors introduced in detail a setting for continuous logic, mostly following \cite{Casazza} and \cite{Eagle2015}. It is worth noting that there are different definitions of continuous logics in the literature. However, most authors differ only slightly from one another. We shall now introduce \emph{metric structures} following Eagle \cite{Eagle2014} and \cite{Eagle2015} as this is the context we shall work in:

\begin{defi}\mbox{}
\begin{enumerate}[label=(\alph*)]
	\item If $(M,d)$ and $(N,\rho)$ are metric spaces and $f\colon M^n\to N$ is uniformly continuous, a \textit{modulus of uniform continuity of} $f$ is a function 
    $\delta \colon (0,1)\cap{\mathbb{Q}} \to (0,1)\cap{\mathbb{Q}}$ such that whenever $\boldsymbol{a}=(a_1,...,a_n),\ \boldsymbol{b}=(b_1,...,b_n)\in M^n$ and $\varepsilon \in (0,1)\ \cap\ {\mathbb{Q}}$, $\sup\{d(a_i,b_i) : 1\leq i \leq n\}<\delta(\varepsilon)$ implies
    $\rho(f(\boldsymbol{a}),f(\boldsymbol{b}))<\varepsilon$. Similarly define a modulus of uniform continuity for a predicate.

	\item A \emph{language} for metric structures is a set $L$ which consists of constants, functions with an associated arity, and a modulus of uniform continuity; predicates with an associated arity and a modulus of uniform continuity; and a symbol $d$ for a metric.

	\item An \emph{$L$-metric structure} $\mfrak{M}$ is a metric space $(M, d^M)$ together with interpretations for each symbol in $L$: $c^{\mfrak{M}} \in M$ for each constant $c \in L$; $f^{\mfrak{M}}: M^n \to M$ a uniformly continuous function for each $n$-ary function symbol $f \in L$; $P^{\mfrak{M}}: M^n \to [0, 1]$ a uniformly continuous function for each $n$-ary predicate symbol $P \in L$. Assume for the sake of notational simplicity that all metric structures have diameter $1$. 

	\item The space of $L$-structures in a given language $L$ is the family $\mathrm{Str}(L)$ of all $L$-metric structures endowed with the topology generated by the following basic closed sets: $[\vphi] = \st{\mfrak{M}}{\mfrak{M} \ent \vphi}$ where $\vphi$ is an $L$-sentence.
\end{enumerate}
\end{defi}

\begin{rmk*}
  We should of course consider equivalence classes as we did in Definition \ref{def:eqclass}, but we instead follow the practice of model theorists who work in this arena. The reader should rest assured that the sloppiness can be fixed.
\end{rmk*}

\begin{rmk*}
	The continuous first-order formulas are constructed just as in the discrete setting except for the following addition: if $f: [0, 1]^n \to [0, 1]$ is a continuous function and $\vphi_0, \ldots, \vphi_{n-1}$ are $L$-formulas, then $f(\vphi_0, \ldots, \vphi_{n-1})$ is also an $L$-formula. It is customary to identify formulas $\vphi$ of arity $n$ with functions $\mf{M}^n \to [0, 1]$. This allows an easy way to define the satisfaction relation, i.e., if $\vphi$ is an $L$-formula, $\mf{M}$ an $L$-structure, and $a \in \mf{M}^n$, then $\mf{M} \ent \vphi(a)$ if and only if $\vphi(a) = 1$. For example, under this identification, the conjunction, $\vphi \lnd \psi$, of two $L$-formulas $\vphi$ and $\psi$ is $\max(\vphi, \psi)$. In what follows, we shall introduce other continuous logics and study Gowers' problem in them; our main example is continuous $\mcal{L}_{\omega_1, \omega}$ as introduced by Eagle \cite{Eagle2014}, \cite{Eagle2015}. See \cite{Eagle2014}, \cite{Eagle2015} and \cite{Hamel} for a detailed exposition.
\end{rmk*}

Now we will tie model theory and $C_p$-theory together and greatly improve the results of \cite{Casazza}, \cite{Hamel}, and \cite{Hamelb}. A key technical concept introduced in \cite{Casazza} and used to talk about a pair of Banach space norms is the following:

\begin{defi}\mbox{}
\begin{enumerate}[label=(\alph*)]
	\item Let $L$ be a language $L' \supseteq L$ is a \emph{language for pairs of structures from $L$}, if $L'$ includes two disjoint copies of $L$ and there is a map $\mr{Str}(L) \times \mr{Str}(L) \to \mr{Str}(L')$ which assigns to every pair of $L$-structures $\mf{M}, \mf{N}$ an $L'$-structure $\langle\mf{M}, \mf{N} \rangle$.

	\item Let $L'$ be a language for pairs of structures from $L$, and $X, Y$ function symbols from $L$. We say that a formula $\vphi(X, Y)$ is a \emph{formula for pairs of structures from $L$} if
	\begin{linenomath*}  
	\[(\mf{M}, \mf{N}) \mapsto \vphi(X^{\mf{M}}, Y^{\mf{N}}) = \mr{Val}\left(\vphi(X, Y), \langle\mf{M}, \mf{N} \rangle\right)\]
	\end{linenomath*} 
	is separately continuous on $\mr{Str}(L) \times \mr{Str}(L)$. For simplicity, we write $\vphi(\mf{M}, \mf{N})$ instead of $\mr{Val}\left(\vphi(X, Y), \langle\mf{M}, \mf{N} \rangle\right)$.
\end{enumerate}
\end{defi}

In this section we shall only be interested in $\vphi$-types when $\vphi$ is a formula for pairs of structures:

\begin{defi}
	Let $L$ be a language, $\vphi$ an $L$-formula for pairs of structures, and $\mf{M} \in \mr{Str}(L)$. The \emph{left $\vphi$-type of $\mf{M}$} is the function $\mr{ltp}_{\vphi, \mf{M}}: \mr{Str}(L) \to [0, 1]$ given by $\mr{ltp}_{\vphi, \mf{M}}(\mf{N}) = \vphi(\mf{M}, \mf{N})$.

	The \emph{space of left $\vphi$-types}, denoted $S^{l}_{\vphi}$ is the closure of $\st{\mr{ltp}_{\vphi, \mf{M}}}{\mf{M} \in \mr{Str}(L)}$ in $C_p(\mr{Str}(L), [0, 1])$. If $C$ is a subset of $\mr{Str}(L)$, then $S^{l}_{\vphi}(C)$ is the closure of $\st{\rstrct{\mr{ltp}_{\vphi, \mf{M}}}{C}}{\mf{M} \in C}$ in $C_p(C, [0, 1])$ and it is called the \emph{space of left $\vphi$-types over $C$}.

	The definitions for \emph{right $\vphi$-types} are analogous.
\end{defi}

Notice that the previous definition coincides with the classical one, that appears, for instance, as Definition 2.5 in \cite{Iovino1999}, and with the one given in \cite{Casazza} for the compact case: if the logic is compact, that is to say $\mr{Str}(L)$ is compact, then, using \ref{ultracomp} on $\mr{Str}(L)$ and the separate continuity of $\varphi$, $S^l_{\vphi}$ turns out to be the closure of $\{\mr{ltp}_{\vphi, \mf{M}} : \mf{M} \in \mr{Str}(L)\}$ in $[0, 1]^{\mr{Str}(L)}$, which is then also compact. Since compact spaces are \groth, we have that $S^l_{\vphi}$ is always Fr\'echet-Urysohn when the logic is compact---see Proposition \ref{prop:gspace}. 

The alternative approach to talk about (left) $\varphi$-types in the compact case which is widely available in the literature is by considering maximal satisfiable sets of formulas of the form $\varphi(x,-)$. See for instance Definition \ref{classictype} or the complete presentation in Unit 4.2 of \cite{TentZiegler}. This approach makes it harder for us to exploit the topological richness of these spaces of continuous functions. However, one can always recover the $\varphi$-type in this other classical sense from our approach: take the inverse image $(\mr{ltp}_{\vphi, \mf{M}})^{-1}\{1\}$, as this will yield the set of all $\mf{N}$ for which $\vphi(\mf{M}, \mf{N})$ holds.

For the sake of completeness, we include a short proof of the compactness of the space of left $\varphi$-types for a compact logic. 

\begin{prop}
If a logic is compact (i.e. $\strl$ is compact), then $S^{l}_{\vphi}$ is compact.
\end{prop}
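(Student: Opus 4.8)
The plan is to show that the set being closed, namely $D = \st{\mr{ltp}_{\vphi, \mf{M}}}{\mf{M} \in \strl}$, is already compact; then its closure $S^l_\vphi$ in $C_p(\strl, [0,1])$ equals $D$ and is therefore compact. First I would introduce the evaluation map $e \colon \strl \to C_p(\strl, [0,1])$ defined by $e(\mf{M}) = \mr{ltp}_{\vphi, \mf{M}}$. This is well defined: for fixed $\mf{M}$ the function $\mf{N} \mapsto \vphi(\mf{M}, \mf{N})$ is continuous by the separate continuity of $\vphi$ in its second variable, so each $\mr{ltp}_{\vphi, \mf{M}}$ genuinely lies in $C_p(\strl, [0,1])$, and $e[\strl] = D$.

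The crux of the argument is that $e$ is continuous. The topology on $C_p(\strl, [0,1])$ is the subspace topology inherited from the product $[0,1]^{\strl}$, i.e.\ the topology of pointwise convergence, and a map into a product is continuous if and only if each of its coordinate functions is continuous. The coordinate of $e$ indexed by $\mf{N}$ is $\mf{M} \mapsto \vphi(\mf{M}, \mf{N})$, which is continuous precisely by the separate continuity of $\vphi$ in its \emph{first} variable. Hence $e$ is continuous into $[0,1]^{\strl}$, and since its image lands in $C_p(\strl,[0,1])$, it is continuous into that subspace as well.

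I would then finish as follows. As $\strl$ is compact (the hypothesis) and $e$ is continuous, $D = e[\strl]$ is compact. The space $C_p(\strl, [0,1])$ is Hausdorff, being a subspace of the Hausdorff product $[0,1]^{\strl}$, and a compact subset of a Hausdorff space is closed; thus $D$ is closed in $C_p(\strl, [0,1])$. Consequently $S^l_\vphi = \overline{D} = D$, which is compact.

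The only substantive point is the continuity of $e$, which rests entirely on the fact that pointwise convergence means coordinatewise continuity, so I do not anticipate a serious obstacle. An alternative route, matching the remarks preceding the statement, is to show directly that the closure of $D$ in $C_p(\strl,[0,1])$ coincides with its closure in $[0,1]^{\strl}$ (the latter being compact by Tychonoff): given $g$ in the product-closure of $D$, realize it as an ultralimit $g(\mf{N}) = \lim_{i \to \mcal{U}} \vphi(\mf{M}_i, \mf{N})$; by Theorem \ref{ultracomp} the compactness of $\strl$ produces $\mf{M}_\infty = \lim_{i \to \mcal{U}} \mf{M}_i \in \strl$, and separate continuity together with the uniqueness of ultralimits in the Hausdorff space $[0,1]$ forces $g(\mf{N}) = \vphi(\mf{M}_\infty, \mf{N})$, so $g = \mr{ltp}_{\vphi, \mf{M}_\infty} \in D$. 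In that approach the mild technical obstacle is converting an arbitrary net witnessing membership in the closure into an ultralimit indexed by a regular cardinal, as required by the definition of $\lim_{\alpha \to \mcal{U}}$.
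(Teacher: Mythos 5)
Your primary argument is correct, and it takes a genuinely different---and more elementary---route than the paper; your ``alternative route'' at the end is, in fact, essentially the paper's own proof. The paper argues exactly as you sketch there: it takes a limit point $f$ of $\st{\mr{ltp}_{\vphi,\mf{M}}}{\mf{M}\in\strl}$, writes it as an ultralimit $\lim_{\alpha\to\mcal{U}}\mr{ltp}_{\vphi,\mf{M}_\alpha}$ over an ultrafilter on a cardinal, uses compactness of $\strl$ (via Theorem \ref{ultracomp}) to produce $\mf{M}=\lim_{\alpha\to\mcal{U}}\mf{M}_\alpha$, and then uses separate continuity in the first variable to conclude $f=\mr{ltp}_{\vphi,\mf{M}}$, so that the set of realized left types is closed in $[0,1]^{\strl}$ and hence compact. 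Your main route replaces this ultralimit bookkeeping with the single observation that the evaluation map $e(\mf{M})=\mr{ltp}_{\vphi,\mf{M}}$ is continuous (coordinatewise continuity into a product is precisely separate continuity of $\vphi$ in its first variable), so its image is a compact, hence closed, subset of the Hausdorff space $C_p(\strl,[0,1])$, and taking the closure adds nothing. Both proofs rest on the same two ingredients---compactness of $\strl$ and separate continuity in the first variable---and both deliver the same stronger conclusions: $S^{l}_{\vphi}$ equals the set of realized types, and its closure in $C_p(\strl,[0,1])$ agrees with the closure in $[0,1]^{\strl}$, a point the paper remarks on right after the proof. What your route buys is brevity and the avoidance of the technical step you flag yourself, namely converting an arbitrary point of the closure into an ultralimit indexed by a cardinal, which the paper asserts without justification. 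What the paper's route buys is an explicit description of the elements of $S^{l}_{\vphi}$ as realized types or ultralimits of them, a description the paper reuses in the discussion that follows, where it compares the compact and countably tight cases.
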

\begin{proof}
If $f$ is a limit point of $\{\mr{ltp}_{\vphi, \mf{M}} : \mf{M} \in \mr{Str}(L)\}$, then there is a cardinal $\kappa$, an ultrafilter $\mcal{U}$ over $\kappa$ and a sequence $\langle{\mf{M}_{\alpha} \colon\alpha<\kappa}\rangle$ in $\mr{Str}(L)$ such that $\lim_{\alpha\to\mcal{U}} \mr{ltp}_{\vphi, \mf{M_\alpha}}=f$. Since $\mr{Str}(L)$ is compact and $\varphi$ is separately continuous, there is an $\mf{M}$ such that $\lim_{\alpha\to\mcal{U}}\mf{M_\alpha}=\mf{M}$ and for any $\mf{N}\in{\mr{Str}(L)}$,
$\lim_{\alpha\to{\mcal{U}}}
\varphi(\mf{M}_{\alpha},\mf{N})=
\varphi(\mf{M},\mf{N})=\mr{ltp}_{\vphi, \mf{M}}\in S^{l}_{\vphi}$.
Then, $S^{l}_{\vphi}$ is a closed subset of 
$[0, 1]^{\mr{Str}(L)}$, and therefore compact.
\end{proof}

In fact, the definition of the space of $\varphi$-types in first-order logic appears sometimes in the literature, see \cite{Iovino1999} for instance, with the closure taken directly in $[0, 1]^{\mr{Str}(L)}$ since, as the end of the previous proof shows, this makes no difference in the first-order case.

For the purposes of this paper, we care about $\varphi$-types of the form $\mr{ltp}_{\vphi, \mf{M}}$, and the approximations we can do by taking ultralimits of $\kappa$-sequences of them. The proof of the previous proposition shows that all elements in $S^{l}_{\vphi}$ are of one of these two forms in the compact case. Similarly, these are the only two possibilities in the countably tight case: the proof of Theorem \ref{thm:doublelimit} below shows that every $\varphi$-type is of the form $\mr{ltp}_{\vphi, \mf{M}}$, or the ultralimit of a (countable) sequence of such elements. Even if a logic were sophisticated enough for $S^{l}_{\vphi}$ to contain elements of a different form, our results will still apply as they are only concerned with the two aforementioned different forms of $\varphi$-types. 

The notion of model-theoretic stability has been extensively studied in discrete model theory, whereas works like \cite{Casazza} or \cite{Hamel} are devoted to understanding stability in the continuous setting. There are many definitions of stability which are equivalent in case the logic under consideration is compact. We have found the one introduced in \cite{Iovino1999} involving a \emph{double ultralimit condition} to be the most suitable for the continuous setting, even when it comes to non-compact logics:

\begin{defi}
  Let $L$ be a language for pairs of structures, $\vphi$ a formula for pairs of structures, and $C \subseteq \mr{Str}(L)$. We say that $\vphi$ is \emph{stable on $C$} if and only if whenever $\seq{\mf{M}}{i}{i<\omega}$ and $\seq{\mf{N}}{j}{j<\omega}$ are sequences in $C$, and $\mcal{U}$ and $\mcal{V}$ are ultrafilters on $\omega$, we have
  \begin{linenomath*} 
  \[\lim_{i \to \mcal{U}}\lim_{j \to \mcal{V}}\vphi(\mf{M}_i, \mf{N}_j) = \lim_{j \to \mcal{V}}\lim_{i \to \mcal{U}}\vphi(\mf{M}_i, \mf{N}_j).\]
  \end{linenomath*} 
\end{defi}

We won't specifically use stability in this chapter, but this version of stability from first-order logic certainly motivated us. The reader familiar with the model-theoretic notion of stability will realize that the definition above is not the most common one. Usually, stability is presented combinatorially through the order property or in terms of the cardinalities of the space of types, see \cite{TentZiegler}. Any introductory textbook in Model Theory will include a long list of statements that are equivalent to stability in first-order logic; definability, for instance, is in such a list. These equivalences cease to hold once we move beyond compactness. In fact, the reason for our definition of choice for stability is that we are going to present a detailed analysis of the conditions that enable the equivalence between this definition -- via the double (ultra)limit condition -- and that of definability in logics that are not compact. We now state the topological backbone of one of our main results:

\begin{defi}
  Let $X$ be a topological space and $A$ a subset of $C_p(X, [0, 1])$. We write $\udlc(A, X)$, if 
\begin{quote}
  \label{DLC} for every pair of sequences $\seq{f}{n}{n < \omega} \subseteq A$ and $\seq{x}{m}{m < \omega} \subseteq X$, and ultrafilters $\mcal{U}$ and $\mcal{V}$ on $\omega$, the double limits
  \begin{linenomath*} 
  \[\lim_{n \to \mcal{U}}\lim_{m \to \mcal{V}}f_n(x_m) = \lim_{m \to \mcal{V}}\lim_{n \to \mcal{U}}f_n(x_m)\]
  \end{linenomath*} 
  agree whenever $\lim_{m \to \mcal{V}}x_m$ exists.
\end{quote} 
We write $\udlc(X)$ if $\udlc(A, X)$ holds for all $A \subseteq C_p(X, [0, 1])$. We say $X$ satisfies the \emph{double ultralimit condition} if for each $A \subseteq C_p(X, [0, 1])$, $\udlc(A, X)$ is equivalent to $A$ being relatively countably compact.
\end{defi}

The inspiration for the definition of the double ultralimit condition comes from the double limit condition used by Pták that we recast as follows:

\begin{defi}
  Let $X$ be a topological space and $A$ a subset of $C_p(X, [0, 1])$. We write $DLC(A, X)$, if 
\begin{quote}
  \label{DLCreg} for every pair of sequences $\seq{f}{n}{n < \omega} \subseteq A$ and $\seq{x}{m}{m < \omega} \subseteq X$, the double limits
  \begin{linenomath*} 
  \[\lim_{n \to \infty}\lim_{m \to \infty}f_n(x_m) = \lim_{m \to \infty}\lim_{n \to \infty}f_n(x_m)\]
  \end{linenomath*} 
  agree whenever the limits exist.
\end{quote} 
We write $DLC(X)$ if $DLC(A, X)$ holds for all $A \subseteq C_p(X, [0, 1])$. We say $X$ satisfies the \emph{double limit condition} if for each $A \subseteq C_p(X, [0, 1])$, $DLC(A, X)$ is equivalent to $A$ being relatively countably compact.
\end{defi}

As an immediate consequence of these definitions, we have

\begin{lem}
  If $X$ is weakly \groth and satisfies the double ultralimit condition, then a subset $A \subseteq C_p(X, [0, 1])$ satisfying $\udlc(A, X)$ is relatively compact.
\end{lem}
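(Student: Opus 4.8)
The plan is to chain the three hypotheses through the relevant definitions; the lemma is essentially a bookkeeping exercise, the only subtlety being the need to keep straight the various flavors of countable compactness in Definition~\ref{ccpt}. Accordingly I will make no computation at all and simply thread the implications
\[
\udlc(A,X) \ \Longrightarrow\ A \text{ rel.\ ctbly cpct} \ \Longrightarrow\ A \text{ ctbly cpct in } C_p(X) \ \Longrightarrow\ A \text{ rel.\ cpct.}
\]

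First I would use that $X$ satisfies the double ultralimit condition. By definition this says that for the set $A \subseteq C_p(X,[0,1])$ at hand, the property $\udlc(A,X)$ is equivalent to $A$ being relatively countably compact. Since we are assuming $\udlc(A,X)$, we obtain that $A$ is relatively countably compact, i.e., its closure $\clsr{A}$ is countably compact. Because $C_p(X,[0,1])$ is a closed subspace of $C_p(X)$, this closure is the same whether computed in $C_p(X,[0,1])$ or in $C_p(X)$, and it is a genuine subspace of $C_p(X)$. Next I would pass from ``$\clsr{A}$ is countably compact'' to ``$A$ is countably compact in $C_p(X)$''. This is the easy direction of the distinction flagged in the remark following Definition~\ref{ccpt}: any infinite subset of $A$ is in particular an infinite subset of the countably compact space $\clsr{A}$, so it has a limit point lying in $\clsr{A} \subseteq C_p(X)$; hence every infinite subset of $A$ has a limit point in $C_p(X)$, which is exactly the assertion that $A$ is countably compact in $C_p(X)$.

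Finally I would invoke weak Grothendieckness. By definition, $X$ being weakly \groth means that $C_p(X)$ is a $g$-space, and the defining property of a $g$-space is precisely that any subset which is countably compact in it is relatively compact. Applying this to $A$, now known to be countably compact in $C_p(X)$, yields that $\clsr{A}$ is compact, i.e., $A$ is relatively compact. Since $C_p(X,[0,1])$ is closed in $C_p(X)$, the closure $\clsr{A}$ sits inside $C_p(X,[0,1])$, so $A$ is relatively compact there as well, which is the desired conclusion.

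There is no genuinely hard step --- this is why the statement is an ``immediate consequence of these definitions.'' The one point requiring care is not to conflate \emph{relatively countably compact} (the closure is countably compact) with \emph{countably compact in $C_p(X)$} (every infinite subset has a limit point in the ambient space); as the remark after Definition~\ref{ccpt} warns, these two notions can diverge in general. However, the implication I actually use --- from the former to the latter --- always holds, since a countably compact closure supplies the needed limit points, and that is all the argument requires.
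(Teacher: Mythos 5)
Your proof is correct and is exactly the definition-unwinding the paper intends: the paper states this lemma without proof as ``an immediate consequence of these definitions,'' and your chain $\udlc(A,X) \Rightarrow$ relatively countably compact $\Rightarrow$ countably compact in $C_p(X) \Rightarrow$ relatively compact (via the $g$-space property of $C_p(X)$) is that immediate argument. Your care with the middle step is also well placed: the implication you use (countably compact closure supplies limit points in the ambient space) holds regardless of which of the two readings of ``relatively countably compact'' in Definition~\ref{ccpt} one adopts, so the argument is safe despite the paper's own ambiguity there.
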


For countably tight spaces, the ``if'' can be strengthened to ``if and only if'':

\begin{thm}
\label{thm:doublelimit}
	Let $X$ be countably tight. A subset $A$ of $C_p(X, [0, 1])$ is relatively compact in $C_p(X, [0, 1])$ if and only if $\udlc(A, X)$. 
\end{thm}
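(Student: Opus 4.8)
The plan is to prove the two implications separately; countable tightness is needed only for the reverse one.

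For the forward implication, suppose $A$ is relatively compact, so that its closure $\overline A$ in $C_p(X,[0,1])$ is compact. By Theorem~\ref{ultracomp} every ultralimit in $\overline A$ exists, so given $\langle f_n\rangle_{n<\omega}\subseteq A$ and an ultrafilter $\mcal U$ on $\omega$ the ultralimit $f=\lim_{n\to\mcal U}f_n$ exists in $\overline A\subseteq C_p(X,[0,1])$; in particular $f$ is continuous and, evaluation being continuous on $C_p$, $f(y)=\lim_{n\to\mcal U}f_n(y)$ for every $y$. Now take $\langle x_m\rangle_{m<\omega}\subseteq X$ with $\lim_{m\to\mcal V}x_m=x$. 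Using continuity of each $f_n$, the inner limit $\lim_{m\to\mcal V}f_n(x_m)$ equals $f_n(x)$, so $\lim_{n\to\mcal U}\lim_{m\to\mcal V}f_n(x_m)=\lim_{n\to\mcal U}f_n(x)=f(x)$; using continuity of $f$, the other order gives $\lim_{m\to\mcal V}\lim_{n\to\mcal U}f_n(x_m)=\lim_{m\to\mcal V}f(x_m)=f(x)$. Thus both double limits equal $f(x)$ and $\udlc(A,X)$ holds. This direction uses no hypothesis on $X$.

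For the reverse implication I would first reduce relative compactness to countable compactness. Since $X$ is countably tight it is weakly \groth by Theorem~\ref{thm:stightwgroth}, so $C_p(X)$ is a $g$-space, as is its closed subspace $C_p(X,[0,1])$; hence it suffices to show that $A$ is countably compact in $C_p(X,[0,1])$, for the $g$-space property then upgrades this to relative compactness. So fix a countably infinite $\{a_k\}_{k<\omega}\subseteq A$ and a nonprincipal ultrafilter $\mcal U$ on $\omega$, and define $f(x)=\lim_{k\to\mcal U}a_k(x)$ for each $x\in X$ (the limit exists since $[0,1]$ is compact). A routine check shows $f$ is a limit point of $\{a_k\}$ in $[0,1]^X$, so it remains only to show that $f$ is continuous, i.e.\ $f\in C_p(X,[0,1])$.

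The heart of the argument is to derive this continuity from $\udlc(A,X)$, and here countable tightness is essential. Suppose $f$ were discontinuous at a point $p$. Then for some $\epsilon>0$ the set $D=\{x\in X:\abs{f(x)-f(p)}\ge\epsilon\}$ has $p\in\clsr D$ while $p\notin D$. By countable tightness there is a countable $\{z_i\}_{i<\omega}\subseteq D$ with $p\in\clsr{\{z_i\}}$, and then the sets $\{i:z_i\in U\}$, for $U$ a neighbourhood of $p$, have the finite intersection property and extend to an ultrafilter $\mcal V$ on $\omega$ with $\lim_{i\to\mcal V}z_i=p$. Applying $\udlc(A,X)$ to $\langle a_k\rangle$, $\langle z_i\rangle$, $\mcal U$, $\mcal V$ (legitimate, since $\lim_{i\to\mcal V}z_i$ exists), and evaluating each inner limit by continuity of $a_k$ on the one side and by the definition of $f$ on the other, yields $f(p)=\lim_{i\to\mcal V}f(z_i)$; but $\abs{f(z_i)-f(p)}\ge\epsilon$ for all $i$ forces the ultralimit to stay $\epsilon$-far from $f(p)$, a contradiction. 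Hence $f$ is continuous.

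The main obstacle is precisely this last step: $\udlc$ only constrains double limits along sequences indexed by $\omega$ with ultrafilters on $\omega$, whereas discontinuity of $f$ is a priori witnessed only by an arbitrary net converging to $p$. Countable tightness is exactly what lets me replace that net by a single $\omega$-indexed sequence $\langle z_i\rangle$ inside the ``bad'' set $D$, turning the failure of continuity into a configuration that $\udlc$ can refute. Everything else---the existence of the relevant ultralimits in $[0,1]$ and the passage from countable compactness to relative compactness---is routine given Theorems~\ref{ultracomp} and \ref{thm:stightwgroth}. (Note that Lemma~\ref{lem:ctrest} offers an alternative packaging via restrictions to closures of countable sets, but the direct argument above avoids it.)
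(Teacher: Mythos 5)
Your proof is correct and takes essentially the same route as the paper's: the easy direction via ultralimits in the compact closure, and the hard direction via weak Grothendieckness of countably tight spaces (Theorem~\ref{thm:stightwgroth}) combined with exactly the paper's key move---using countable tightness to pick a countable set inside the set witnessing discontinuity of the ultralimit, extending the neighbourhood traces to an ultrafilter $\mcal{V}$, and refuting $\udlc(A,X)$. The only difference is organizational: you argue directly (every countably infinite subset of $A$ has a continuous ultralimit, hence $A$ is countably compact in $C_p(X,[0,1])$, hence relatively compact by the $g$-space property), whereas the paper runs the contrapositive and must first extract from $A$ a sequence with no continuous limit points---a step your arrangement sidesteps.
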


\begin{proof}
	Let $\clsr{A}$ denote the closure of $A$ in $[0, 1]^X$. Suppose $\clsr{A} \cap C_p(X)$ is not compact in $C_p(X)$. Then $\clsr{A}\, \cap\, C_p(X)$ is closed but it is not countably compact in $C_p(X)$, since $X$ is weakly \groth. Let $\seq{f}{n}{n < \omega}$ be a subset of $A$ with closure disjoint from $\clsr{A} \cap C_p(X)$. Since $\clsr{A}$ is a compact subset of $[0, 1]^X$, each ultralimit of the sequence $\seq{f}{n}{n < \omega}$ exists, and is discontinuous. Take a non-principal ultrafilter $\mcal{U}$ over $\omega$ and let $\lim_{n \to \mcal{U}}f_n = g$, where $g$ is discontinuous by assumption. Then there are $\epsi>0$ and $y \in X$ such that $y \in \clsr{Y}$, where
	\begin{linenomath*}  
	\[Y = X \setminus g^{-1}\left(g(y)-\epsi, g(y)+\epsi\right).\]
	\end{linenomath*} 
	Since $t(X) = \aleph_0$, there is some $Z \subseteq Y$ with $\crd{Z} = \aleph_0$ and $y \in \clsr{Z}$. Suppose $Z = \seq{x}{m}{m < \omega}$ and for each open neighbourhood $U$ of $y$, let $M_U = \st{m < \omega}{x_m \in U}$. Clearly, the family of all $M_U$ is centred (i.e. all finite subfamilies have non-empty intersections) and so it can be extended to an ultrafilter $\mcal{V}$ on $\omega$ so that
	\begin{linenomath*} 
	\[\lim_{n \to \mcal{U}}\lim_{m \to \mcal{V}}f_n(x_m) = g(y)\]
	\end{linenomath*} 
	since each $f_n$ is continuous. On the other hand,
	\begin{linenomath*} 
	\[\lim_{m \to \mcal{V}}\lim_{n \to \mcal{U}}f_n(x_m) = \lim_{m \to \mcal{V}}g(x_m)\]
	\end{linenomath*} 
	exists by compactness of $[0, 1]$. However, by the choice of each $x_m$, we have $\abs{g(y) - g(x_m)} > \epsi$ and so the ultralimits exist but are different, a contradiction.

	Conversely, suppose $\clsr{A}\, \cap\, C_p(X)$ is compact and $\lim_{m \to \mcal{V}}x_m = y$. Then for any sequence $\seq{f}{n}{n < \omega} \subseteq A$ and ultrafilter $\mcal{U}$ on $\omega$, there is a continuous function $g = \lim_{n \to \mcal{U}}f_n$. Thus,
	\begin{linenomath*} 
	\[\lim_{n \to \mcal{U}}\lim_{m \to \mcal{V}}f_n(x_m) = \lim_{n \to \mcal{U}}f_n(y) = g(y),\]
	\end{linenomath*} 
	and
	\begin{linenomath*} 
	\[\lim_{m \to \mcal{V}}\lim_{n \to \mcal{U}}f_n(x_m) = \lim_{m \to \mcal{V}}g(x_m) = g(y). \qedhere\]
	\end{linenomath*}
 Note we did not need countable tightness for this direction.
\end{proof}

The main application we consider here deals with countable fragments of $\mcal{L}_{\omega_1, \omega}$, for the corresponding space of types is Polish \cite{Morley1974}, so (\groth and) countably tight.

The reader will naturally wonder about the relationship between (sequential) double limit conditions and double ultralimit conditions. We will now provide an answer. We begin with an observation: if $\lim_{n\rightarrow\infty}a_n=L$, then $\lim_{n\rightarrow\mathcal{U}}a_n=L$ for any non-principal $\mathcal{U}$ on $\omega$. The converse is, of course, false. 

Let's state four conditions for a topological space $X$ and $A \subseteq C_p(X,[0,1])$. 

\begin{enumerate}[label=(\textbf{\Roman*})]
\item\label{relcpct} $A$ is relatively compact in $C_p(X, [0, 1])$.

\item\label{relcntbl} $A$ is relatively countably compact in $C_p(X, [0, 1])$.

\item\label{DLC} DLC($A, X$).

\item\label{UDLC} $\udlc(A, X)$.
\end{enumerate}

Pt\'ak proved \ref{relcpct} if and only if \ref{DLC} for $X$ compact, i.e. compact spaces satisfy the double limit condition. We proved \ref{relcpct} if and only if \ref{UDLC} for $X$ countably tight, i.e. countably tight spaces satisfy the double ultralimit condition. Of course we have \ref{relcpct} if and only if \ref{relcntbl} if $X$ is weakly \groth. 

\begin{thm}
    If $X$ satisfies the double limit condition, then $X$ satisfies the double ultralimit condition.
\end{thm}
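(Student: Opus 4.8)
The plan is to prove, for every $A \subseteq C_p(X,[0,1])$, the equivalence ``$\udlc(A,X)$ iff $A$ is relatively countably compact'', using as a black box the hypothesis that ``$DLC(A',X)$ iff $A'$ is relatively countably compact'' for every $A' \subseteq C_p(X,[0,1])$. So the task reduces to relating $\udlc(A,X)$ and $DLC(A,X)$, and I would treat the two implications of the target biconditional separately.

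For the forward implication I would first isolate a lemma valid for an \emph{arbitrary} space: $DLC(A,X)$ implies $\udlc(A,X)$. This is the clean part. Assume a $\udlc$-failure for some $\langle f_n\rangle$, $\langle x_m\rangle$, $\mathcal{U}$, $\mathcal{V}$ with $\lim_{m\to\mathcal{V}}x_m=y$ existing and with the two iterated ultralimits unequal, say at distance $\delta$. I would build, by a back-and-forth construction, subsequences $\langle n_k\rangle$, $\langle m_l\rangle$: choose $n_k$ inside the $\mathcal{U}$-large set on which $f_{n_k}$ is $\epsilon$-close to the relevant ultralimit at $y$ and at the previously chosen points $x_{m_l}$, and choose $m_k$ inside the $\mathcal{V}$-large set on which the inner ultralimit is $\epsilon$-close to its value and $f_{n_j}(x_{m_k})$ is $\epsilon$-close to $f_{n_j}(y)$ (here $f_{n_j}(y)=\lim_{m\to\mathcal{V}}f_{n_j}(x_m)$ by continuity). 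A diagonal extraction using sequential compactness of $[0,1]$ then produces a single pair of subsequences along which \emph{both} ordinary iterated limits of $f_{n_k}(x_{m_l})$ exist; the estimates pin them within $2\epsilon$ of the two disagreeing ultralimits, so taking $\epsilon=\delta/5$ they differ, a genuine $DLC$-failure. Granting this lemma, if $A$ is relatively countably compact then $DLC(A,X)$ holds by hypothesis, whence $\udlc(A,X)$.

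For the reverse implication I would argue by contraposition. If $A$ is not relatively countably compact, the hypothesis gives a $DLC$-failure: sequences $\langle f_n\rangle\subseteq A$ and $\langle x_m\rangle\subseteq X$ with $\lim_n\lim_m f_n(x_m)=P\neq Q=\lim_m\lim_n f_n(x_m)$ in the ordinary sense. Split on whether $\langle x_m\rangle$ clusters in $X$. If it has a cluster point $y$, fix an ultrafilter $\mathcal{V}$ with $x_m\to_{\mathcal{V}}y$ and any nonprincipal $\mathcal{U}$; since every inner ordinary limit exists, the ultralimits collapse to the ordinary ones, giving iterated ultralimits $P$ and $Q$ while $\lim_{m\to\mathcal{V}}x_m=y$ exists. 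This is exactly a $\udlc$-failure, so $\udlc(A,X)$ fails, as desired.

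The remaining case, where $\langle x_m\rangle$ has no cluster point so that $\{x_m\}$ is closed discrete, is the main obstacle: no sequence converges to witness the failure, so I cannot read off a $\udlc$-failure directly from these data. My plan is to rule this case out by contradicting the hypothesis rather than exhibiting a convergent witness. Reproduce the double-limit failure inside a relatively compact set: separating the closed discrete set $\{x_m\}$ by a discrete family of continuous bumps $\phi_m$ with $\phi_m(x_m)=1$, put $\tilde f_n=\sum_m f_n(x_m)\phi_m$; these are continuous, agree with $f_n$ on $\{x_m\}$, and have continuous ultralimits, so $A^{\ast}=\{\tilde f_n\}$ is relatively compact yet $DLC(A^{\ast},X)$ fails, contradicting the hypothesis. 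The delicate point—and what I expect to be the crux—is precisely this realization on $X$: it requires the closed discrete set to admit a discrete (locally finite) open expansion, which is automatic when $X$ is normal, in particular for the metrizable spaces of types that motivate the paper, but needs care in the general completely regular setting. Conceptually, this is the tightness gap between ultrafilter limits of countable sequences and genuine convergence in $X$, and handling it is where the strength of the double limit hypothesis must be spent.
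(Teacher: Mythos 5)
Your forward implication and Case~A of your reverse implication are correct, and together they are more careful than the paper's own brief argument. The lemma you isolate --- $DLC(A,X)\Rightarrow\udlc(A,X)$ over an \emph{arbitrary} space, proved by interleaving choices of indices from $\mathcal{U}$-large and $\mathcal{V}$-large sets --- is sound (taking tolerance $1/t$ at stage $t$ even makes the final diagonal extraction unnecessary), and it lets you pass from relative countable compactness through the hypothesis directly; the paper instead quotes the easy direction of Theorem~\ref{thm:doublelimit}, which uses relative compactness rather than relative countable compactness. You have also correctly located the crux of the other direction: when the paper asserts that ``the ultralimits can be exchanged,'' it is appealing to $\udlc(A,X)$, whose defining proviso is that $\lim_{m\to\mathcal{V}}x_m$ exists in $X$, i.e., precisely that $\langle x_m\rangle$ clusters; the paper never addresses the non-clustering case at all, so your Case~A is in substance everything its proof actually establishes.

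However, your plan for Case~B is not merely incomplete for want of normality; as conceived it cannot work. You propose to show that a non-clustering $DLC$-failure \emph{contradicts} the hypothesis, but such configurations are consistent with it. In the Tychonoff plank $X=([0,\omega_1]\times[0,\omega])\setminus\{(\omega_1,\omega)\}$, every continuous function agrees, on a tail $\{(\alpha,k):\alpha\geq\beta\}$, with its right-edge values and takes the corner value $\lim_k f(\omega_1,k)$ on a tail of the top edge; using this one can check that $X$ satisfies the double limit condition (for relatively countably compact sets the corner values of all members are realized at a single point $(\alpha_0,\omega)$, forcing the iterated limits to agree even along the right edge, while sets that are not relatively countably compact have ultralimits whose discontinuities occur only at points of countable character, and your own extraction lemma converts those into sequential failures). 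Yet the set $A=\{f_n\}$, where $f_n$ is the characteristic function of the clopen set $[0,\omega_1]\times\{0,\dots,n\}$, is not relatively countably compact and has a $DLC$-failure whose witnesses $x_m=(\omega_1,m)$ form a closed discrete set. So Case~B occurs under the hypothesis, no contradiction can be derived from it, and consistently your bump construction is provably impossible here: any continuous bumps at the points $(\omega_1,m)$ have discontinuous pointwise sum, by the same eventual-constancy argument. The theorem survives in this example only because $\udlc(A,X)$ fails via \emph{different} witnesses: the ultralimit $g=\lim_{\mathcal{U}}f_n$ is discontinuous at the first countable point $(\alpha,\omega)$, and the sequence $(\alpha,i)\to(\alpha,\omega)$ produces the failure. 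A correct proof of the reverse implication must therefore manufacture, from non--relative-countable-compactness alone, a countably witnessed discontinuity of some ultralimit of a sequence from $A$, rather than refute the configuration; neither your Case~B nor the paper's own proof supplies such an argument for general completely regular $X$ (your route does close the normal case, where the hypothesis in fact forces countable compactness and Case~B is vacuous, and this covers the metrizable type spaces that motivate the paper).
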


\begin{proof}

That $A$ being relatively compact in $C_p(X, [0, 1])$ implies that the order at which the ultralimits are taken can be changed follows from a proof identical to the one for countable tightness. We now want to show that being able to change the order at which the limits are taken implies that $A$ is relatively compact in $C_p(X, [0, 1])$.

Take sequences $\seq{f}{n}{n < \omega} \subseteq A$ and $\seq{x}{m}{m < \omega} \subseteq X$. We shall use DLC($A, X$) to prove that $A$ is relatively compact. Thus, also assume that the limits $\lim_{n \to \infty}\lim_{m \to \infty}f_m(x_n)$ and $\lim_{m \to \infty}\lim_{n \to \infty}f_m(x_n)$ exist. We want to show that they are equal. 

Since the limits exist, they are equal to any ultralimit along a non-principal ultrafilter over $\omega$. Take such ultrafilters $\mcal{U}, \mathcal{V}$ on $\omega$. Then:

$\lim_{n \to \infty}\lim_{m \to \infty}f_m(x_n)=\lim_{n \to \mathcal{U}}\lim_{m \to \mathcal{V}}f_m(x_n)$ and $\lim_{m \to \mathcal{V}}\lim_{n \to \mathcal{U}}f_m(x_n)\break=\lim_{m \to \infty}\lim_{n \to \infty}f_m(x_n),$

by our observation above. But then the ultralimits can be exchanged because, by DLC($X$), the limits can be.
\end{proof}

Analogously to the discrete compact case, the model-theoretic version of Theorem \ref{thm:doublelimit} will relate definability to a double limit condition. For this purpose, we introduce the notion of definability for $\vphi$-types, where $\vphi$ is a formula for pairs of structures, following \cite{Casazza}:

\begin{defi}
	Suppose $L$ is a language for pairs of structures and $\vphi$ is a formula for pairs of structures. Let $C \subseteq \mr{Str}(L)$. A function $\tau: S^{r}_{\vphi}(C) \to [0, 1]$ is a \emph{left global $\vphi$-type over $C$} if there is a sequence $\seq{\mf{M}}{\alpha}{\alpha < \kappa} \subseteq C$, and an ultrafilter $\mcal{U}$ on $\kappa$, such that for every type $t \in S^{r}_{\vphi}(C)$, say $t = \lim_{\beta \to \mcal{V}}\mr{rtp}_{\vphi,\mf{N}_\beta}$, we have
	\begin{linenomath*} 
	\[\tau(t) = \lim_{\alpha \to \mcal{U}}\lim_{\beta \to \mcal{V}}\vphi(\mf{M}_\alpha, \mf{N}_\beta).\]
	\end{linenomath*} 
	We say that $\tau$ is \emph{explicitly definable} if it is continuous. If a left $\vphi$-type is given by $p = \lim_{\alpha \to \mcal{U}}\mr{ltp}_{\vphi,\mf{M}_\alpha}$, we say that $p$ is \emph{explicitly definable} if the respective $\tau$ is continuous.
\end{defi}

The classic idea of definability is rather simpler than the definition above, it just requires the object to be \textit{defined} by a formula. The previous definition is sound since in continuous logics the allowable formulas are continuous functions. Following the notation of the definition above, that a type $p$ is explicitly definable always implies that the corresponding global type $\tau$ is continuous; this is true in any logic even beyond first-order. On the other hand, that the continuity of $\tau$ implies that $p$ is explicitly definable requires, in general, compactness, usually through an argument related to the Stone-Weierstrass Theorem. We shall make use of two variations on this idea:
\begin{enumerate}
    \item We will show that certain types --- those of the pathological Banach spaces we'll consider --- are not explicitly definable by showing that their corresponding global types are not continuous.
    \item For a positive definability result, we will assume that a certain type is explicitly definable, and use the continuity of the corresponding global type that follows.
\end{enumerate}

Moreover, as with most concepts in continuous logic, such a definition reduces to the usual notion in the discrete setting. Also notice that, although our main application deals with Banach spaces, the context of the previous definition and the results in this section is that of the languages for pairs of structures rather than just Banach spaces. 

We proceed with the model-theoretic version of Theorem \ref{thm:doublelimit}:

\begin{thm}\label{maindlc}
	Let $L$ be a language for pairs of structures, and $\vphi$ a formula for pairs of structures. Suppose $C \subseteq \mr{Str}(L)$ is such that $S^{l}_{\vphi}(C)$ is countably tight. Then the following are equivalent:
	\begin{enumerate}[label=(\roman*)]
		\item \label{equi1} Whenever a left type over $C$ is given by $t = \lim_{i \to \mcal{U}}\mr{ltp}_{\vphi, \mf{M}_i}$, and $\seq{\mf{N}}{j}{j<\omega} \subseteq C$ is a sequence in $C$, and $\mcal{V}$ is an ultrafilter on $\omega$, then
		\begin{linenomath*} 
		\[\lim_{i \to \mcal{U}}\lim_{j \to \mcal{V}}\vphi(\mf{M}_i, \mf{N}_j) = \lim_{j \to \mcal{V}}\lim_{i \to \mcal{U}}\vphi(\mf{M}_i, \mf{N}_j).\]
		\end{linenomath*} 

		\item \label{equi2} Whenever a left type over $C$ is given by $t = \lim_{i \to \mcal{U}}\mr{ltp}_{\vphi, \mf{M}_i}$, and $\seq{\mf{N}}{j}{j<\omega} \subseteq C$ is a sequence in $C$, then
		\begin{linenomath*} 
		\[\sup_{i < j}\vphi(\mf{M}_i, \mf{N}_j) = \inf_{j < i}\vphi(\mf{M}_i, \mf{N}_j).\]
		\end{linenomath*} 

		\item \label{equi3} If $\tau$ is a global left $\vphi$-type over $C$, then $\tau$ is continuous.
	\end{enumerate}
\end{thm}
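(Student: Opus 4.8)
The plan is to run the cycle \ref{equi3} $\Rightarrow$ \ref{equi1} $\Rightarrow$ \ref{equi2} $\Rightarrow$ \ref{equi3}, disposing of the two short implications first and reserving the $C_p$-machinery for the last. Throughout I identify a left type $p = \lim_{i \to \mcal{U}} \mr{ltp}_{\vphi, \mf{M}_i}$ with its global type $\tau = \lim_{i \to \mcal{U}} \hat{\mf{M}}_i$, where $\hat{\mf{M}} \colon S^{r}_{\vphi}(C) \to [0,1]$ is the continuous evaluation $\hat{\mf{M}}(s) = s(\mf{M})$; since $\hat{\mf{M}}(\mr{rtp}_{\vphi, \mf{N}}) = \vphi(\mf{M}, \mf{N})$, this $\tau$ is exactly the one of the definition, and $\tau$ is explicitly definable iff $\tau \in C_p(S^{r}_{\vphi}(C), [0,1])$. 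For \ref{equi3} $\Rightarrow$ \ref{equi1}, assuming $\tau$ continuous and putting $s = \lim_{j \to \mcal{V}} \mr{rtp}_{\vphi, \mf{N}_j}$, the left iterated limit equals $\tau(s)$ while the right one equals $\lim_{j \to \mcal{V}} \tau(\mr{rtp}_{\vphi, \mf{N}_j})$, so continuity of $\tau$ along $\mr{rtp}_{\vphi, \mf{N}_j} \to_{\mcal{V}} s$ forces agreement; no tightness is needed here.

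For \ref{equi1} $\Leftrightarrow$ \ref{equi2} I would invoke the classical combinatorial double-limit (order-property) lemma of Grothendieck and K\"onig--Kuhn, applied to the bounded array $a_{ij} = \vphi(\mf{M}_i, \mf{N}_j)$, arguing by contrapositive in both directions. If \ref{equi2} fails, a gap $\sup_{i<j} a_{ij} > \inf_{j<i} a_{ij}$ (or the reverse) lets me use Ramsey's theorem to extract subsequences on which the upper and lower triangles are separated by rationals $r < s$; choosing the two ultrafilters so that one iterated ultralimit samples the upper triangle and the other the lower triangle yields unequal ultralimits, contradicting \ref{equi1}. Conversely, any failure of \ref{equi1} exhibits admissible data whose two iterated ultralimits differ, separating the triangles and contradicting \ref{equi2}. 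Passing to subsequences is harmless, since restricting $\mcal{U}$ keeps $t$ a left type.

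The substantial step is \ref{equi2} $\Rightarrow$ \ref{equi3}, and it is the only place countable tightness of $S^{l}_{\vphi}(C)$ enters. First, since $S^{l}_{\vphi}(C)$ is countably tight (hence weakly \groth by Theorem \ref{thm:stightwgroth}), I may take the defining sequence of any global type to be countable: the point $p \in S^{l}_{\vphi}(C)$ lies in the closure of $\st{\mr{ltp}_{\vphi, \mf{M}_\alpha}}{\alpha < \kappa}$, hence by tightness in the closure of a countable subfamily, so $p = \lim_{n \to \mcal{U}'} \mr{ltp}_{\vphi, \mf{M}_n}$ for some ultrafilter $\mcal{U}'$ on $\omega$; this brings \ref{equi3} into the countable regime of \ref{equi1}/\ref{equi2}. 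I then apply Theorem \ref{thm:doublelimit} to $X = S^{l}_{\vphi}(C)$ and $A = \st{\hat{\mf{N}}}{\mf{N} \in C} \subseteq C_p(S^{l}_{\vphi}(C), [0,1])$: condition \ref{equi1} is precisely $\udlc(A, S^{l}_{\vphi}(C))$ tested against the principal left types, which are dense in $S^{l}_{\vphi}(C)$, so $A$ is relatively compact, and this is the double-limit control that should govern the continuity of $\tau$.

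The hard part will be the bookkeeping in this last step, specifically bridging the \emph{left--right asymmetry}. Theorem \ref{thm:doublelimit} applied to the countably tight space $S^{l}_{\vphi}(C)$ directly controls ultralimits living in $C_p(S^{l}_{\vphi}(C), [0,1])$, whereas the global left type $\tau$ lives on $S^{r}_{\vphi}(C)$, which is \emph{not} assumed countably tight. What should save the argument is the symmetry of the array $\vphi(\mf{M}_i, \mf{N}_j)$ in its two indices: the very double-limit condition certifying relative compactness on the $S^{l}$ side is the one that, via density of the principal right types $\st{\mr{rtp}_{\vphi, \mf{N}}}{\mf{N} \in C}$ in $S^{r}_{\vphi}(C)$, ought to force the explicitly given formula for $\tau$ to coincide with the unique continuous extension of $p$ along $\mf{N} \mapsto \mr{rtp}_{\vphi, \mf{N}}$. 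Verifying that $\tau$ is genuinely continuous on all of $S^{r}_{\vphi}(C)$ --- not merely on a dense subset --- and that the principal-type reductions in both the domain and the test sequences are legitimate, is where the care must go.
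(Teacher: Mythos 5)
Your architecture (the cycle (iii) $\Rightarrow$ (i) $\Rightarrow$ (ii) $\Rightarrow$ (iii), with the combinatorial equivalence (i) $\Leftrightarrow$ (ii) treated as classical and all the $C_p$-theory concentrated in the last step) matches the paper's in substance: the paper cites \cite{Iovino1999} and \cite{Hamel} for (i) $\Leftrightarrow$ (ii), calls (iii) $\Rightarrow$ (i) immediate, and proves the crucial implication by applying Theorem \ref{thm:doublelimit} to $X = S^l_\vphi(C)$ with $A$ a family of evaluation functions --- exactly the machinery you name. But your proof of that crucial implication is not finished, and the part you defer as ``bookkeeping'' (bridging the left--right asymmetry) is the entire mathematical content of the step, not bookkeeping. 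You place $\tau$ on $S^r_\vphi(C)$ as the pointwise ultralimit $\lim_{\alpha\to\mcal{U}}\hat{\mf{M}}_\alpha$ of evaluations, and then hope that relative compactness of $\st{\hat{\mf{N}}}{\mf{N}\in C}$ in $C_p(S^l_\vphi(C),[0,1])$, together with density of the principal right types, ``ought to force'' continuity of $\tau$ on all of $S^r_\vphi(C)$. That inference fails as stated: agreement with a continuous object on a dense set does not propagate continuity to the whole space, and Theorem \ref{thm:doublelimit} requires countable tightness of the \emph{domain} of the functions in $A$, while nothing whatever is assumed about the topology of $S^r_\vphi(C)$. The same root problem already infects your (iii) $\Rightarrow$ (i): the ultralimit $s = \lim_{j\to\mcal{V}}\mr{rtp}_{\vphi,\mf{N}_j}$ you evaluate $\tau$ at need not exist in $S^r_\vphi(C)$ (the pointwise ultralimit on $C$ may be discontinuous, hence not a type), so there is no point at which to invoke continuity.

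The paper dissolves the asymmetry rather than bridging it: in its proof the global type is a function on $S^l_\vphi(C)$ \emph{itself}, defined by right-side data, namely $\tau(\lim_{i\to\mcal{U}}\mr{ltp}_{\vphi,\mf{M}_i}) = \lim_{i\to\mcal{U}}\lim_{\alpha\to\mcal{V}}\vphi(\mf{M}_i,\mf{N}_\alpha)$; equivalently $\tau = \lim_{\alpha\to\mcal{V}}f_\alpha$ pointwise, where $f_\alpha(t)=t(\mf{N}_\alpha)$ is continuous on $S^l_\vphi(C)$. Condition (i) is then read as $\udlc(\{f_\alpha\},S^l_\vphi(C))$, Theorem \ref{thm:doublelimit} makes $\clsr{\{f_\alpha\}}\cap C_p(S^l_\vphi(C),[0,1])$ compact, hence closed in $[0,1]^{S^l_\vphi(C)}$, so the pointwise ultralimit $\tau$ lies in it and is continuous; everything happens on the one space assumed countably tight, and no transfer to $S^r_\vphi(C)$ ever occurs. (You were arguably set up for this: the paper's formal definition gives left global types the domain $S^r_\vphi(C)$, while its proof uses the dual reading, which is the only one under which the hypothesis on $S^l_\vphi(C)$ can do any work. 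By taking the definition literally you inherited a statement that the stated hypothesis does not support.) A second, smaller gap: your reduction of the defining $\kappa$-sequence to a countable one via tightness only guarantees that the new ultralimit agrees with $\tau$ at principal types; a global type is determined by its defining net and ultrafilter, not by the limit point $p\in S^l_\vphi(C)$ alone, so this replacement changes the object unless you prove otherwise --- the paper avoids this entirely by keeping the $\kappa$-sequence and the ultrafilter $\mcal{V}$ on $\kappa$ throughout, since $\udlc$ only ever samples countably many of the $f_\alpha$ at a time.
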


\begin{proof}
	For a proof of the equivalence of \ref{equi1} and \ref{equi2}, see \cite{Iovino1999} or \cite{Hamel}. To see that \ref{equi1} implies \ref{equi3}, suppose $\langle \mf{N}_{\alpha} : \alpha<\kappa\rangle$ is a $\kappa$-sequence in $C$ such that 
	\begin{linenomath*} 
	\[\tau(\lim_{i \to \mcal{U}}\mr{ltp}_{\vphi, \mf{M}_i}) = \lim_{i \to \mcal{U}}\lim_{\alpha \to \mcal{V}}\vphi(\mf{M}_i, \mf{N}_{\alpha})\]
	\end{linenomath*} 
	defines a global $\vphi$-type, where $\mcal{V}$ is an ultrafilter on $\kappa$. Define $f_{\alpha}: S^l_{\vphi}(C) \to [0, 1]$ by
	\begin{linenomath*} 
	\[f_{\alpha}(\lim_{i \to \mcal{U}}\mr{ltp}_{\vphi, \mf{M}_i}) = \lim_{i \to \mcal{U}}\vphi(\mf{M}_i, \mf{N}_{\alpha}).\]
	\end{linenomath*} 
	Then the set $A = \{f_{\alpha} : \alpha<\kappa\}$ is a subset $A \subseteq C_p(S^l_{\vphi}(C), [0, 1])$ and satisfies $\udlc(A, X)$, so that $\clsr{A} \cap C_p(S^l_{\vphi}(C), [0, 1])$ is compact and $\tau = \lim_{\alpha \to \mcal{V}}f_{\alpha}$ is continuous. That \ref{equi3} implies \ref{equi1} is immediate.
\end{proof}

\begin{rmk}\label{rmk:cpctproved}
  All this proof used of countable tightness was that it implies weakly \groth and the double ultralimit condition. Thus we have also proved the compact case, which was proved in \cite{Casazza}. In Section 6 we shall exploit this observation in order to obtain the countably compact case, which \cite{Casazza} claimed but is no longer claimed in \cite{CDI}.
\end{rmk}

\section{Applications and Examples Concerning the Undefinability of Pathological Banach Spaces}
We first introduce some concepts from Analysis which we will deal with in what follows. We denote the Banach space of sequences of real numbers that are eventually $0$ by $c_{00}$. The space of sequences of real numbers converging to $0$ is denoted by $c_0$. Finally, $\ell^p$ denotes the space of sequences $\seq{x}{n}{n<\omega}$ of real numbers such that $\sum_{n < \omega}\abs{x_n}^p < \infty$.

B.~Tsirelson \cite{Tsirelson1974} constructed a Banach space which does not include a copy of any $\ell^p$ or $c_0$. What is now called \textit{Tsirelson's space} is due to T.~Figiel and W.~Johnson \cite{Figiel1974}: let $\seq{t}{n}{n<\omega}$ be the canonical basis for $c_{00}$. If $x = \sum_{n<\omega}a_nx_n \in c_{00}$ and $E, F \in [\omega]^{<\omega}$, we denote $\sum_{n \in E}a_nx_n$ by $Ex$, and similarly for $F$. We write $E \leq F$ if and only if $\max E \leq \min F$.

Recursively define
\begin{linenomath*} 
\begin{align*}
 	&\norm{x}_0 = \norm{x}_{c_{00}}, \\
 	&\norm{x}_{n+1} = \max\left\{\norm{x}_n, \frac{1}{2}\max_k\left\{\sum_{i<k}\norm{E_ix}_n\,:\,\{k\} \leq E_1 < \cdots < E_k\right\}\right\}.
 \end{align*}
 \end{linenomath*} 
 Then,
 \begin{linenomath*} 
 \[\norm{x}_n \leq \norm{x}_{n+1} \leq \norm{x}_{\ell^1}\]
 \end{linenomath*} 
 and we denote $\norm{x}_T = \lim_{n \to \infty}\norm{x}_n$. Then,
 \begin{linenomath*} 
 \[\norm{x}_T = \max\left\{\norm{x}_{c_{00}}, \frac{1}{2}\max\left\{\sum_{i<k}\norm{E_ix}_T\,:\,\{k\} \leq E_1 < \cdots < E_k\right\}\right\}\]
 \end{linenomath*} 
 and $T$ is the norm completion of $\norm{}_T$. The reader is referred to P.~Casazza and T.~Shura \cite{Casazza1989} for a detailed exposition of Tsirelson's space. 

 The context of the subsequent results is what is now known as \textit{Gowers' problem: does every infinite dimensional explicitly definable Banach space include an isomorphic copy of $\ell^p$ for some $p$, or $c_0$?} We found in \cite{Casazza} the inspiration for the framework and results presented in the current paper. The topological assumption used in \cite{Casazza} is compactness; we have shown countable tightness is just as good. Neither compactness nor countable tightness imply the other. The interval $(0, 1)$ is countably tight but not compact; $\omega_1+1$ is compact but not countably tight. In terms of results, we can trivially deduce their main definability results from ours because finitary continuous logic is a countable fragment (see below) of infinitary continuous logic. However their more general definability results for compact logics were obtained using a double limit condition on sequences. To obtain our results, we needed to instead use a double limit condition involving ultralimits and assume countable tightness on the relevant spaces of $\varphi$-types. However, a common generalization of compactness and countable tightness that yields the desired undefinability results is precisely our double ultralimit condition plus weakly \groth.

At the end of this paper we shall explore topological weakenings of compactness that still yield the desired undefinability results by exploiting \cite{Koenig1987}, but for now, fix a language $L$ for a pair of structures. Let $C$ be the set of all $L$-structures which are normed spaces based on $c_{00}$ (see definition below). We introduce the continuous logic formula for pairs of structures used in \cite{Casazza}: for norms $\norm{}_1$ and $\norm{}_2$, define
 \begin{linenomath*} 
 \[D(\norm{}_1, \norm{}_2) = \sup\left\{\frac{\norm{x}_1}{\norm{x}_2}\,:\,\norm{x}_{\ell^1} = 1\right\}.\]
 \end{linenomath*} 
 Then define
 \begin{linenomath*} 
 \begin{equation}
 \tag{$\star$}
 \label{def:vphi}
 \vphi(\norm{}_1, \norm{}_2) = 1 - \frac{\log D(\norm{}_1, \norm{}_2)}{1+\log D(\norm{}_1, \norm{}_2)}.
 \end{equation}
 \end{linenomath*} 

 It is not difficult to see that if $t = \lim_{i \to \mcal{U}}\mr{ltp}_{\vphi, \norm{}_i}$ is realized by a structure $(\mf{M}, \norm{}_*)$, then
 \begin{linenomath*} 
 \begin{equation}
 \label{eq:limnorm}
 	\lim_{i \to \mcal{U}}\sup_{\norm{x}_{\ell^1} = 1}\frac{\norm{x}_i}{\norm{x}} = \sup_{\norm{x}_{\ell^1} = 1}\frac{\norm{x}_*}{\norm{x}}
 \end{equation}
 \end{linenomath*} 
 for any structure $(c_{00}, \norm{x}) \in C$. In particular, taking $\norm{}_{\ell^1} = \norm{}$ yields $\lim_{i \to \mcal{U}}\norm{x}_i = \norm{x}_*$ for each $x \in c_{00}$. If in addition 
 \begin{linenomath*} 
 \[\norm{}_1 \leq \norm{}_2 \leq \cdots \leq \norm{}_n \leq \cdots\]
 \end{linenomath*} 
 then $\lim_{i \to \infty}\norm{x}_i = \norm{x}_*$.

 Conversely, if
 \begin{linenomath*} 
 \[\norm{}_1 \leq \norm{}_2 \leq \cdots \leq \norm{}_n \leq \cdots\]
 \end{linenomath*} 
 and also $\lim_{i \to \infty}\norm{x}_i = \norm{x}_*$ for every $x \in c_{00}$, then \eqref{eq:limnorm} holds for any $(c_{00}, \norm{}) \in C$ and any nonprincipal ultrafilter $\mcal{U}$ over $\omega$.

 This motivates the following definitions from \cite{Casazza}:

 \begin{defi}
 	A structure $(c_{00}, \norm{}_{\ell^1}, \norm{}, e_0, e_1, \ldots)$ where $\norm{}$ is an arbitrary norm and $\seq{e}{n}{n<\omega}$ is the standard vector basis of $c_{00}$ is called a \emph{structure based on $c_{00}$}.
 \end{defi}

 \begin{defi}
 \label{def:uniquelydetermined}
 	Let $\mcal{C}$ be a family of structures which are normed spaces based on $c_{00}$, and $\vphi$ a formula for a pair of structures, and $\norm{}_*$ a norm on $c_{00}$.
 	\begin{enumerate}[label=(\alph*)]
 		\item If $\st{\norm{}_i}{i < \omega}$ is a family of norms on $c_{00}$ we say that $\st{\mr{ltp}_{\vphi, \norm{}_i}}{i < \omega}$ \emph{determines $\norm{}_*$ uniquely} if, for every ultrafilter $\mcal{U}$ on $\omega$, the type $t = \lim_{i \to \mcal{U}}\mr{ltp}_{\vphi, \norm{}_i}$ is realized, and $\norm{}_*$ is its unique realization.

 		\item We say that $\norm{}_*$ is \emph{uniquely determined by its $\vphi$-type over $\mcal{C}$} if there is a family of norms $\st{\norm{}_i}{i < \omega}$ on $c_{00}$ in $\mathcal{C}$ such that $\st{\mr{ltp}_{\vphi, \norm{}_i}}{i < \omega}$ determines $\norm{}_*$ uniquely.
 	\end{enumerate}
 \end{defi}

 Notice that if $\norm{}_i$ denotes the $i$-th iterate of the Tsirelson norm, and $\norm{}_T$ the Tsirelson norm, then $\lim_{i \to \infty}\norm{x}_i = \norm{x}_T$ for each $x \in c_{00}$, and so we have the following result from \cite{Casazza}:

 \begin{prop}
 	Let $L$ be a language for pairs of structures, $\mcal{C}$ a class of structures $(c_{00}, \norm{}_{\ell^1}, \norm{})$ such that the norm completion of $(c_{00}, \norm{})$ is a Banach space including $\ell^p$ or $c_{0}$, and let $\vphi(X, Y)$ be the formula defined by \eqref{def:vphi} above. Suppose $\st{(c_{00}, \norm{}_{\ell^1}, \norm{}_i)}{i < \omega}$ is a family of structures in $\mcal{C}$ such that
 	\begin{linenomath*} 
 	\[\norm{}_1 \leq \norm{}_2 \leq \cdots \leq \norm{}_n \leq \cdots\]
 	\end{linenomath*} 
 	and the $\vphi$-type $t = \lim_{i \to \mcal{U}}\mr{ltp}_{\vphi, \norm{}_i}$ is realized by $(c_{00}, \norm{}_{\ell^1}, \norm{}_*)$ in $\mr{Str}(L)$, then $\st{\mr{ltp}_{\vphi, \norm{}_i}}{i < \omega}$ uniquely determines $\norm{}_*$ over $\mcal{C}$. In particular, the Tsirelson norm is uniquely determined by its $\vphi$-type over $\mcal{C}$.
 \end{prop}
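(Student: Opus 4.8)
The plan is to deduce everything from the two observations recorded just before the statement---the \emph{forward} one (if $t = \lim_{i \to \mcal{U}}\mr{ltp}_{\vphi, \norm{}_i}$ is realized by $\norm{}_*$, then $\lim_{i \to \mcal{U}}\norm{x}_i = \norm{x}_*$ for each $x \in c_{00}$) and its \emph{converse} (monotonicity of the $\norm{}_i$ together with $\lim_{i \to \infty}\norm{x}_i = \norm{x}_*$ yields \eqref{eq:limnorm} for every nonprincipal ultrafilter)---together with the remark that an ordinary limit, once it exists, equals every nonprincipal ultralimit. First I would take the ultrafilter $\mcal{U}$ named in the hypothesis and invoke the forward observation to get $\lim_{i \to \mcal{U}}\norm{x}_i = \norm{x}_*$ for each $x$. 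Since $\left(\norm{x}_i\right)_{i < \omega}$ is monotone increasing (as $\norm{}_1 \leq \norm{}_2 \leq \cdots$) and its ultralimit is the finite number $\norm{x}_*$, the sequence is bounded, hence converges, and $\lim_{i \to \infty}\norm{x}_i = \sup_i \norm{x}_i = \norm{x}_*$ for every $x \in c_{00}$.

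Next I would fix an arbitrary nonprincipal ultrafilter $\mcal{V}$ on $\omega$ (the convention under which the observations above are stated) and show that $t_{\mcal{V}} = \lim_{i \to \mcal{V}}\mr{ltp}_{\vphi, \norm{}_i}$ is realized by $\norm{}_*$. Because $\norm{x}_i$ increases to $\norm{x}_*$ pointwise, interchanging suprema gives, for every test structure $(c_{00}, \norm{}) \in C$, $\sup_i D(\norm{}_i, \norm{}) = \sup_{\norm{x}_{\ell^1} = 1}\sup_i \tfrac{\norm{x}_i}{\norm{x}} = D(\norm{}_*, \norm{})$; as $D(\norm{}_i, \norm{})$ increases in $i$, this supremum is its ordinary limit, hence equals the nonprincipal ultralimit $\lim_{i \to \mcal{V}}D(\norm{}_i, \norm{})$. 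This is precisely \eqref{eq:limnorm}. Composing with the continuous map $D \mapsto 1 - \tfrac{\log D}{1 + \log D}$ defining $\vphi$ gives $\lim_{i \to \mcal{V}}\vphi(\norm{}_i, \norm{}) = \vphi(\norm{}_*, \norm{})$ for every $\norm{}$, i.e.\ $\mr{ltp}_{\vphi, \norm{}_i} \to \mr{ltp}_{\vphi, \norm{}_*}$ pointwise along $\mcal{V}$, so $t_{\mcal{V}} = \mr{ltp}_{\vphi, \norm{}_*}$ is realized by $\norm{}_*$.

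For uniqueness, I would suppose some $(c_{00}, \norm{}_{\ell^1}, \norm{}_{**})$ also realizes $t_{\mcal{V}}$; the forward observation applied to $\norm{}_{**}$ yields $\lim_{i \to \mcal{V}}\norm{x}_i = \norm{x}_{**}$, which by the previous paragraph equals $\norm{x}_*$, and since ultralimits in the Hausdorff space $\reals$ are unique, $\norm{}_{**} = \norm{}_*$. Taking the witnessing family to be $\{\norm{}_i\}$ itself, this is exactly the assertion that $\norm{}_*$ is uniquely determined by its $\vphi$-type over $\mcal{C}$. For the final sentence I would apply the result with $\norm{}_i$ the $i$-th Tsirelson iterate and $\norm{}_* = \norm{}_T$: the iterates increase, each of their completions contains a copy of $c_0$ (so the family lies in an admissible $\mcal{C}$), and $\lim_{i \to \infty}\norm{x}_i = \norm{x}_T$, so the converse observation shows $\norm{}_T$ realizes $t$ and the general statement gives the conclusion.

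The work is not in any single calculation but in keeping the quantifiers straight: the forward observation is what forces uniqueness (every realization must coincide with the common ultralimit of the $\norm{x}_i$), whereas the converse observation, fed through the principle ``ordinary limit $\Rightarrow$ every nonprincipal ultralimit,'' is what upgrades realization from the single ultrafilter in the hypothesis to all (nonprincipal) ultrafilters. The one point to check carefully is that $\vphi$ is a continuous function of $D$ on the range of values that actually occurs, so that convergence of the quantities $D(\norm{}_i, \norm{})$ transfers to convergence of the $\vphi$-types.
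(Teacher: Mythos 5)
Your proposal is correct and takes essentially the same route as the paper, which presents this proposition as an immediate consequence (citing \cite{Casazza}) of the two observations surrounding \eqref{eq:limnorm}: realization along the given (nonprincipal) ultrafilter forces the monotone sequence $\norm{x}_i$ to converge pointwise to $\norm{x}_*$, and that pointwise convergence conversely yields realization---with $\norm{}_*$ as the unique realization---along every nonprincipal ultrafilter, the Tsirelson case following because the iterates converge pointwise to $\norm{}_T$. The details you supply (the interchange of suprema behind the converse observation, the monotone-sequence/ultralimit facts, and the continuity of $D \mapsto 1 - \log D/(1+\log D)$ on the relevant range) are precisely the steps the paper leaves implicit.
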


 The following result is proved in the same paper \cite{Casazza} and is purely analytical:

 \begin{prop}\label{analysisCI}
 	Let $\norm{}_i$ be the $i$-th iterate in the construction of the Tsirelson norm. Then the following hold:
 	\begin{enumerate}[label=(\roman*)]
 		\item $\sup_{\norm{x}_{\ell^1}=1}\frac{\norm{x}_i}{\norm{x}_j} \leq 1$ for $i < j$.

 		\item $\sup_{\norm{x}_{\ell^1}=1}\frac{\norm{x}_i}{\norm{x}_j} \geq j$ for $i > j$.
 	\end{enumerate}
 	Thus, $\sup_{i<j}\vphi(\norm{}_i, \norm{}_j) \neq \inf_{j<i}\vphi(\norm{}_i, \norm{}_j)$.
 \end{prop}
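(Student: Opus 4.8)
The plan is to handle the two iterate estimates separately and then read the conclusion off the explicit shape of $\vphi$. Observe first that $\vphi(\norm{}_a,\norm{}_b)$ depends only on $D=D(\norm{}_a,\norm{}_b)=\sup\{\norm{x}_a/\norm{x}_b:\norm{x}_{\ell^1}=1\}$, and that the defining formula simplifies to $\vphi=\frac{1}{1+\log D}$, a strictly decreasing function of $D$ on $[1,\infty)$ with $\vphi=1$ exactly when $D=1$. Part (i) is then immediate from the monotonicity $\norm{x}_i\le\norm{x}_{i+1}\le\norm{x}_{\ell^1}$ recorded above: for $i<j$ we have $\norm{x}_i\le\norm{x}_j$ for every $x$, so $\norm{x}_i/\norm{x}_j\le 1$ and hence $D(\norm{}_i,\norm{}_j)\le 1$. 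Evaluating on a single basis vector $t_n$, for which $\norm{t_n}_n=1$ at every iterate, gives ratio exactly $1$, so in fact $D(\norm{}_i,\norm{}_j)=1$ and $\vphi(\norm{}_i,\norm{}_j)=1$ whenever $i<j$.

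For part (ii) I would first reduce the range of $i$: since $\norm{x}_i$ is nondecreasing in $i$ for each fixed $x$, the supremum $\sup_{\norm{x}_{\ell^1}=1}\norm{x}_i/\norm{x}_j$ is nondecreasing in $i$, so it suffices to exhibit, for each $j$, a single normalized vector $x$ with $\norm{x}_{j+1}\ge j\,\norm{x}_j$. I would build $x$ recursively as a Tsirelson block vector: take $x=\sum_{\ell=1}^{2j}y_\ell$, where $y_1<\cdots<y_{2j}$ are successive admissible translates of one building block $w$, positioned so that $2j\le\min\mathrm{supp}(y_1)$. Applying the recursion once to the partition into these $2j$ blocks gives the lower bound $\norm{x}_{j+1}\ge\frac12\sum_{\ell}\norm{y_\ell}_j=j\,\norm{w}_j$, so the claim reduces to the matching upper bound $\norm{x}_j\le\norm{w}_j$; iterating this reduction one level at a time leads to a block vector whose supports are \emph{packed} (contiguous, with size exceeding the starting index).

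The crux, and the main obstacle, is precisely this upper bound on the low-depth iterate $\norm{x}_j$: one must show that no admissible decomposition of depth at most $j$ does appreciably better than the intended $j$-level tree. Here the packing is essential. For a vector whose support sits far out one can always split into singletons, and since $\sum_{\ell}\norm{E_\ell x}_0=\norm{x}_{\ell^1}$ this single step already attains $\frac12\norm{x}_{\ell^1}$, collapsing all iterates $\norm{x}_n$ with $n\ge 1$ to one value and destroying any gap. Packing the support forbids such wide shallow splits, because the admissibility requirement $k\le\min E_1$ caps the number of pieces available at small depth by the (small) starting index; this is what lets $\norm{x}_j$ lag behind $\norm{x}_{j+1}$ by the required factor. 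These two-sided estimates on the iterates of the Tsirelson norm are standard, and a detailed block analysis establishing them may be found in Casazza and Shura \cite{Casazza1989}.

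Finally, assembling the pieces: part (i) gives $\vphi(\norm{}_i,\norm{}_j)=1$ for all $i<j$, so $\sup_{i<j}\vphi(\norm{}_i,\norm{}_j)=1$. Part (ii) gives $D(\norm{}_i,\norm{}_j)\ge j$ for all $j<i$, whence $\vphi(\norm{}_i,\norm{}_j)=\frac{1}{1+\log D(\norm{}_i,\norm{}_j)}\le\frac{1}{1+\log j}$; letting $j\to\infty$ shows $\inf_{j<i}\vphi(\norm{}_i,\norm{}_j)=0$. In particular the two quantities are unequal, which is the assertion of the proposition.
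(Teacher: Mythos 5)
Your part (i) and your final deduction are fine: monotonicity of the iterates gives $D(\norm{}_i,\norm{}_j)\le 1$ for $i<j$, the basis vectors give $D=1$ exactly, and since $\vphi=\frac{1}{1+\log D}$ is decreasing in $D$, parts (i) and (ii) together yield $\sup_{i<j}\vphi(\norm{}_i,\norm{}_j)=1$ while $\inf_{j<i}\vphi(\norm{}_i,\norm{}_j)\le\inf_{j}\frac{1}{1+\log j}=0$. Note, for calibration, that the paper itself gives no proof at all: it cites \cite{Casazza}, where part (ii) is the only substantive estimate. So everything rides on your sketch for (ii), and that is where there is a genuine flaw.

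The reduction you set up --- $x=\sum_{\ell=1}^{2j}y_\ell$ with the $y_\ell$ \emph{unweighted} successive translates of $w$, $2j\le\min\mathrm{supp}(y_1)$, followed by the "matching upper bound" $\norm{x}_j\le\norm{w}_j$ --- cannot be carried out, because that upper bound is false. Admissibility of your $2j$-fold partition depends only on $2j\le\min\mathrm{supp}(y_1)$, not on the depth of the iterate, so the very same split you use to bound $\norm{x}_{j+1}$ from below is available one level down: $\norm{x}_j\ge\frac12\sum_{\ell}\norm{y_\ell}_{j-1}\ge j\norm{w}_{j-1}$; moreover $\norm{x}_j\ge\norm{y_1}_j\ge\norm{w}_j$, since restrictions and right translations are norm-monotone for these iterates. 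Hence $\norm{x}_j\le\norm{w}_j$ would force $\norm{w}_j\ge j\norm{w}_{j-1}$, which is exactly the gap you are trying to manufacture, pushed down one level with \emph{no} decrease in the constant: the "iterating one level at a time" is circular. And it collapses concretely at the bottom: take $j\ge 3$ and $w=\chi_{[2j,4j)}$ (which does satisfy $\norm{w}_1=j=j\norm{w}_0$); then $x=\chi_{[2j,\,4j^2+2j)}$, and splitting $x$ into $2j^2$ singletons at positions $\ge 2j^2$ is admissible, so $\norm{x}_1\ge j^2>2j\ge\norm{w}_j$. Thus the wide shallow split that "packing" was supposed to forbid is still present for unweighted block sums and destroys the bound. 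The actual mechanism in \cite{Casazza} (resting on the analysis in \cite{Casazza1989}) is not positional packing but \emph{weights}: one uses $\ell^1$-normalized repeated averages, in which successive blocks carry geometrically decaying coefficients, so that any single admissible family at low depth can capture only about one level's worth of $\ell^1$ mass; that is what makes $\norm{x}_j$ lag behind $\norm{x}_{j+1}$. Your sketch contains no weights, so the crucial estimate you defer to the literature is not the statement the literature proves, and in the form you stated it, it is false.
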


Proposition \ref{analysisCI}, Theorem \ref{maindlc}, and Remark \ref{rmk:cpctproved} yield:

 \begin{thm}\label{notdef}
 	Let $L$ be a language for pairs of structures, and $\mcal{C}$ a subclass of the class of structures $(c_{00}, \norm{}_{\ell^1}, \norm{})$ such that the norm completion of $(c_{00}, \norm{})$ is a Banach space including some $\ell^p$ or $c_0$, and including the spaces used in the construction of the Tsirelson space. Let $\norm{}_T$ be the Tsirelson norm. Let $\vphi$ be the formula as in \eqref{def:vphi} above. If the space $S^l_\vphi(\mcal{C})$ of $\vphi$-types over $\mcal{C}$ is weakly Grothendieck and satisfies the double ultralimit condition, then $\norm{}_T$ is uniquely determined by its $\vphi$-type over $\mcal{C}$ and that $\vphi$-type is not explicitly definable over $\mcal{C}$.
 \end{thm}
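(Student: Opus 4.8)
The plan is to prove the two conclusions separately, since they draw on different inputs. The unique-determination assertion is essentially immediate from results already established. Because $\mcal{C}$ is assumed to contain the structures $(c_{00}, \norm{}_{\ell^1}, \norm{}_i)$ arising in the construction of the Tsirelson space, and because $\norm{}_1 \leq \norm{}_2 \leq \cdots$ with $\lim_{i \to \infty}\norm{x}_i = \norm{x}_T$ for every $x \in c_{00}$, the $\vphi$-type $t_T = \lim_{i \to \mcal{U}}\mr{ltp}_{\vphi, \norm{}_i}$ is realized by $(c_{00}, \norm{}_{\ell^1}, \norm{}_T)$ for any ultrafilter $\mcal{U}$ on $\omega$. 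First I would invoke the Proposition on unique determination stated just before Proposition \ref{analysisCI}, whose hypotheses are then met, to conclude directly that $\st{\mr{ltp}_{\vphi, \norm{}_i}}{i < \omega}$ determines $\norm{}_T$ uniquely over $\mcal{C}$. No topological hypothesis on $S^{l}_{\vphi}(\mcal{C})$ is needed for this half.

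For the undefinability half, the plan is to apply Theorem \ref{maindlc} with $C = \mcal{C}$ and then argue by contradiction. The first step is to check that the hypothesis of the present theorem supplies exactly what Theorem \ref{maindlc} needs: by Remark \ref{rmk:cpctproved}, the proof of Theorem \ref{maindlc} uses of countable tightness only that $S^{l}_{\vphi}(\mcal{C})$ is weakly \groth and satisfies the double ultralimit condition, and both are assumed here. Hence the equivalence of conditions \ref{equi1}, \ref{equi2}, \ref{equi3} of Theorem \ref{maindlc} is available over $\mcal{C}$. Now suppose toward a contradiction that the Tsirelson type $t_T$ is explicitly definable; by definition this means that its associated global left $\vphi$-type $\tau$ is continuous, i.e.\ condition \ref{equi3} holds for $t_T$.

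The core of the argument is that the implications \ref{equi3} $\Rightarrow$ \ref{equi1} $\Rightarrow$ \ref{equi2} are pointwise in the single distinguished type $t_T$. Continuity of $\tau$ forces the double ultralimit to be exchangeable for $t_T$ against every sequence $\seq{\mf{N}}{j}{j<\omega} \subseteq \mcal{C}$ and every ultrafilter $\mcal{V}$ on $\omega$, since a continuous map preserves ultralimits: writing $s = \lim_{j \to \mcal{V}}\mr{rtp}_{\vphi, \mf{N}_j} \in S^{r}_{\vphi}(\mcal{C})$, continuity gives $\tau(s) = \lim_{j \to \mcal{V}}\tau(\mr{rtp}_{\vphi, \mf{N}_j}) = \lim_{j \to \mcal{V}}\lim_{i \to \mcal{U}}\vphi(\norm{}_i, \mf{N}_j)$, whereas the definition of $\tau$ gives $\tau(s) = \lim_{i \to \mcal{U}}\lim_{j \to \mcal{V}}\vphi(\norm{}_i, \mf{N}_j)$, and equating the two is precisely condition \ref{equi1} for $t_T$. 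The combinatorial equivalence \ref{equi1} $\Leftrightarrow$ \ref{equi2} (from \cite{Iovino1999} or \cite{Hamel}) then yields condition \ref{equi2} for $t_T$. Specializing the test sequence to $\mf{N}_j = (c_{00}, \norm{}_{\ell^1}, \norm{}_j)$, which again lies in $\mcal{C}$, this says $\sup_{i<j}\vphi(\norm{}_i, \norm{}_j) = \inf_{j<i}\vphi(\norm{}_i, \norm{}_j)$. But this flatly contradicts the conclusion of Proposition \ref{analysisCI}. Therefore $\tau$ cannot be continuous, and $t_T$ is not explicitly definable over $\mcal{C}$.

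I expect the main obstacle to be bookkeeping rather than mathematical depth. One must make sure that the equivalence in Theorem \ref{maindlc} is genuinely applied pointwise to the single type $t_T$ and to the single test sequence of Tsirelson iterates, rather than only as a statement blanket-quantified over all types; and one must track the left/right roles in the definition of the global $\vphi$-type carefully, so that the ``continuity preserves ultralimits'' step lands on exactly the identity displayed in condition \ref{equi1}. Once the hypotheses are matched to Theorem \ref{maindlc} through Remark \ref{rmk:cpctproved}, the rest is the contrapositive assembly of Proposition \ref{analysisCI} and Theorem \ref{maindlc} described above.
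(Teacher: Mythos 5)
Your proposal is correct and is essentially the paper's own proof: the paper derives Theorem \ref{notdef} in one line by combining the unique-determination Proposition, Proposition \ref{analysisCI}, Theorem \ref{maindlc}, and Remark \ref{rmk:cpctproved}, exactly the assembly you describe. Your explicit verification that the implications \ref{equi3} $\Rightarrow$ \ref{equi1} $\Rightarrow$ \ref{equi2} localize to the single Tsirelson type $t_T$ (rather than holding only as blanket-quantified statements) is a detail the paper leaves implicit, and it is handled correctly.
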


 \begin{rmk*}
 	Notice that here we are dealing with languages for pairs of structures in which only first-order variables are considered. In such a context, definitions like Definition \ref{def:uniquelydetermined} are sound. An interesting logic with countably many formulas is second-order logic. Although it seems as if the topology is confined to subspaces of $2^\omega$, the model theory is not very well developed, even though Chang and Keisler \cite{Chang1973} put the development of the model theory of second-order logic as one of the problems in their classic book. However, see \cite{Vaeaenaenen2023}. It is unclear to the authors whether the results and even the definitions presented here apply to second-order logic, which has been called ``set theory in sheep's clothing'' \cite{Quine1986}. In particular, it would be interesting to know about the definability of Tsirelson's space in second-order arithmetic, which is sometimes conflated with Analysis.
 \end{rmk*}

 We now prove a positive definability result which generalizes the one that appears in \cite{Casazza}. The proof is a variation of the original, in which their use of the Stone-Weierstrass Theorem is replaced by working with the double (ultra)limit condition. This is important because the Stone-Weierstrass Theorem is not nicely extendable beyond compact spaces. The idea of the following theorem is that if the $\varphi$-type of a space $\mf{M}$ is explicitly definable from a well-behaved class based on $c_{00}$, i.e. of spaces including a copy of $c_0$ or $\ell^p$, then $\mf{M}$ is also well-behaved as it includes a copy of $c_0$ or some $\ell^p$.  

  \begin{thm}
 	Let $L$ be a language for pairs of structures, $\vphi$ the formula defined by \eqref{def:vphi} above, and $C$ a subclass of the class of structures based on $c_{00}$ such that every closed subspace of a space in $\clsr{\mcal{C}}$ includes a copy of $c_0$ or $\ell^p$. Assume that the space of $\vphi$-types over $C$ is weakly Grothendieck and satisfies the double ultralimit condition. If the $\vphi$-type of $\mf{M}$ is explicitly definable from $C$, then $\mf{M}$ includes a copy of $c_0$ or some $\ell^p$.
 \end{thm}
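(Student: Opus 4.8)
The plan is to use explicit definability to pass, via the double ultralimit condition, from the continuity of a global type to a concrete realization of $\mf{M}$ as a closed subspace of a member of $\clsr{\mcal{C}}$, after which the conclusion is forced by the hypothesis placed on $\clsr{\mcal{C}}$. First I would unpack the definability hypothesis: by definition the $\vphi$-type of $\mf{M}$ is $\mr{ltp}_{\vphi, \mf{M}} = \lim_{\alpha \to \mcal{U}} \mr{ltp}_{\vphi, \mf{M}_\alpha}$ for some sequence $\seq{\mf{M}}{\alpha}{\alpha < \kappa} \subseteq C$ and ultrafilter $\mcal{U}$, and the respective global left $\vphi$-type $\tau$ is continuous. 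Since $S^l_\vphi(C)$ is weakly \groth and satisfies the double ultralimit condition, I would invoke the variant of Theorem \ref{maindlc} valid under exactly these weaker hypotheses---whose proof, as observed in Remark \ref{rmk:cpctproved}, uses only weak Grothendieckness and the double ultralimit condition rather than countable tightness. This delivers the equivalence of the continuity of $\tau$ with the double (ultra)limit identity of \ref{equi1} and the sup--inf identity of \ref{equi2} along the defining sequence. This is the step that replaces the appeal to the Stone--Weierstrass Theorem in \cite{Casazza}.

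Next I would extract the limiting norm. Writing $\norm{}_*$ for the norm of $\mf{M}$ and $\norm{}_\alpha$ for the norm of $\mf{M}_\alpha$, the analogue of equation \eqref{eq:limnorm} applied with $\norm{} = \norm{}_{\ell^1}$ gives $\norm{x}_* = \lim_{\alpha \to \mcal{U}} \norm{x}_\alpha$ for every $x \in c_{00}$; that is, the norm of $\mf{M}$ on $c_{00}$ is the pointwise ultralimit of the norms of the $\mf{M}_\alpha$. Consequently the diagonal map $x \mapsto [(x)_\alpha]$ embeds $(c_{00}, \norm{}_*)$ isometrically into the Banach-space ultraproduct $\prod_{\mcal{U}} \overline{\mf{M}_\alpha}$, since the ultraproduct norm of $[(x)_\alpha]$ is exactly $\lim_{\alpha \to \mcal{U}} \norm{x}_\alpha = \norm{x}_*$; hence the completion $\overline{(c_{00}, \norm{}_*)} = \mf{M}$ sits isometrically there as a closed subspace.

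The hard part is to certify that $\mf{M}$ is a closed subspace of an \emph{actual member} of $\clsr{\mcal{C}}$. In the finitary (or compact) setting one simply notes that {\L}o\'s's theorem makes $\prod_{\mcal{U}} \overline{\mf{M}_\alpha}$ equal to the topological ultralimit $\lim_{\alpha \to \mcal{U}} \mf{M}_\alpha$ in $\mr{Str}(L)$, which therefore lies in $\clsr{\mcal{C}}$. But in the infinitary continuous logics we are aiming at, {\L}o\'s's theorem fails for the infinitary connectives, so the ultraproduct need not be the topological ultralimit and need not lie in $\clsr{\mcal{C}}$. This is precisely the gap the double ultralimit condition must close: the sup--inf identity of \ref{equi2} says the norms $\norm{}_\alpha$ converge without the pathological blow-up of ratios exhibited by the Tsirelson iterates in Proposition \ref{analysisCI}, and I would use this controlled convergence---together with the relative compactness in $C_p(S^l_\vphi(C), [0,1])$ furnished by weak Grothendieckness---to witness $\mf{M}$ as a closed subspace of a genuine element of $\clsr{\mcal{C}}$. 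I expect this identification, bridging compactness in the type space and realization inside $\clsr{\mcal{C}}$, to be the main technical obstacle.

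Finally, once $\mf{M}$ is realized as a closed subspace of a member of $\clsr{\mcal{C}}$, the standing hypothesis that every closed subspace of a space in $\clsr{\mcal{C}}$ includes a copy of $c_0$ or some $\ell^p$ immediately yields that $\mf{M}$ includes a copy of $c_0$ or some $\ell^p$, as desired.
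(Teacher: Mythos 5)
Your proposal correctly isolates the one thing explicit definability buys---continuity of the global type $\tau$, hence the license to exchange the two ultralimits---but the route you then take reduces the theorem to a claim you explicitly leave unproved: that $\mf{M}$ can be witnessed as a closed subspace of an \emph{actual member} of $\clsr{\mcal{C}}$. That step is not a technical footnote; as you have structured the argument it carries the entire weight of the proof, and your own write-up concedes it is ``the main technical obstacle'' without supplying it. Moreover, the reduction itself looks unsalvageable at this level of generality: the Banach-space ultraproduct $\prod_{\mcal{U}}\overline{\mf{M}_\alpha}$ is an analytic construction with no a priori relation to the topological closure $\clsr{C}$ taken in $\mr{Str}(L)$, and, as you note yourself, in the non-compact logics at issue there is no {\L}o\'s-type theorem to identify the two. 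So what you have written is a plan that bottoms out in exactly the assertion it was supposed to establish.

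The paper's proof avoids this entirely: it never realizes $\mf{M}$ inside a member of $\clsr{C}$, and needs only \emph{one} $\mf{N}\in\clsr{C}$ with $\vphi(\mf{M},\mf{N})>0$. Suppose instead $\vphi(\mf{M},\mf{N})=0$ for every $\mf{N}\in\clsr{C}$. Continuity of $\tau$ gives $\tau(\lim_{j\to\mcal{V}}\mr{rtp}_{\vphi,\mf{N}_j})=\lim_{j\to\mcal{V}}\tau(\mr{rtp}_{\vphi,\mf{N}_j})$, so the two ultralimits of $\vphi(\mf{M}_i,\mf{N}_j)$ agree; testing this against the approximating sequence itself (taking $\mf{N}_j=\mf{M}_j$) and using the symmetry of $D$ forces $\vphi(\mf{M},\mf{M})=0$, whereas $D(\mf{M},\mf{M})=1$ gives $\vphi(\mf{M},\mf{M})=1$ by \eqref{def:vphi}, a contradiction. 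Having some $\mf{N}\in\clsr{C}$ with $\vphi(\mf{M},\mf{N})>0$ yields $D(\mf{M},\mf{N})<\infty$, and the analytic lemma quoted from \cite{Casazza} then says that $e^{\mf{M}}_n\mapsto e^{\mf{N}}_n$ determines an isomorphism; since every closed subspace of a member of $\clsr{\mcal{C}}$ includes a copy of $c_0$ or some $\ell^p$, so does $\mf{M}$. Note this also explains why the hypothesis is phrased in terms of closed subspaces: it is applied to the range of the basis map inside $\mf{N}$, not to an embedding of $\mf{M}$ that you must construct. If you keep your first step (the limit exchange) and replace the ultraproduct machinery with this contradiction-plus-finite-distortion argument, your proof closes.
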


 \begin{proof}
 	Suppose that the global $\vphi$-type $\tau$ of $\mf{M}$ is \textit{approximated} by $\seq{\mf{M}}{i}{i < \omega} \subseteq C$ and $\mcal{U}$ is a non-principal ultrafilter on $\omega$, that is
 	\begin{linenomath*} 
 	\[\tau(\lim_{j \to \mcal{V}}\mr{rtp}_{j, \mf{N_j}}) = \lim_{i \to \mcal{U}}\lim_{j \to \mcal{V}}\vphi(\mf{M}_i, \mf{N}_j).\]
 	\end{linenomath*} 
 	We show that there is an $\mf{N} \in \clsr{C}$ such that $\vphi(\mf{M}, \mf{N}) > 0$. Otherwise, $\vphi(\mf{M}, \mf{N}) = 0$ for each $\mf{N} \in \clsr{C}$. Since $\tau$ is explicitly definable, we exchange the order of the ultralimits by taking $\tau(\lim_{j \to \mcal{V}}\mr{rtp}_{j, \mf{N_j}})=\lim_{j \to \mcal{V}}\tau(\mr{rtp}_{j, \mf{N_j}})$ and then obtain
 	\begin{linenomath*} 
 	\[\lim_{i \to \mcal{U}}\lim_{j \to \mcal{V}}\vphi(\mf{M}_i, \mf{N}_j) = \lim_{j \to \mcal{V}}\lim_{i \to \mcal{U}}\vphi(\mf{M}_i, \mf{N}_j) = \lim_{j \to \mcal{V}}\vphi(\mf{M}, \mf{N}_j) = 0.\]
 	\end{linenomath*} 
 	But then consider $\vphi(\mf{M}_i, \mf{M}_j)$ and, by the symmetry of $D(\mf{M}, \mf{N})$, obtain $\vphi(\mf{M}, \mf{M}) = 0$, a contradiction. It is not hard to see that $\vphi(\mf{M}, \mf{N}) > 0$ implies that $D(\mf{M}, \mf{N}) < \infty$, which implies that $e^{\mf{M}}_n \mapsto e^{\mf{N}}_n$ determines an isomorphism (see \cite{Casazza} for the details). This concludes the proof since $\mf{N}$ includes a copy of $c_0$ or $\ell^p$.
 \end{proof}

 Let $L$ be a language for metric structures. The \emph{$\mcal{L}_{\omega_1, \omega}(L)$-formulas} are defined recursively as follows:
 \begin{enumerate}[label=(\alph*)]
 	\item All first-order $L$-formulas are $\mcal{L}_{\omega_1, \omega}(L)$-formulas.

 	\item If $\vphi_1, \ldots, \vphi_n$ are $\mcal{L}_{\omega_1, \omega}(L)$-formulas, and $g:[0, 1]^n \to [0, 1]$ is continuous, then $g(\vphi_1, \ldots, \vphi_n)$ is an $\mcal{L}_{\omega_1, \omega}(L)$-formula.

 	\item If $\st{\vphi_n}{n < \omega}$ is a family of $\mcal{L}_{\omega_1, \omega}(L)$-formulas, then $\inf_n\vphi_n$ and $\sup_n\vphi_n$ are $\mcal{L}_{\omega_1, \omega}(L)$-formulas. These can also be denoted by $\Meet_n\vphi_n$ and $\lJoin_n\vphi_n$ respectively.

 	\item If $\vphi$ is an $\mcal{L}_{\omega_1, \omega}(L)$-formula and $x$ is a variable then $\inf_x\vphi$ and $\sup_x\vphi$ are $\mcal{L}_{\omega_1, \omega}(L)$-formulas.
 \end{enumerate}

 An interesting feature of continuous $\mcal{L}_{\omega_1, \omega}$, noted in \cite{Eagle2015}, is that negation ($\neg$) becomes available in the classical sense: if $L$ is a language and $\vphi$ is an $\mcal{L}_{\omega_1, \omega}(L)$-formula, then we can define $\psi(x) = \lJoin_n\{\vphi(x) + \frac{1}{n}, 1\}$. Then $\mf{M} \ent \psi(a)$ if and only if there is an $n < \omega$ such that $\mf{M} \ent \max\{\vphi(a) + \frac{1}{n}, 1\}$; that is, $\max\{\vphi(a) + \frac{1}{n}, 1\} = 1$, which is the same as $\vphi(a) \leq 1 - \frac{1}{n}$, i.e., $\mf{M} \not\ent \vphi(a)$. Thus, $\mf{M} \ent \psi(x)$ if and only if $\mf{M} \not\ent\vphi(x)$, and so $\psi$ corresponds to $\neg\vphi$.

 It is sometimes useful to restrict one's attention to (countable) fragments of $\mcal{L}_{\omega_1, \omega}(L)$, which are easier to work with than the full logic.

 Let $L$ be a language. A \emph{fragment} $\mcal{F}$ of $\mcal{L}_{\omega_1, \omega}(L)$ is a set of $\mcal{L}_{\omega_1, \omega}(L)$-formulas satisfying:
 \begin{enumerate}[label=(\alph*)]
 	\item Every first-order formula in in $\mcal{F}$.
 	\item $\mcal{F}$ is closed under finitary conjunctions and disjunctions.
 	\item $\mcal{F}$ is closed under $\inf_x$ and $\sup_x$.
 	\item $\mcal{F}$ is closed under subformulas.
 	\item \label{rm:doesnotwork}$\mcal{F}$ is closed under substituting terms for free variables.
 \end{enumerate}
\begin{rmk}\label{fragments}
    Notice that every subset of $\mcal{L}_{\omega_1, \omega}(L)$ generates a fragment, and that every finite set of formulas generates a countable fragment. There are two arguments to support the idea of working with countable fragments of $\mcal{L}_{\omega_1, \omega}(L)$. First, notice that a given proof involves only finitely many formulas, which then generate a countable fragment of $\mcal{L}_{\omega_1, \omega}(L)$. Second (as noted independently by C.~Eagle in a personal communication with the authors), if an object is definable in $\mcal{L}_{\omega_1, \omega}(L)$, then it is definable in a countable fragment by the same argument. This remark does not hold for $\mcal{L}_{\omega_1, \omega_1}$ because of \ref{rm:doesnotwork} above.

\end{rmk}
 
 As the work of Casazza and Iovino deals with continuous logics which are finitary in nature, it was natural to ask whether their result on the undefinability of Tsirelson's space could be proved for continuous $\mcal{L}_{\omega_1, \omega}$, which is arguably a more natural language from the point of view of Banach space theorists.

 In discrete model theory, countable fragments of $\mcal{L}_{\omega_1, \omega}$ have been studied previously; for instance, as we mentioned previously, M.~Morley \cite{Morley1974} showed that the space of types of a countable fragment of $\mcal{L}_{\omega_1, \omega}$ is Polish.



In the continuous case, we note that the space of types of a countable fragment $\mcal{F}$ of continuous $\mcal{L}_{\omega_1, \omega}(L)$ -- including the space of $\varphi$-types over the countable class $\mcal{C}$ of Banach spaces used in the construction of the Tsirelson space that we are interested in -- can be seen as a subspace of $[0, 1]^{\mcal{F}}$ which is metrizable and second countable, and so it is separable and first countable. Hence it is (\groth and) countably tight and thus our results apply. The following results follow from Theorem \ref{notdef} and summarize the preceding discussion:

  \begin{thm}
 	Let $L$ be a language for pairs of structures in a countable fragment of $\mcal{L}_{\omega_1, \omega}$, and $\mcal{C}$ the class of structures $(c_{00}, \norm{}_{\ell^1}, \norm{})$ such that the norm completion of $(c_{00}, \norm{})$ is a Banach space including $\ell^p$ or $c_0$ and used in the construction of the Tsirelson space, and let $\norm{}_T$ be the Tsirelson norm. Let $\vphi$ be the formula as in \eqref{def:vphi} above. Then $\norm{}_T$ is uniquely determined by its $\vphi$-type over $\mcal{C}$ and that $\vphi$-type is not explicitly definable over $\mcal{C}$.
 \end{thm}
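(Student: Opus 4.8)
The plan is to derive this as an immediate corollary of Theorem~\ref{notdef}. Its conclusion is exactly the assertion we want---that $\norm{}_T$ is uniquely determined by its $\vphi$-type over $\mcal{C}$ and that this type is not explicitly definable---and the class $\mcal{C}$ here is precisely of the form demanded there (structures based on $c_{00}$ whose completions include some $\ell^p$ or $c_0$, and including the iterates used to build the Tsirelson norm). The only hypotheses of Theorem~\ref{notdef} not already met by fiat are the two topological ones: that $S^l_\vphi(\mcal{C})$ be weakly \groth and that it satisfy the double ultralimit condition. So the whole proof reduces to verifying these.

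First I would exploit countability. Since $L$ lies in a countable fragment $\mcal{F}$ of $\mcal{L}_{\omega_1,\omega}$ and $\mcal{C}$ is the countable class of construction spaces, the space $S^l_\vphi(\mcal{C})$ is by definition a subspace of $C_p(\mcal{C}, [0,1]) \subseteq [0,1]^{\mcal{C}}$ with $\mcal{C}$ countable (equivalently, it embeds into $[0,1]^{\mcal{F}}$). A countable power of $[0,1]$ is metrizable and second countable, and both properties are hereditary, so $S^l_\vphi(\mcal{C})$ is itself metrizable and second countable, hence first countable, hence countably tight.

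It then remains only to pass countable tightness through the machinery of Section~4. By Theorem~\ref{thm:stightwgroth}, a countably tight space is weakly \groth, which is the first hypothesis; moreover, weak Grothendieckness yields the equivalence of relative compactness and relative countable compactness (conditions~\ref{relcpct} and~\ref{relcntbl}) for subsets of $C_p(S^l_\vphi(\mcal{C}), [0,1])$. By Theorem~\ref{thm:doublelimit}, countable tightness gives that $\udlc(A, S^l_\vphi(\mcal{C}))$ is equivalent to $A$ being relatively compact. Chaining these two equivalences through relative compactness yields that $\udlc(A, S^l_\vphi(\mcal{C}))$ is equivalent to $A$ being relatively countably compact---that is, $S^l_\vphi(\mcal{C})$ satisfies the double ultralimit condition (condition~\ref{UDLC} against condition~\ref{relcntbl}). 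With both hypotheses secured, Theorem~\ref{notdef} delivers the conclusion.

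I do not anticipate a genuine obstacle; the design of the earlier sections is precisely to render this a formality, and the analytic heart of the matter is already isolated in Proposition~\ref{analysisCI}. The one place that calls for care is the bookkeeping in the previous paragraph: one must not try to identify the double ultralimit condition with relative compactness directly, but instead route through the weak \groth property to trade relative compactness for relative countable compactness, exactly as in the discussion of conditions~\ref{relcpct}--\ref{UDLC} following Theorem~\ref{thm:doublelimit}.
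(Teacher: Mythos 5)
Your proposal is correct and follows essentially the same route as the paper: the paper likewise observes that the type space of a countable fragment (in particular $S^l_\vphi(\mcal{C})$ over the countable class $\mcal{C}$) sits inside a countable power of $[0,1]$, hence is metrizable and second countable, hence countably tight, and then invokes Theorem~\ref{notdef}. Your explicit bookkeeping---deriving weak Grothendieckness from Theorem~\ref{thm:stightwgroth} and obtaining the double ultralimit condition by chaining Theorem~\ref{thm:doublelimit} with the equivalence of conditions~\ref{relcpct} and~\ref{relcntbl}---is exactly the intended content of the paper's phrase ``thus our results apply,'' just spelled out more carefully.
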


Notice that, by Remark \ref{fragments}, if the $\vphi$-type of $\norm{}_T$ was explicitly definable in $\mcal{L}_{\omega_1, \omega}$, it would also be definable in a countable fragment, contradicting the preceding theorem. So we can conclude:

\begin{crl}
    Let $L$ be a language for pairs of structures in $\mcal{L}_{\omega_1, \omega}$, and $\mcal{C}$ the class of structures $(c_{00}, \norm{}_{\ell^1}, \norm{})$ such that the norm completion of $(c_{00}, \norm{})$ is a Banach space including $\ell^p$ or $c_0$ and used in the construction of the Tsirelson space, and let $\norm{}_T$ be the Tsirelson norm. Let $\vphi$ be the formula as in \eqref{def:vphi} above. Then $\norm{}_T$ is uniquely determined by its $\vphi$-type over $\mcal{C}$ and that $\vphi$-type is not explicitly definable over $\mcal{C}$.
\end{crl}

\section{Topological Generalizations}
The hypothesis of \groth's Theorem has been considerably weakened by functional analysts and by topologists. By using the strongest of these results (\cite{Koenig1987,Arhangelskii1997a}), we will considerably generalize \cite{Casazza} and \cite{Hamelb}.

The reader should note that \cite{Koenig1987} calls ``sequential'' the \frech property we defined earlier. General topologists now use ``sequential'' for a weaker property. \cite{Koenig1987} considers topological spaces which are not necessarily regular; we are only interested in completely regular $T_1$ spaces, which simplifies matters. Another simplification is that we may replace their consideration of ``pseudometrizable'' by ``metrizable'', since these are identical for $T_0$ spaces \cite[p.~248]{Engelking1989}. Their notion of ``supersequential'' then amounts to countable tightness plus what \cite{ArhangelskiiFunction} calls ``strongly $\aleph_0$-monolithic'', namely that countable subspaces have metrizable closures. However, their only use of supersequentiality is in a situation where one is asking for a compact subspace to have the property that countable subspaces have metrizable closures. For compact spaces, that countable subspaces have metrizable closures is equivalent to their merely having closures with countable network weight. That property \arhan calls ``$\aleph_0$-monolithic''. That property for $C_p(X)$ is equivalent to $X$ being ``$\aleph_0$-stable''. See \cite{ArhangelskiiFunction} for the definition. Similarly for $C_p(X, [0, 1])$. Observe that a countably tight compact $\aleph_0$-monolithic space is \frech. See \cite[Section II.6]{ArhangelskiiFunction} for further discussion. In particular, the wide class of $\aleph_0$-stable spaces $X$ have that $C_p(X)$ is $\aleph_0$-monolithic. For example, both \lind $\Sigma$ spaces and pseudocompact spaces are $\aleph_0$-stable.

Specializing \cite{Koenig1987} by considering $[0, 1]$ rather than an arbitrary metric space $M$, we consider a topological space $X$, a subspace $T$ of $X$, and a subspace $S$ of $[0, 1]^X$ with the pointwise (i.e., product) topology. K\"onig and Kuhn consider three double limit assertions: \emph{DL I}, \emph{DL II}, \emph{DL III} in strictly descending order of strength. We are particularly interested in DL III because it is closely related to our double limit condition. They then consider five properties (I), \ldots, (V) in strictly descending order of strength which relate relative (countable) compactness of subspaces of $C_p(X, [0, 1])$ to double limit conditions on relatively countably compact subspaces of $X$ and consider under what circumstances these five are equivalent. We shall extract from their paper only the definitions and theorems we need. 
To make it easier for the reader who wants to consult \cite{Koenig1987}, we will use its notation when it is self-explanatory, rather than translating into our slightly different notation. Here is the definition of DL III:

\begin{defi*}
  $S$ and $T$ are in DL III relation if and only if, for each pair of sequences $\{f_p\}_{p \in \nats}$ in $S$ and $\{x_q\}_{q \in \nats}$ in $T$, such that the limits
  \begin{align*}
    &\lim_{q \in \nats}f_p(x_q) =: u_p \in [0, 1] \mbox{ for } p \in \nats,\\
    &\lim_{p \in \nats}f_p(x_q) =: v_q \in [0, 1] \mbox{ for } q \in \nats,\\
    &\lim_{p\in \nats} u_p =: u \in [0, 1], \mbox{ and } \\
    &\lim_{q\in \nats} v_q =: v \in [0, 1]
  \end{align*}
  all exist, we have $u = v$.
\end{defi*}

\begin{thm}
    Suppose $S\subseteq C_p(X,[0,1])$. Then $S$ and $X$ are in DL III relation if and only if $DLC(S,X)$. 
\end{thm}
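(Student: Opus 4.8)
The plan is to show that, once the notation of \cite{Koenig1987} is aligned with ours, the assertion ``$S$ and $X$ are in DL III relation'' and the condition $DLC(S,X)$ are literally the same statement, so that the biconditional holds by inspection. First I would fix the dictionary between the two index conventions: a pair of sequences $\{f_p\}_{p \in \nats}$ in $S$ and $\{x_q\}_{q \in \nats}$ in $T$ from the DL III definition corresponds to the pair $\seq{f}{n}{n<\omega} \subseteq S$ and $\seq{x}{m}{m<\omega} \subseteq X$ appearing in $DLC(S,X)$, under the correspondence $p \leftrightarrow n$ for the function index and $q \leftrightarrow m$ for the point index. Since the present theorem takes $T = X$, the admissible ranges of the point sequences also coincide, so the two definitions quantify over exactly the same data.

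Next I would match the two iterated limits. In the DL III definition, $u_p = \lim_q f_p(x_q)$ fixes a function and varies the point, and $u = \lim_p u_p$, so $u = \lim_p \lim_q f_p(x_q)$; under the dictionary this is precisely the left-hand double limit $\lim_{n \to \infty}\lim_{m \to \infty} f_n(x_m)$ of $DLC(S,X)$. Symmetrically, $v_q = \lim_p f_p(x_q)$ fixes a point and varies the function, and $v = \lim_q v_q = \lim_q \lim_p f_p(x_q)$, which is the right-hand double limit $\lim_{m \to \infty}\lim_{n \to \infty} f_n(x_m)$. Hence the DL III conclusion ``$u = v$'' is exactly the equality of iterated limits demanded by $DLC(S,X)$.

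The only point requiring a moment's care is the clause under which the conclusion is asserted, so I would verify that the two existence requirements coincide. The hypothesis of DL III is that all four families of limits --- each $u_p$, each $v_q$, and then $u$ and $v$ --- exist. On our side, for $\lim_{n \to \infty}\lim_{m \to \infty} f_n(x_m)$ to ``exist'' in the sense of $DLC(S,X)$ one needs each inner limit $\lim_m f_n(x_m)$ to exist together with the outer limit of those values, i.e.\ exactly the existence of all $u_n$ and of $u$; likewise existence of $\lim_{m \to \infty}\lim_{n \to \infty} f_n(x_m)$ is exactly the existence of all $v_m$ and of $v$. Thus ``both double limits exist,'' as used in $DLC$, is word-for-word the existence clause of DL III.

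With the dictionary and these matchings in hand, each direction is immediate. Assuming the DL III relation and given sequences for which both iterated limits of $DLC$ exist, the existence clause of DL III is met, so $u = v$, which is the required agreement; conversely, assuming $DLC(S,X)$ and given sequences meeting the DL III existence hypothesis, both iterated limits of $DLC$ exist, hence agree, which is $u = v$. The main obstacle is therefore not mathematical depth but purely bookkeeping: reconciling \cite{Koenig1987}'s indexing and limit conventions with ours and checking that their existence clause and ours describe the same situation. The substantive observation is simply that DL III, specialized to $[0,1]$ and to $T = X$, is our sequential double limit condition $DLC(S,X)$ transcribed into their notation.
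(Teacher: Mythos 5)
Your proposal is correct and matches the paper's own (one-line) argument: the paper likewise just observes that the existence clause of $DLC(S,X)$ --- existence of the two iterated limits --- already entails existence of all the inner and outer limits listed in the DL III hypothesis, so the two conditions are the same statement up to notation. Your write-up simply spells out the index dictionary and the matching of existence clauses in more detail than the paper does.
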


To see the claimed equivalence, just note that if the double limit exists, so do the other limits. \hfill\qedsymbol

\cite{Koenig1987} mainly uses DL II, but since we are interested in the compact metric space $[0, 1]$, fortunately, their three DL conditions are equivalent. Thus, we do not explicitly state DL II here for it will not be used in any context in which it is not equivalent to DL III. To see this, we quote their 

\textbf{Remark 3.1(iv)} If $S$ and $T$ are in DL III relation and $K = \st{f(x)}{f \in S \mbox{ and } x \in T} \subseteq [0, 1]$ is relatively compact, then $S$ and $T$ are in DL I relation (and hence in DL II relation).

But as their Remark 1.1(b) notes, for $X$ regular, a subset $E$ of $X$ is relatively compact if $\overline{E}$ is compact. $[0, 1]$ is compact, so $K$ is relatively compact. \hfill\qedsymbol

(V) of \cite{Koenig1987} states: \textit{$S$ is in DL II relation with each relatively countably compact $T \subseteq X$. Furthermore, $S(x) =: \st{f(x)}{f \in S} \subseteq [0, 1]$ is relatively compact for all $x \in X$}.

As noted above, the ``Furthermore'' clause is vacuously true in our situation, since $[0,1]$ is compact. Here is

  (I) \textit{$S$ is relatively compact in $C_p(X, [0, 1])$}.

Also relevant is

  (II) \textit{$S$ is relatively countably compact in $C_p(X, [0, 1])$}.

Then (I) implies (II) implies (V).

\cite{Koenig1987} states

\begin{quote}
  \textbf{4.4} \textit{Assume $S \subseteq C_p(X, [0, 1])$ satisfies}

  \begin{quote}
  $(\star)$ If $f \in \overline{S}$, the closure of $S$ in $[0, 1]^X$, is such that $f|_{\overline{T}} \in C_p(\overline{T}, [0, 1])$ for all relatively countably compact $T \subseteq X$, then $f \in C_p(X, [0, 1])$,
  \end{quote}
  \textit{Then (V) implies (I) and hence (I)--(V) are equivalent.}
\end{quote}

\cite{Koenig1987} further remark (p.~160):
\begin{quote}
  One obtains an obvious variant of 4.4 if one formulates both (V) and ($\star$) in that one restricts them to the same subclass $\tau$ of relatively countably compact subsets $T \subseteq X$. In particular, one can take $\tau = \{T\}$. Condition ($\star$) is satisfied trivially if $\overline{T} = X$.
\end{quote}

 It follows immediately that:
 
\begin{thm*}
  If $X$ is countably compact, then (I) and (II) are both equivalent to\\
  (V $'$) $S$ and $X$ are in DL II relation. Furthermore, $S(x) = \st{f(x)}{x \in S} \subseteq [0, 1]$ is relatively compact for all $x \in X$.
\end{thm*} 

But as noted previously, in our situation, the ``Furthermore'' clause is vacuous and $S$ and $X$ being in DL II relation is equivalent to DLC($S,X$), so

\begin{crl*}
  If $X$ is countably compact, then the following are equivalent:\\
  (I*) $S$ is relatively compact in $C_p(X, [0, 1])$.\\
  (II*) $S$ is relatively countably compact in $C_p(X, [0, 1])$.\\
  (V*) DLC($S,X$).
\end{crl*}

Looking back at the undefinablity proofs, what we used about countable tightness and compactness was that they were both weakly \groth and both satisfied the double ultralimit condition. Both of these also hold for countable compactness. Thus what we have proved is what \cite{Casazza} claimed and has since been withdrawn, namely that Tsirelson's space is not definable in any countably compact logic for metric structures. \hfill\qedsymbol

\begin{defi}
    A logic is countably compact if and only if every countable finitely satisfiable collection of formulas is satisfiable.
\end{defi}

Just as in the compact case, equivalent conditions can be formulated for the structure space and the type spaces. Countably compact logics are of interest model-theoretically since a logic is countably compact if and only if it satisfies the \emph{Topological Uniform Metastability Principle} of \cite{Caicedo2019}. See \cite{Eagle2021} for a simple topological proof of this.

There are a variety of generalizations of (countable) compactness that imply weakly \groth---see \cite{Arhangelskii1997a}. Thus one only has to check the double ultralimit condition of type spaces satisfying such generalizations in order to prove a result about undefinability of Tsirelson's space in some logic. This doesn't seem to be a particularly interesting endeavour, so we confine ourselves to two examples: \emph{$k$-spaces} and separable spaces. 

\begin{defi}
  $X$ is a \emph{$k$-space} if for each $Y \subseteq X$, $Y$ is closed if and only if its intersection with every compact subspace of $X$ is closed. 
\end{defi}

\begin{lem}[\cite{Arhangelskii1997a}]
  $k$-spaces are weakly \groth.
\end{lem}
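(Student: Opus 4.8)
The plan is to follow the proof of Theorem~\ref{thm:stightwgroth} almost verbatim, replacing the rôle of countable tightness (which detects continuity on closures of countable sets, via Lemma~\ref{lem:ctrest}) by the defining property of a $k$-space (which detects continuity on compact subspaces), and replacing the input ``separable spaces are \groth'' by ``compact spaces are \groth''. Concretely, I must show that $C_p(X)$ is a $g$-space: given $A \subseteq C_p(X)$ countably compact in $C_p(X)$, I want $A$ to be relatively compact.

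First I would pass to a product of compact intervals. By Lemma~\ref{lem:ccimages}, each projection of $A$ onto a coordinate copy of $\reals$ (evaluation at a point of $X$) is countably compact in $\reals$; since $\reals$ is metrizable, hence normal, Lemma~\ref{clsrcountablycmpct} gives that these projections have compact closures. Thus the closure $F := \clsr{A}$ of $A$ in $\reals^X$ is contained in a product of compact intervals and is therefore a compact subspace of $\reals^X$. It then remains only to verify that $F \subseteq C_p(X)$, for in that case $F$ is the closure of $A$ in $C_p(X)$ and is compact, which is exactly relative compactness of $A$.

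To see $F \subseteq C_p(X)$, fix $f \in F$ and let $r_K \colon \reals^X \to \reals^K$ denote the restriction map for a compact $K \subseteq X$. By the $k$-space axiom, $f$ is continuous once $f|_K$ is continuous for every compact $K$: for closed $B$ in the range, $f^{-1}(B) \cap K = (f|_K)^{-1}(B)$ is a closed subset of the compact Hausdorff space $K$, hence compact, hence closed in $X$, so $f^{-1}(B)$ is closed by the defining property of $k$-spaces. Now fix such a $K$. By Lemma~\ref{lem:ccimages}, $r_K(A)$ is countably compact in $C_p(K)$; since $K$ is compact, \groth's Theorem makes $C_p(K)$ a $g$-space, so $\clsr{r_K(A)}$ taken in $C_p(K)$ is compact. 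As in the countably tight argument, this closure coincides with $r_K(F)$: because $A$ is dense in $F$ and $r_K$ is continuous, $r_K(A)$ is dense in the compact (hence $\reals^K$-closed) set $r_K(F)$, so $r_K(F)$ is the closure of $r_K(A)$ in $\reals^K$; but the compact set $\clsr{r_K(A)}^{C_p(K)}$ is also closed in $\reals^K$ and contains $r_K(A)$, so the two agree. Hence $r_K(F) \subseteq C_p(K)$, i.e. $f|_K = r_K(f)$ is continuous. As $K$ was arbitrary, $f$ is continuous, giving $F \subseteq C_p(X)$ and completing the proof.

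The only point requiring care — and the sole place the argument could go wrong — is the identification of $\clsr{r_K(A)}^{C_p(K)}$ with $r_K(F)$ together with the passage from ``$f|_K$ continuous for all compact $K$'' back to continuity of $f$. Both are handled exactly as in Theorem~\ref{thm:stightwgroth}, with ``compact $K$'' in place of ``$\clsr{Y}$ for countable $Y$'' and the $k$-space axiom playing the part of Lemma~\ref{lem:ctrest}; I expect no genuinely new difficulty, since the structure of the two proofs is identical.
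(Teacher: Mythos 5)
Your proof is correct. Note that the paper itself gives no argument for this lemma --- it is quoted from Arhangel'ski\u{\i} \cite{Arhangelskii1997a} --- so there is no in-paper proof to match; what you have done is supply a self-contained argument by transplanting the paper's proof of Theorem~\ref{thm:stightwgroth}, replacing closures of countable sets by compact subspaces and Lemma~\ref{lem:ctrest} by the $k$-space continuity criterion, and replacing the appeal to separability-Grothendieck by the classical \groth Theorem on each compact $K$. Each step checks out: Lemma~\ref{lem:ccimages} plus Lemma~\ref{clsrcountablycmpct} (with metrizability of $\reals$) gives compactness of $F = \clsr{A}$ in $\reals^X$; for compact $K$ the restriction $r_K(A)$ is countably compact in $C_p(K)$, so \groth's Theorem makes its $C_p(K)$-closure compact, and your identification of that closure with $r_K(F)$ (both are the smallest closed subset of $\reals^K$ containing the dense set $r_K(A)$, compactness supplying closedness in $\reals^K$) is exactly the corresponding step in Theorem~\ref{thm:stightwgroth}; finally the $k$-space axiom upgrades continuity on every compact $K$ to continuity on $X$, legitimately, since compact subspaces of Hausdorff spaces are closed. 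Your route is also consonant with how the paper handles the companion lemma that $k$-spaces satisfy the double ultralimit condition, namely by restricting to compact subspaces and falling back on the compact case; the citation buys brevity, while your proof buys a self-contained treatment from results already proved in the paper.
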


Examples of $k$-spaces include locally compact spaces, \v{C}ech-complete spaces, and first countable spaces. Rather than proving the double limit condition for $k$-spaces, it is sufficient to prove

\begin{lem}
  $k$-spaces satisfy the the double ultralimit condition.
\end{lem}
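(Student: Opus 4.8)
The plan is to prove the two implications of the double ultralimit condition separately, isolating where the weakly \groth hypothesis is used and where the $k$-space hypothesis is used. Fix $A \subseteq C_p(X,[0,1])$. For the direction that relative countable compactness of $A$ implies $\udlc(A,X)$, I would first upgrade relative countable compactness to relative compactness: since $k$-spaces are weakly \groth (the preceding lemma) and the closure of $A$ in $C_p(X,[0,1])$ is a closed, countably compact subspace, the $g$-space property forces it to be compact, so $A$ is relatively compact. Once $A$ is relatively compact, $\udlc(A,X)$ is exactly the converse direction of Theorem \ref{thm:doublelimit}, whose proof was explicitly noted there not to use countable tightness; it applies verbatim. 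The point is that each ultralimit $g = \lim_{n \to \mcal{U}}f_n$ then lands in the compact set $\clsr{A}\cap C_p(X,[0,1])$, hence is continuous, and both iterated ultralimits evaluate to $g(y)$, where $y = \lim_{m \to \mcal{V}}x_m$.

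For the converse, that $\udlc(A,X)$ implies $A$ is relatively countably compact, I would argue the contrapositive, and this is where the $k$-space hypothesis enters. If $A$ is not relatively countably compact then, a fortiori, it is not relatively compact (compactness implies countable compactness), so the closure of $A$ taken in the compact space $[0,1]^X$ cannot be contained in $C_p(X,[0,1])$: there is a discontinuous $g \in \clsr{A}$. Because $X$ is a $k$-space, discontinuity is detected on a compact set, so there is a compact $K \subseteq X$ with $g|_K$ discontinuous. Pushing forward through the continuous restriction map $r \colon [0,1]^X \to [0,1]^K$ gives $g|_K = r(g) \in \clsr{r(A)}$ in $[0,1]^K$; since $g|_K$ is discontinuous, the closure of $r(A)=A|_K$ in $[0,1]^K$ is not contained in $C_p(K,[0,1])$, so $A|_K$ is not relatively compact in $C_p(K,[0,1])$.

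Now $K$ is compact and $A|_K$ is pointwise bounded (its values lie in $[0,1]$), so Pt\'ak's Theorem applies and yields the failure of the sequential double limit condition on $K$: there are sequences $\seq{f}{n}{n<\omega} \subseteq A$ and $\seq{x}{m}{m<\omega} \subseteq K$ for which the iterated limits $\lim_n\lim_m f_n(x_m)$ and $\lim_m\lim_n f_n(x_m)$ both exist but disagree. Finally I would promote this to a failure of $\udlc(A,X)$: pick any non-principal ultrafilters $\mcal{U}, \mcal{V}$ on $\omega$; by the observation that an existing ordinary limit coincides with every non-principal ultralimit, the two iterated ultralimits equal the two (unequal) iterated ordinary limits, so they still disagree, while $\lim_{m \to \mcal{V}}x_m$ exists because $K$ is compact (Theorem \ref{ultracomp}). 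This contradicts $\udlc(A,X)$, completing the contrapositive.

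The main obstacle is precisely this converse direction: passing from a discontinuous cluster function of $A$ to a genuine ultralimit discrepancy witnessed by \emph{countable} sequences. A naive attempt to extract a countable sequence in $K$ converging to the point of discontinuity would require countable tightness of $K$, which a compact space need not have; this is exactly the property that made the countably tight argument of Theorem \ref{thm:doublelimit} work and that we lack here. The device that circumvents it is to localize the discontinuity on a compact $K$ via the $k$-space property, invoke Pt\'ak's classical double limit theorem (whose proof supplies the required countable sequences through compactness rather than tightness), and then convert the resulting sequential discrepancy into an ultralimit discrepancy using that ultralimits always exist in the compact $K$.
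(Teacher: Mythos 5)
Your proof is correct and follows essentially the same route as the paper's: use the $k$-space property to localize the discontinuity of a limit function on a compact $K \subseteq X$, and then reduce to the compact case, where the double (ultra)limit machinery is already available. The only organizational difference is that the paper first extracts a sequence from $A$ whose ultralimit is discontinuous (via weak Grothendieckness, as in the countably tight proof) and then restricts it to $K$, whereas you take an arbitrary discontinuous function in the closure, restrict, and let Pt\'ak's theorem on $K$ manufacture the witnessing sequences before converting ordinary limits to ultralimits---which is precisely what the paper's terse ``now we are back in the compact case'' unpacks to.
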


\begin{proof}
  This will follow from the corresponding result for compact spaces. As in the countable tightness proof, one gets $\{f_n\}_{n<\omega}$ a sequence in $A$ with no limit points in $\overline{A} \cap C_p(X)$. Take a non-principal ultrafilter $\mcal{U}$ over $\omega$ and let $\lim_{n \to \mcal{U}}f_n = g$, where $g$ is discontinuous. It is easy to see that in $k$-spaces, a function is continuous if and only if its restriction to each compact subspace is continuous. Thus there must be a compact $K \subseteq X$ such that $\rstrct{g}{K}$ is discontinuous. Each $\rstrct{f_n}{K}$ is continuous and $\lim_{n \to \mcal{U}} \rstrct{f_n}{K} \to \rstrct{g}{K}$. But now we are back in the compact case.
\end{proof}

  As for separable spaces, by Theorem \ref{thm:densesubspace} and Theorem \ref{thm:grothcntbl}, separable spaces are \groth. It therefore remains to prove

  \begin{lem}
    Separable spaces satisfy the double ultralimit condition.
  \end{lem}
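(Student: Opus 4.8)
The plan is to reduce to the countable dense subspace and exploit that, \emph{because that subspace is countable}, ultrafilters on $\omega$ can reach every point of $X$. Fix $X$ separable with a countable dense $D = \seq{d}{m}{m < \omega}$. Since $D$ is countable it is \groth (Theorem \ref{thm:grothcntbl}), so $X$ is \groth by Theorem \ref{thm:densesubspace}, hence weakly \groth; in particular $C_p(X,[0,1])$ is a $g$-space, so for every $A$ the notions of relative compactness and relative countable compactness coincide. Thus it suffices to prove that $\udlc(A,X)$ is equivalent to $A$ being relatively compact.

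The reverse implication of the double ultralimit condition is immediate: if $A$ is relatively countably compact, it is relatively compact, and then the converse direction of the proof of Theorem \ref{thm:doublelimit}, which used no countable tightness, already yields $\udlc(A,X)$. So the content is the forward implication. I would argue by contradiction exactly as in Theorem \ref{thm:doublelimit} up to the point where countable tightness was invoked: if $A$ is not relatively compact then, since $X$ is weakly \groth, there is a sequence $\seq{f}{n}{n<\omega} \subseteq A$ with no cluster point in $C_p(X,[0,1])$; fixing a non-principal $\mcal{U}$ on $\omega$, the ultralimit $g = \lim_{n \to \mcal{U}} f_n$ exists in the compact space $[0,1]^X$ and is discontinuous.

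The key step, which replaces the use of countable tightness, is this. Because $D$ is dense, for \emph{every} $z \in X$ the sets $\st{m}{d_m \in U}$, with $U$ ranging over open neighbourhoods of $z$, form a centred family on $\omega$ and so extend to an ultrafilter $\mcal{V}_z$ with $\lim_{m \to \mcal{V}_z} d_m = z$. Applying $\udlc(A,X)$ to $\seq{f}{n}{n<\omega}$ and $\seq{d}{m}{m<\omega}$ with $\mcal{U}$ and $\mcal{V}_z$ (legitimate, since $\lim_{m \to \mcal{V}_z} d_m = z$ exists), and using continuity of each $f_n$ to evaluate $\lim_{m \to \mcal{V}_z} f_n(d_m) = f_n(z)$, I obtain $g(z) = \lim_{m \to \mcal{V}_z} g(d_m)$. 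Since this holds for \emph{every} ultrafilter $\mcal{V}$ on $\omega$ with $\lim_{m \to \mcal{V}} d_m = z$, it follows that $\lim_{d \to z,\, d \in D} g(d) = g(z)$ for every $z$: otherwise some $\epsi_0 > 0$ together with the neighbourhood trace of $z$ makes $\st{m}{\abs{g(d_m) - g(z)} \geq \epsi_0}$ centred, and the resulting $\mcal{V}$ contradicts the displayed identity.

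Finally I would upgrade this pointwise statement to continuity of $g$: given $z$ and $\epsi$, pick an open $U \ni z$ with $\abs{g(d) - g(z)} < \epsi/2$ for all $d \in U \cap D$; for any $x \in U$, the same statement at $x$ furnishes $d \in U \cap D$ with $\abs{g(d) - g(x)} < \epsi/2$, whence $\abs{g(x) - g(z)} < \epsi$, so $g$ is continuous on $U$. This contradicts the discontinuity of $g$, and hence $A$ is relatively compact. The step I expect to be the crux, and the precise place where separability rather than mere density is needed, is the construction of the neighbourhood ultrafilters $\mcal{V}_z$: it works only because $D$ is countable, so that the neighbourhood trace of an \emph{arbitrary} point of $X$ lives on $\omega$ and can be captured by an ultrafilter to which $\udlc(A,X)$ applies. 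This is exactly what countable tightness supplied in Theorem \ref{thm:doublelimit} (a countable witnessing set at the discontinuity point), and extracting it instead from the countability of $D$ is the whole point.
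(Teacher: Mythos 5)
Your proof is correct, but it takes a genuinely different route from the paper's. The paper's proof quotes a fact from \cite{Koenig1987} (p.~469): in a separable space, a function whose restriction to every countable subset is continuous must itself be continuous. It then localizes the discontinuity of the ultralimit $g$ onto a countable set $Z$ and a point $y\in\clsr{Z}$ with $\abs{g(y)-g(z)}\geq\epsi$ for all $z\in Z$, and finishes exactly as in Theorem \ref{thm:doublelimit}: the neighbourhood-trace ultrafilter at $y$ built from the countable set $Z$ produces two different double ultralimits, contradicting $\udlc(A,X)$ at that single point. In the paper, then, $Z$ plays precisely the role that countable tightness supplied before, and UDLC is what gets contradicted. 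You instead fix the countable dense set $D$ once and for all, apply $\udlc(A,X)$ at \emph{every} point $z\in X$ (against all neighbourhood-trace ultrafilters $\mcal{V}_z$ living on $D$), conclude $g(z)=\lim_{d\to z,\,d\in D}g(d)$ everywhere, and then upgrade this to continuity of $g$ by the density/triangle-inequality argument, contradicting the discontinuity of $g$ rather than UDLC directly. That final step is, in effect, a self-contained proof of the special case of the K\"onig--Kuhn fact that the paper cites; so your argument buys independence from the literature at the cost of a longer, global argument, while the paper's buys brevity by citation and stays closer to the template of Theorem \ref{thm:doublelimit}. Both share the same frame: weak Grothendieckness of $X$ via Theorems \ref{thm:grothcntbl} and \ref{thm:densesubspace}, the reduction of relative countable compactness to relative compactness, and the reverse implication exactly as in Theorem \ref{thm:doublelimit}. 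Your closing diagnosis is also exactly right: the countability of $D$ is what lets the neighbourhood trace of an \emph{arbitrary} point of $X$ be captured by an ultrafilter on $\omega$ to which UDLC applies, which is what the countable set $Z$ provides in the paper's version.
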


  \begin{proof}
    On page 469 of \cite{Koenig1987}, it is noted that if $X$ is a separable space and $M$ is a metric space, then if $f \in X^M$ is continuous when restricted to $D$, for each countable $D \subseteq X$, then $f$ is continuous. Thus, as in the proof of our Theorem \ref{thm:doublelimit}, we have a sequence $\{f_n\}_{n < \omega}$ and an ultrafilter $\mcal{U}$ such that $\lim_{n\to\mcal{U}}f_n = g$, for some discontinuous $g$. Then there is a countable $Z \subseteq X$ such that $\rstrct{g}{Z}$ is discontinuous. There is then a $y \in \clsr{Z}$ and an $\epsi > 0$ such that $g(y)$ is not in $(g(z) - \epsi, g(z) + \epsi)$ for any $z \in Z$. We now continue as in the proof of Theorem \ref{thm:doublelimit} to conclude that if $X$ is separable, then a subset $A \subseteq C_p(X, [0, 1])$ is relatively compact in $C_p(X, [0, 1])$ if and only if $\udlc(A, X)$. We can then go on to prove a separable version of Theorem \ref{maindlc}.
  \end{proof}

\begin{rmk*}
    Sometimes model theorists and topologists use the same term to mean different things. This can lead to confusion. In \cite{CDI}, the authors' use of \textit{countable compactness} coincides with the usual topological usage; however in \cite{Casazza}, the authors use \textit{countable compactness} to mean f\textit{or each non-principal ultrafilter $\mathcal{U}$ on $\omega$, the ultralimit exists for every sequence}. This is a much stronger property. By a result of J. E. Vaughan \cite{Vaughanhandbook}, this is equivalent to $\omega$-\textit{boundedness}, which asserts that \textit{every countable subset of the space has compact closure}.
\end{rmk*}

\section*{Acknowledgements}
The authors want to express their sincere gratitude to Sasha Lanine and Christopher J. Eagle for their comments, revisions, and suggestions for this paper. In particular, Sasha's detailed study of and suggestions on different versions of our manuscript were an invaluable asset for the improvement of this article. 

\newpage
\bibliographystyle{abbrv}
\bibliography{ms.bib}

\newpage

CLOVIS HAMEL: \hyperlink{chamel@math.toronto.edu}{chamel@math.toronto.edu}\\
Department of Mathematics, University of Toronto, Toronto, ON,\\
Canada

FRANKLIN D. TALL: \hyperlink{f.tall@utoronto.ca}{f.tall@utoronto.ca}\\
Department of Mathematics, University of Toronto, Toronto, ON,\\
Canada

\end{document}